\documentclass[a4paper]{amsart}

\usepackage[utf8]{inputenc}
\usepackage[top=2.5cm, bottom=2.5cm, left=2.5cm, right=2.5cm]{geometry}
\usepackage{amsthm,amssymb,amsmath}
\usepackage{array,multirow}
\usepackage{cite}
\usepackage{float}
\usepackage{comment}
\usepackage[all,cmtip]{xy}

\usepackage{xcolor}
\usepackage[colorlinks=true,citecolor=blue,plainpages=false,pdfpagelabels]{hyperref}

\usepackage{tikz}

\def\genstepset#1#2#3#4#5#6#7#8#9{%
\begin{tikzpicture}[scale=#9, baseline=(current bounding box.center)]
\foreach \x in {-1,0,1} \foreach \y in {-1,0,1} \fill(\x,\y) circle[radius=2pt];
\ifx1#1\draw[thick,->](0,0)--(0,1);\else\fi
\ifx1#2\draw[thick,->](0,0)--(1,1);\else\fi
\ifx1#3\draw[thick,->](0,0)--(1,0);\else\fi
\ifx1#4\draw[thick,->](0,0)--(1,-1);\else\fi
\ifx1#5\draw[thick,->](0,0)--(0,-1);\else\fi
\ifx1#6\draw[thick,->](0,0)--(-1,-1);\else\fi
\ifx1#7\draw[thick,->](0,0)--(-1,0);\else\fi
\ifx1#8\draw[thick,->](0,0)--(-1,1);\else\fi
\end{tikzpicture}
}
\def\stepset#1#2#3#4#5#6#7#8{\raisebox{1.5mm}{\genstepset{#1}{#2}{#3}{#4}{#5}{#6}{#7}{#8}{.2}}}
\def\smallstepset#1#2#3#4#5#6#7#8{\raisebox{1.2mm}{\genstepset{#1}{#2}{#3}{#4}{#5}{#6}{#7}{#8}{.175}}}

\def\bK{\mathbb K}
\def\cS{\mathcal S}

\def\plaincomma{,} 
\catcode`,=\active
\let,\plaincomma

\def\pFqnoargs#1#2{{}_#1F_#2}
\def\pFq#1#2#3#4#5{
  \mathchoice
      {\pFqnoargs{#1}{#2}\biggl(\begin{matrix}{\def,{\kern.707em}#3}\\{\def,{\kern.707em}#4}\end{matrix}\,\bigg|\,#5\biggr)} 
      {\pFqnoargs{#1}{#2}(#3;#4;#5)} 
      {\pFqnoargs{#1}{#2}(#3;#4;#5)} 
      {\pFqnoargs{#1}{#2}(#3;#4;#5)} 
}
\def\twoFone#1#2#3#4{\pFq21{#1,#2}{#3}{#4}}

\makeatletter
\def\timenow{\@tempcnta\time
  \@tempcntb\@tempcnta
  \divide\@tempcntb60
  \ifnum10>\@tempcntb0\fi\number\@tempcntb
  \multiply\@tempcntb60
  \advance\@tempcnta-\@tempcntb
  :\ifnum10>\@tempcnta0\fi\number\@tempcnta}
\makeatother

\theoremstyle{definition}\newtheorem{definition}{Definition}
\theoremstyle{plain}\newtheorem{lem}{Lemma}
\theoremstyle{plain}\newtheorem{thm}{Theorem}
\theoremstyle{plain}\newtheorem{conj}{Conjecture}

\newcommand{\Diag}{\operatorname{Diag}}
\newcommand{\Res}{\operatorname{Res}}
\newcommand{\pp}{\operatorname{pp}}

\newcommand{\BMnM}{Bousquet-Mélou and Mishna}

\def\Rplus{\set R_{\geq0}}
\def\Rminus{\set R_{\leq0}}

\def\odotkn{\underset{k\leq n}{\odot}}
\def\odotnk{\underset{n\geq k}{\odot}}
\def\starkn{\underset{k\leq n}{\star}}
\def\starnk{\underset{n\geq k}{\star}}

\let\set\mathbb

\begin{document}

\title[Generating Functions of Walks in the Quarter Plane]{Hypergeometric Expressions for Generating Functions of~Walks with Small Steps in the Quarter Plane}

\author{Alin Bostan}
\address{INRIA, Université Paris-Saclay, 91120 Palaiseau, France}
\email{alin.bostan@inria.fr}

\author{Frédéric Chyzak}
\address{INRIA, Université Paris-Saclay, 91120 Palaiseau, France}
\email{frederic.chyzak@inria.fr}

\author{Mark van Hoeij}
\address{Department of Mathematics, Florida State University, Tallahassee FL 32306--4510}
\email{hoeij@math.fsu.edu}
\thanks{M.~van Hoeij's work was supported by NSF grant 1319547.}

\author{Manuel Kauers}
\address{Institute for Algebra, Johannes Kepler University, A4040 Linz}
\email{manuel.kauers@jku.at}
\thanks{M.~Kauers's work was supported by FWF grants Y464-N18, F5004 and W1214.}

\author{Lucien Pech}
\email{lucien.pech@gmail.com}
\thanks{L.~Pech's work was supported in part by the Microsoft Research -- INRIA Joint Centre and by INRIA}

\date{\today\ --\ \timenow}

\begin{abstract} We study nearest-neighbors walks on the two-dimensional
square lattice, that is, models of walks on $\mathbb{Z}^2$ defined by a fixed
step set that is a subset of the non-zero vectors with coordinates 0, 1 or~$-1$.
We concern ourselves with the enumeration of such walks starting at the
origin and constrained to remain in the quarter plane $\mathbb{N}^2$, counted
by their length and by the position of their ending point. Bousquet-Mélou and
Mishna [Contemp.\ Math., pp.~1--39, Amer.\ Math.\ Soc., 2010] identified
19~models
of walks that possess a D-finite generating function; linear
differential equations have then been guessed in these cases by Bostan and
Kauers [FPSAC 2009, Discrete Math.\ Theor.\ Comput.\ Sci.\ Proc., pp.~201--215,
2009]. We give here the first proof that these equations are indeed satisfied
by the corresponding generating functions. As a first corollary, we prove that
all these 19 generating functions can be expressed in terms of Gauss'
hypergeometric functions that are intimately related to elliptic integrals. As
a second corollary, we show that all the 19 generating functions are
transcendental, and that among their $19 \times 4$ combinatorially meaningful
specializations only four are algebraic functions.
\end{abstract}

\maketitle

\smallskip\noindent{{\bf Keywords}: Walks in the quarter plane; Generating functions; Hypergeometric functions}

\smallskip\noindent{{\bf MSC2010}: Primary 05A15, 14N10, 33F10, 68W30; Secondary 33C05, 97N80, 11J89.

\section{Introduction}

\paragraph{Context.} An important problem in enumerative combinatorics is the
study of lattice walks in restricted lattices. Many efforts have been deployed
in recent years for classifying them, see e.g.\ the surveys~\cite{Humphreys10}
and~\cite{Krattenthaler15} and the references therein.
The generating functions of lattice walks
are not only intriguing for combinatorial reasons, but also from the
perspective of computer algebra. For combinatorial reasons they are
interesting because, depending on the choice of admissible steps, the
generating functions may have quite different algebraic and analytic
properties. For computational reasons they are interesting because their
descriptions (whether by a polynomial or by a linear differential equation, as
we will see below) are sometimes so large in size that it becomes difficult to
handle them with a reasonable efficiency.

In the present article, we consider \emph{small step walks restricted to the
quarter plane}, defined as follows. Let $\cS\neq \emptyset$ be a fixed subset
of~$\{-1,0,1\}^2\setminus\{(0,0)\}$, which will contain all steps allowed in
the walks. An $\cS$-walk of length $n$ starts at the origin $(0,0)$ and
consists of $n$ consecutive steps, where a step from a point $A$ to a point
$B$ is admitted if $B-A\in\cS$, and both $A$ and $B$ belong to the quarter
plane $\set N^2$. These walks are called \emph{restricted\/} to the quarter
plane because they are not allowed to step out of it, and \emph{with small
steps\/} because a single step changes the position by no more than $1$ in
each coordinate. As an example, for $\cS=\{(1,1),(-1,0),(0,-1)\}$
(\emph{Kreweras walks}), a possible walk of length six is
\[
  (0,0)\to (1,1) \to (2,2) \to (1,2) \to (2,3) \to (2,2)\to(2,1).
\]
A brute-force enumeration with rejection shows that, altogether, there are 125
different walks of length six for this particular step set. In the general
case of an arbitrary step set $\cS$, with $q_n$ denoting the number of
different $\cS$-walks of length~$n$, we are interested in the generating
function $Q(t):=\sum_{n=0}^\infty q_nt^n\in\set Q[[t]]$.

The generating function $Q(t)$ corresponding to the example step set $\cS$
above is \emph{algebraic}~\cite{kreweras65,Gessel86,BM05}, i.e., it satisfies
a polynomial equation $P(t,Q(t))=0$ for some $P\in\set Q[t,T]\setminus\{0\}$.
But this is not the case for all other step sets.
Still, among those step sets that induce
a \emph{transcendental} (i.e., non-algebraic) generating function~$Q(t)$,
some have a~$Q(t)$ that is \emph{D-finite}, i.e.,
that satisfies a linear differential equation with polynomial coefficients.
The step
set $\cS=\{(1,1),(-1,1),(0,-1)\}$ is an example for this
case~\cite{mbm02,BoPe03}. Finally, there are also step sets whose
corresponding generating function is not even D-finite; Mishna and
Rechnitzer~\cite{rechnitzer09} proved that this is the case for example when
$\cS=\{(-1,1),(1,1),(1,-1)\}$.

More generally, for a fixed step set~$\cS$, one is interested in the study of
the trivariate power series
\[
  Q(x,y;t)=\sum_{n=0}^\infty\sum_{i,j=0}^\infty q_{i,j;n}x^i y^j t^n,
\]
where $q_{i,j;n}$ denotes the number of $\cS$-walks of
length~$n$ starting at $(0,0)$ and ending at $(i,j)$. The power series
$Q(x,y;t)$ is called the \emph{complete generating function} for $\cS$-walks.
Note that the counting series~$Q(t)$ introduced before is nothing but the
specialization $Q(1,1;t)$ of $Q(x,y;t)$ at $(x,y)=(1,1)$. Other
combinatorially meaningful specializations are $Q(0,0;t)$, the generating
function of $\cS$-walks returning to the origin (also called
\emph{excursions}), $Q(1,0;t)$, the generating function of $\cS$-walks ending
on the horizontal axis, and $Q(0,1;t)$, the generating function of $\cS$-walks
ending on the vertical axis.

\BMnM~\cite{BMM} have undertaken a systematic classification of the 256 step
sets $\cS\subseteq\{-1,0,1\}^2\setminus\{(0,0)\}$, from the viewpoint of
structural properties of the generating function $Q(x,y;t)$. Again, the concerned properties are algebraicity and D-finiteness, yet applied to a multivariate setting\footnote{A trivariate power series
$S \in \set Q[[x,y,t]]$ is \emph{algebraic} if it is the root of a nonzero polynomial $P \in \set Q[x,y,t,T]$. It is called \emph{D-finite} if
the set of all partial derivatives of $S$ spans a finite-dimensional vector space over~$\mathbb{Q}(x,y,t)$.
}.
They found out that there are 79 inherently different and nontrivial models to
consider, of which they recognized 22 models for which $Q(x,y;t)$ is D-finite;
a 23rd model, of the so-called \emph{Gessel walks}, was proved to be D-finite
(and even algebraic) by Bostan and Kauers~\cite{BoKa10}. These 23 models share
the feature that a certain group associated to $\cS$ is finite. In the
remaining 56 cases, it has been proved that the groups are
infinite~\cite{BMM}, and that the complete generating functions $Q(x,y;t)$ are
not D-finite~\cite{rechnitzer09,KuRa12,BoKaSa14,MeMi14b}.

Of the 23 D-finite generating functions, 4~were recognized to be
algebraic~\cite{BMM,BoKa10}: one corresponds to the Kreweras model
$\cS=\{(1,1),(-1,0),(0,-1)\}$, one to its reverse
$\cS^{\textrm{rev}}=\{(-1,-1),(1,0),(0,1)\}$, one to $\cS \cup
\cS^{\textrm{rev}}$, and one to Gessel's.
All the other 19 generating functions~were proved to be
transcendental~\cite{FaRa10}, although some of their specializations are
algebraic, e.g., for $\cS=\{(-1,0),(0,1),(1,-1)\}$~\cite{BMM}. These 19
models form the main object of this article; they are depicted in the third
column of Table~\ref{tab:steps}.

\medskip
\begin{table}[h]
\begin{center}
\begin{tabular}{|@{\ }c@{\ }|@{\ }c@{\ }|c|c|c|}
\hline
   & OEIS \cite{OEIS} & $\cS$ & $N(x,y)$ & $S(x,y)$\\
\hline
1  & A005566 & \stepset10101010 & \multirow{4}{*}{$\displaystyle{\left(x-\bar{x}\right)\left(y-\bar{y}\right)}$} & $y+\left(x+\bar{x}\right)+\bar{y}$\\
2  & A018224 & \stepset01010101 & & $(x+\bar{x})y+(x+\bar{x})\bar{y}$\\
3  & A151312 & \stepset11011101 & & $(x+1+\bar{x})y+(x+1+\bar{x})\bar{y}$ \\
4  & A151331 & \stepset11111111 & & $(x+1+\bar{x})y+(x+\bar{x})+(x+1+\bar{x})\bar{y}$ \\
\hline
5  & A151266 & \stepset01001001 & \multirow{2}{*}{$\displaystyle{\left(x-\bar{x}\right)\left(y-\frac{1}{x+\bar{x}}\bar{y}\right)}$} & $(x+\bar{x})y+\bar{y}$ \\
6  & A151307 & \stepset01101011 & & $(x+\bar{x})y+(x+\bar{x})+\bar{y}$ \\
\hline
7  & A151291 & \stepset11001001 & \multirow{2}{*}{$\displaystyle{\left(x-\bar{x}\right)\left(y-\frac{1}{x+1+\bar{x}}\bar{y}\right)}$} & $(x+1+\bar{x})y+\bar{y}$ \\
8  & A151326 & \stepset11101011 & & $(x+1+\bar{x})y+(x+\bar{x})+\bar{y}$ \\
\hline
9  & A151302 & \stepset11010101 & \multirow{2}{*}{$\displaystyle{\left(x-\bar{x}\right)\left(y-\frac{x+\bar{x}}{x+1+\bar{x}}\bar{y}\right)}$} & $(x+1+\bar{x})y+(x+\bar{x})\bar{y}$ \\
10 & A151329 & \stepset11110111 & & $(x+1+\bar{x})y+(x+\bar{x})+(x+\bar{x})\bar{y}$ \\
\hline
11 & A151261 & \stepset10011100 & \multirow{2}{*}{$\displaystyle{\left(x-\bar{x}\right)\left(y-\left(x+1+\bar{x}\right)\bar{y}\right)}$} & $y+(x+1+\bar{x})\bar{y}$ \\
12 & A151297 & \stepset10111110 & & $y+(x+\bar{x})+(x+1+\bar{x})\bar{y}$ \\
\hline
13 & A151275 & \stepset01011101 & \multirow{2}{*}{$\displaystyle{(x-\bar{x})\left(y-\frac{x+1+\bar{x}}{x+\bar{x}}\bar{y}\right)}$} & $(x+\bar{x})y+(x+1+\bar{x})\bar{y}$ \\
14 & A151314 & \stepset01111111 & & $(x+\bar{x})y+(x+\bar{x})+(x+1+\bar{x})\bar{y}$ \\
\hline
15 & A151255 & \stepset10010100 & \multirow{2}{*}{$\displaystyle{\left(x-\bar{x}\right)\left(y-\left(x+\bar{x}\right)\bar{y}\right)}$} & $y+(x+\bar{x})\bar{y}$\\
16 & A151287 & \stepset10110110 & & $y+(x+\bar{x})+(x+\bar{x})\bar{y}$ \\
\hline
17 & A001006 & \stepset10010010 & \multirow{2}{*}{$\displaystyle{xy-\bar{x}y^2+\bar{x}^2y-\bar{x}\bar{y}+x\bar{y}^2-x^2\bar{y}}$}
& $y+\bar{x}+x\bar{y}$ \\
18 & A129400 & \stepset10111011 & & $(1+\bar{x})y+(x+\bar{x})+(1+x)\bar{y}$
\\
\hline
19 & A005558 & \stepset00110011 & $\displaystyle{xy-\bar{x}y^2+\bar{x}^3y^2-\bar{x}^3y+\bar{x}\bar{y}-x\bar{y}^2+x^3\bar{y}^2-x^3\bar{y}}$
& $\bar{x}y+(x+\bar{x})+x\bar{y}$
 \\
\hline
\end{tabular}
\caption{The 19 models ($\bar x = x^{-1}$, \ $\bar y = y^{-1}$) and the corresponding rational functions $N$ and $S$ in~Eq.~\eqref{eq:Q-as-pos-part}. Cases 1--16 are numbered as in~\cite[Table~1]{BMM}. Cases 17~and~18 are \#1~and~\#2 in~\cite[Table~2]{BMM}. Case~19 is \#1~in~\cite[Table~3]{BMM}. The OEIS tags correspond to coefficient sequences of the length generating function~$Q(t) = Q(1,1;t)$.}
\label{tab:steps}
\end{center}
\end{table}

The proofs of D-finiteness given by \BMnM\ are implicit (i.e., qualitative):
the existence of differential equations satisfied by the
generating functions $Q(x,y;t)$ was shown without obtaining the differential
equations explicitly. On the other hand, Bostan and Kauers~\cite{BK,BoKa}
provided explicit differential equations for (specializations of) the 23
D-finite generating functions, but these equations were determined only
experimentally and, in most of the transcendental cases, they still lack
formal proofs (in the 4 algebraic cases, differential equations are easily
proved, starting from algebraic equations). Therefore, the following problems
were left unsolved by \BMnM:
\begin{itemize}
	\item[(i)] prove differential equations satisfied by $Q(0,0;t)$ and $Q(t)=Q(1,1;t)$\cite[\S7.5]{BMM};
	\item[(ii)] find closed form expressions for them~\cite[\S7.3]{BMM};
	\item[(iii)] classify models with algebraic generating function $Q(t)$~\cite[\S7.3]{BMM}.
\end{itemize}

\begin{table}[t]
\centerline{%
\begin{tabular}{|cccc||cccc|}
\hline
     & $\cS$ & occurring $\pFqnoargs21$ & $w$
&    & $\cS$ & occurring $\pFqnoargs21$ & $w$ \\
\hline
   1 & \stepset10101010 & $\pFq21{\frac12,\frac12}{1}w$ & $16t^2$
& 11 & \stepset10011100 & $\pFq21{\frac12,\frac12}{1}w$ & ${\frac{16t^2}{4t^2+1}}$ \\
   2 & \stepset01010101 & $\pFq21{\frac12,\frac12}{1}w$ & $16t^2$
& 12 & \stepset10111110 & $\pFq21{\frac14,\frac34}{1}w$ & ${\frac{64t^3(2t+1)}{(8t^2-1)^2}}$ \\
   3 & \stepset11011101 & $\pFq21{\frac14,\frac34}{1}w$ & ${\frac{64t^2}{(12t^2+1)^2}}$
& 13 & \stepset01011101 & $\pFq21{\frac14,\frac34}{1}w$ & ${\frac{64t^2(t^2+1)}{(16t^2+1)^2}}$ \\
   4 & \stepset11111111 & $\pFq21{\frac12,\frac12}{1}w$ & ${\frac{16t(t+1)}{(4t+1)^2}}$
& 14 & \stepset01111111 & $\pFq21{\frac14,\frac34}{1}w$ & ${\frac{64t^2(t^2+t+1)}{(12t^2+1)^2}}$ \\
   5 & \stepset01001001 & $\pFq21{\frac14,\frac34}{1}w$ & $64t^4$
& 15 & \stepset10010100 & $\pFq21{\frac14,\frac34}{1}w$ & $64t^4$ \\
   6 & \stepset01101011 & $\pFq21{\frac14,\frac34}{1}w$ & ${\frac{64t^3(t+1)}{(1-4t^2)^2}}$
& 16 & \stepset10110110 & $\pFq21{\frac14,\frac34}{1}w$ & ${\frac{64t^3(t+1)}{(1-4t^2)^2}}$ \\
   7 & \stepset11001001 & $\pFq21{\frac12,\frac12}{1}w$ & ${\frac{16t^2}{4t^2+1}}$
& 17 & \stepset10010010 & $\pFq21{\frac13,\frac23}{1}w$ & $27t^3$ \\
   8 & \stepset11101011 & $\pFq21{\frac14,\frac34}{1}w$ & ${\frac{64t^3(2t+1)}{(8t^2-1)^2}}$
& 18 & \stepset10111011 & $\pFq21{\frac13,\frac23}{1}w$ & $27t^2(2t+1)$ \\
   9 & \stepset11010101 & $\pFq21{\frac14,\frac34}{1}w$ & ${\frac{64t^2(t^2+1)}{(16t^2+1)^2}}$
& 19 & \stepset00110011 & $\pFq21{\frac12,\frac12}{1}w$ & $16t^2$ \\
  10 & \stepset11110111 & $\pFq21{\frac14,\frac34}{1}w$ & ${\frac{64t^2(t^2+t+1)}{(12t^2+1)^2}}$
&    & & & \\
\hline
\end{tabular}
}

\caption{Hypergeometric series occurring in explicit expressions for~$Q(x,y;t)$.
The $\pFqnoargs21$ are given up to contiguity and derivation, that is,
up to integer shifts of the parameters.}
\label{tab:2F1-in-Fxyt}
\end{table}

\paragraph{Contributions.}

The original goal of the present paper was to answer question~(i).
In this regard, we rigorously prove the differential equations for $Q(0,0;t)$
and $Q(1,1;t)$ guessed by Bostan and Kauers~\cite{BK,BoKa} for the 19 models
with D-finite transcendental complete generating function. We actually do
more, that is we also find and prove differential equations for $Q(x,0;t)$ and
for $Q(0,y;t)$, that specialize to equations for
$Q(1,0;t)$ and $Q(0,1;t)$.

By solving these differential equations, we answer (ii) in the following
sense: for all the 19 models mentioned above we uniformly find closed form
expressions for $Q(x,y;t)$ in terms of Gauss' hypergeometric
series~$\pFqnoargs21$ with parameters $a,b,c \in \mathbb{Q}$, with $-c \notin \mathbb{N}$,
defined by
\begin{equation}\label{eq:2F1}
\twoFone{a}{b}{c}{t}
 = \sum_{n=0}^\infty \frac{(a)_n(b)_n}{(c)_n} \, \frac
{t^n} {n!},
\end{equation}
where $(x)_n$ denotes the Pochhammer symbol $(x)_n=x(x+1)\cdots(x+n-1)$ for $n\in\mathbb{N}$.

More precisely, we obtain the following structure result, that has been
conjectured in~\cite[\S3.2]{BK}. Note that a similar expression also appears
in a related combinatorial context~\cite{BoChHoPe11} for rook paths on a
three-dimensional chessboard.

\begin{thm} \label{thm:structure}
Let $\cS$ be one of the 19 models of small step walks in the quarter plane (see Table~\ref{tab:steps}). The complete generating function $Q(x,y;t)$ is expressible as a finite sum of iterated integrals of products of algebraic functions in $x,y,t$ and of expressions of the form $\twoFone{a}{b}{c}{w(t)}$, where $c\in\mathbb{N}$ and $w(t) \in \mathbb{Q}(t)$.
\end{thm}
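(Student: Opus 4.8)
The plan is to exploit the kernel method together with the finiteness of the group $G(\cS)$ attached to each of the 19 step sets. First I would set up the fundamental functional equation
\[
  \bigl(1 - t\,S(x,y)\bigr)\,xy\,Q(x,y;t) = xy - t\,B(x,y;t),
\]
where $S(x,y)=\sum_{(i,j)\in\cS}x^iy^j$ is the step polynomial of Table~\ref{tab:steps} and $B$ collects the boundary contributions of walks that would step out of $\set N^2$. Because $G(\cS)$ is finite, summing this equation over the orbit of $(x,y)$ with alternating signs cancels the boundary terms and yields the half-orbit-sum identity underlying Eq.~\eqref{eq:Q-as-pos-part}, expressing $xy\,Q(x,y;t)$ as the positive part $\pp_{x,y}$ of the rational function $N(x,y)/\bigl(S(x,y)(1-tS(x,y))\bigr)$. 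Specializing and extracting the relevant coefficient layers then presents $Q(0,0;t)$ as a genuine diagonal $\Diag$ of a rational function, and the sections $Q(x,0;t)$, $Q(0,y;t)$ as residue-type contour integrals.

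From these integral representations I would produce the differential equations. Writing each section as $\oint R(x,y,t)\,dy$, a $\Res$ over a small loop separating the roots of the kernel in $y$, creative telescoping in its Hermite/Griffiths--Dwork reduction form supplies a telescoper $L$ and a certificate $C$ with $L\cdot R = \partial_y C$, whence $L$ annihilates the section. I would then check that the telescopers so obtained coincide, up to a left factor, with the operators guessed by Bostan and Kauers~\cite{BK,BoKa}, thereby settling problem~(i) for $Q(0,0;t)$ and $Q(1,1;t)$ and simultaneously covering $Q(x,0;t)$ and $Q(0,y;t)$.

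The heart of the argument is to solve these operators in closed form. Each telescoper, after a suitable gauge transformation by an algebraic function, turns out to be a rational pullback of the Gauss hypergeometric operator; concretely I would run van Hoeij's algorithm for hypergeometric solutions to exhibit, for every model, parameters $(a,b,c)$ and a rational change of variable $w(t)\in\mathbb{Q}(t)$ reproducing the data of Table~\ref{tab:2F1-in-Fxyt}. The three parameter pairs $(a,b)\in\{(\tfrac12,\tfrac12),(\tfrac14,\tfrac34),(\tfrac13,\tfrac23)\}$, all with $c=1$, are exactly those for which $\twoFone{a}{b}{1}{\cdot}$ is a period of an elliptic curve, which explains the elliptic-integral nature announced in the abstract; verifying that the pullback $w(t)$ matches the local exponents and monodromy of $L$ at each singular point is what pins these down.

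Finally I would reconstruct the complete generating function from the sections. The kernel equation expresses $K(x,y;t)\,Q(x,y;t)$ as an explicit $\mathbb{Q}(x,y)$-linear combination of $Q(x,0;t)$, $Q(0,y;t)$ and algebraic terms; dividing by the kernel keeps everything within algebraic functions times the hypergeometric building blocks. Where the relation between $Q(x,y;t)$ and its sections is differential rather than purely algebraic, solving the governing operator by variation of parameters introduces one integration per order reduction, the integrands being products of algebraic functions with the $\twoFone{a}{b}{c}{w(t)}$ already found; iterating yields precisely the finite sums of iterated integrals of the statement. The main obstacle I anticipate is the third step: certifying that each guessed operator is genuinely a rational pullback of a hypergeometric operator, identifying the correct $w(t)$ together with the algebraic gauge factor, and proving that no further irreducible factors intervene, since this is where the argument leaves the purely algorithmic realm of telescoping and requires the structural theory of second-order Fuchsian equations.
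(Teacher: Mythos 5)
Your overall route coincides with the paper's: start from the Bousquet-Mélou--Mishna positive-part identity~\eqref{eq:Q-as-pos-part}, re-encode it as a residue, run creative telescoping on the sections $Q(x,0;t)$ and $Q(0,y;t)$, solve the resulting operators in terms of $\pFqnoargs21$'s, and reconstruct $Q(x,y;t)$ rationally through the kernel equation. But the step you treat as routine is precisely where the paper locates the mathematical content. You pass to ``residue-type contour integrals'' over ``a small loop separating the roots of the kernel in $y$'' and assert that a telescoping relation $L\cdot R=\partial_y C$ annihilates the section. The paper flags this passage as delicate and devotes all of Section~\ref{sec:positiveParts} to it: a rational function has many Laurent expansions, one per cone, and the iterated extraction $[x^>][y^>]$ of~\cite{BMM} acts on the nested ring $\set Q(x)[y,\bar y][[t]]$, not on a cone expansion. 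One must therefore (a)~exhibit a cone $\Gamma$ for which the direct extraction $[x^>y^>][R]_\Gamma$ agrees with theirs --- Lemma~\ref{lem:same-pos-parts}, which uses the special structure of $N$ (the denominator of $N_{-1}$ dividing $A_1$); (b)~prove that specialization at $x=\alpha$, $y=\beta$ commutes with the residue representation, which is exactly what the ``cones in opposition'' machinery and Theorem~\ref{thm:spec-ppart-as-res'} deliver; and (c)~justify the telescoping conclusion without inspecting the certificates, via Lemma~\ref{lem:resF-is-0'} and Theorem~\ref{thm:P-by-CT}. Your analytic contour version could plausibly be made rigorous, but you would still owe a proof that the contour integral equals the combinatorial series; note also that the representation used is a \emph{double} residue in $x$ and $y$, so the certificate has the shape $\partial_x U+\partial_y V$, not a single $\partial_y C$, and that the extracted function is $N/(1-tS)$, not $N/\bigl(S(1-tS)\bigr)$.

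A second, structural error: your claim that each telescoper is, after an algebraic gauge transformation, a rational pullback of the Gauss hypergeometric operator cannot be right, because the telescopers have order $3$ or $4$ (see~\eqref{eq:P} for the king walks, or the order-$4$ operator for case~18 in Section~\ref{sec:alg-or-trans}); an operator of order exceeding $2$ is not a pullback of a second-order one. What the paper establishes, via factorization~\cite{Hoeij97}, is that each operator factors into order-one factors with only the left-most factor of order $1$ or $2$, and only that left factor is solved hypergeometrically~\cite{FaHo11}. This also corrects your account of where the iterated integrals of Theorem~\ref{thm:structure} originate: the kernel equation relating $Q(x,y;t)$ to its sections is purely rational, so no variation of parameters occurs at the reconstruction stage; the integrals arise from inverting the first-order \emph{right} factors of the sections' operators (as in $L=L_2L_1$ with $L_1=\partial_t+1/t$ in the worked example). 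Finally, the paper does not certify the closed forms by matching local exponents and monodromy; since the operator provably annihilates the combinatorial series, it suffices to check that the candidate closed-form solution agrees with the series to sufficiently high order.
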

The parameters $a,b,c$ of the occurring $\pFqnoargs21$'s as well as the rational functions $w(t)$ are explicitly given in Table~\ref{tab:2F1-in-Fxyt}.
The full expressions of the generating functions $Q(0,0;t)$, $Q(0,1;t)$, $Q(1,0;t)$, $Q(1,1;t)$,
$Q(x,0;t)$, $Q(0,y;t)$ and $Q(x,y;t)$ are too large to be displayed in this
paper, and are available on-line at
\url{http://specfun.inria.fr/chyzak/ssw/closed_forms.html}.
It turns out by inspection that the involved hypergeometric functions have a
very particular form: they are intimately related to elliptic
integrals,
namely to the complete elliptic integrals of first and second kinds,
\begin{align*}
K(k) = \int_0^{\pi/2} (1-k^2 \sin^2\theta)^{-1/2} \ d\theta &=
  \frac\pi2 \pFq21{\frac12,\frac12}{1}{k^2} , \\
E(k) = \int_0^{\pi/2} (1-k^2 \sin^2\theta)^{1/2} \ d\theta &=
  \frac\pi2 \pFq21{-\frac12,\frac12}{1}{k^2} .
\end{align*}
For instance, for the step set
$\{ (1, 1), (0, 1), (-1, 1), (-1, 0), (-1, -1), (0, -1), (1, -1), (1, 0) \}$
of the so-called \textit{king walks} (case 4 in Table~\ref{tab:steps}),
we prove that
\begin{equation}\label{closed-formula-kings}
Q(t) =
\frac{1}{t} \int_0^t \frac{1}{(1+4x)^3} \cdot \twoFone{\frac32}{\frac32}{2}{\frac{16x(1+x)}{(1+4x)^2}} \, dx .
\end{equation}
See Section~\ref{sec:one-example} for a detailed presentation of this example.
Alternatively, an expression of~$Q(t)$ in terms of elliptic integrals is
\begin{equation*}
Q(t) =
  \frac1t \int_0^t
  \frac1{\pi (1+4x)^2 \sqrt{x(1+x)}} \cdot
  K'\left(\frac{4\sqrt{x(1+x)}}{1+4x}\right) \, dx .
\end{equation*}

The relationship to elliptic integrals appears to hold true
in a far more general setting.
Indeed, taking Theorem~\ref{thm:structure} as starting point,
one of us (van~Hoeij) has checked that
for \emph{many} (more than 100) integer sequences $(a_n)_{n \geq 0}$ in the OEIS
whose generating function $A(t)=\sum_{n\geq 0} a_n t^n$ is
both D-finite and convergent in a small neighborhood of $t=0$,
all second-order irreducible factors
of the minimal-order linear differential operator
annihilating~$A(t)$ are solvable
either in terms of algebraic functions,
or in terms of complete elliptic integrals.
This surprisingly general feature,
reminiscent of Dwork's conjecture mentioned in~\cite[\S3.2]{BK},
begs for a combinatorial explanation.
See also~\cite[Section~8]{HoKu16} for a similar discussion.

In Theorem~\ref{thm:structure}
and in representations of generating functions
like~\eqref{closed-formula-kings},
all ``functions'' bear a combinatorial meaning:
they have to be understood as denoting formal series at~0,
potentially with a (finite) polar part.
Correspondingly, integration has to be viewed as a linear operator
from the set of formal Laurent series without term in~$t^{-1}$
to the whole field~$\set C((t))$ of formal Laurent series.
By the natural growth of the number of walks counted by length,
all series considered can also be viewed as analytic series
that converge at least on an annulus around~0.
This alternative interpretation will be used
in Section~\ref{sec:asymptotics} only,
for asymptotic considerations.

Finally, concerning question (iii), we start from the explicit differential
equations and we exhaustively classify the algebraic cases among all the
specializations of the generating function~$Q(x,y;t)$ at points $(x,y) \in
\{(0,0),(1,0),(0,1),(1,1)\}$. As a corollary, we reprove the transcendence
of~$Q(x,y;t)$. More precisely, we prove:

\begin{thm} \label{thm:trans}
Let $\cS$ be one of the 19 models of small step walks in the quarter plane (see Table~\ref{tab:steps}), with complete generating function $Q(x,y;t)$.
For any $(\alpha,\beta) \in \{(0,0),(1,0),(0,1),(1,1)\}$, the power series $Q(\alpha,\beta;t)$ is transcendental, except in the following four cases:
\begin{itemize}
  \item Case 18 at $(\alpha,\beta)=(1,0)$ and at $(\alpha,\beta)=(0,1)$,
  \item Cases 17 and 18 at $(\alpha,\beta)=(1,1)$.
\end{itemize}
As a consequence, the power series $Q(x,y;t)$, $Q(x,0;t)$, and~$Q(0,y;t)$ are
transcendental for all the 19~models.
Additionally, the generating functions of the four algebraic cases are equal to:
\begin{itemize}
  \item
	$Q(1,1) = \frac{1}{2t^2} \left(1-t-\sqrt{(1+t)(1-3t)}\right)$ \ in case 17,
  \item
	$Q(1,1) = \frac{1}{8t^2} \left(1-2t-\sqrt{(1+2t)(1-6t)}\right)$ \ in case 18,
  \item
	$Q(1,0) = Q(0,1) = \frac{1}{32t^3} \left( (1-6t)^{3/2} (1+2t)^{1/2}-4t^2+8t-1 \right)$ \ in case 18.
\end{itemize}
\end{thm}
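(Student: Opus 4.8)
The plan is to feed the explicit differential equations for $Q(0,0;t)$, $Q(1,0;t)$, $Q(0,1;t)$ and $Q(1,1;t)$, proved in the earlier part of the paper, into a uniform algebraicity test organized by the closed forms of Theorem~\ref{thm:structure} and Table~\ref{tab:2F1-in-Fxyt}. For each of the $19\times4$ series I would determine its minimal-order annihilating operator $L_{\alpha,\beta}$ over $\set Q(t)$. The governing principle is the classical dichotomy that a D-finite power series is algebraic exactly when its minimal operator has a full basis of algebraic solutions, equivalently when its differential Galois group is finite; in particular, if a single irreducible factor of $L_{\alpha,\beta}$ carries a non-algebraic solution, then $Q(\alpha,\beta;t)$ must be transcendental. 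The argument therefore splits into verifying the four algebraic exceptions and producing a non-algebraic factor in each of the remaining $72$ cases.

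For the four exceptions I would dispense with operators and verify the stated formulas directly. Each candidate is quadratic over $\set Q(t)$: namely $\frac1{2t^2}\bigl(1-t-\sqrt{(1+t)(1-3t)}\bigr)$ for case~17 at $(1,1)$, $\frac1{8t^2}\bigl(1-2t-\sqrt{(1+2t)(1-6t)}\bigr)$ for case~18 at $(1,1)$, and $\frac1{32t^3}\bigl((1-6t)^{3/2}(1+2t)^{1/2}-4t^2+8t-1\bigr)$ for case~18 at $(1,0)=(0,1)$. For each it suffices to check that the displayed algebraic series satisfies the already-established linear differential equation for the corresponding $Q(\alpha,\beta;t)$ and matches it on enough initial Taylor coefficients; by uniqueness of power-series solutions of a linear ODE with prescribed initial data, this identifies $Q(\alpha,\beta;t)$ with an explicit algebraic function.

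For the transcendence of the remaining $72$ series I would exploit that the three base hypergeometric functions of Table~\ref{tab:2F1-in-Fxyt}, namely $\twoFone{1/2}{1/2}{1}{z}$, $\twoFone{1/4}{3/4}{1}{z}$ and $\twoFone{1/3}{2/3}{1}{z}$, are each transcendental over $\set C(z)$. Indeed all three have $c-a-b=0$, so their two local exponents at $z=1$ coincide and the function acquires a logarithmic singularity there — a feature no algebraic function can have; equivalently, these parameter triples are off Schwarz's list and the monodromy is infinite. Since precomposition with a nonconstant rational argument $w(t)\in\set Q(t)$ sends an algebraic function to an algebraic function, each $\twoFone{a}{b}{c}{w(t)}$ remains non-algebraic, and it is a solution of the second-order operator obtained by pulling back the corresponding hypergeometric operator along $w$. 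The decisive step is then to show, model by model, that this non-algebraic operator is an irreducible factor of the minimal operator $L_{\alpha,\beta}$: because the minimal operator of an algebraic series has finite differential Galois group, hence only algebraic irreducible factors, the presence of such a factor certifies the transcendence of $Q(\alpha,\beta;t)$.

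The main obstacle is exactly this last factorization bookkeeping: one must determine, for every model and every $(\alpha,\beta)\in\{(0,0),(1,0),(0,1),(1,1)\}$, whether the elliptic-type factor persists in $L_{\alpha,\beta}$ or collapses into first-order algebraic pieces — and it is only at the four listed specializations that the logarithmic contributions cancel and the surviving operator becomes solvable in algebraic functions. A robust independent certificate for the transcendental cases is singularity analysis: where the solution at the dominant singularity carries a logarithm, the Flajolet--Odlyzko transfer theorems force a $\log n$ factor into the coefficient asymptotics, which is incompatible with algebraicity and so yields transcendence directly. Finally, the asserted consequences are formal. The point $(0,0)$ never belongs to the exceptional list, so $Q(0,0;t)$ is transcendental for all $19$ models; as $Q(0,0;t)$ is the value at $x=y=0$ of each of $Q(x,y;t)$, $Q(x,0;t)$ and $Q(0,y;t)$, and as any specialization of an algebraic series is again algebraic, all three of these series are transcendental for all $19$ models.
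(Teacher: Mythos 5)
Your overall architecture coincides with the paper's (Section~\ref{sec:alg-or-trans}): start from the rigorously proved annihilators furnished by Theorem~\ref{thm:P-by-CT}, exploit the factorization $L = L'L''$ with left factor $L'$ of order 1 or~2, and settle the four algebraic cases by solving the ODE in algebraic closed form and matching initial coefficients --- exactly what the paper does, e.g.\ with the order-4 operator and the basis $s_1,\dots,s_4$ in case 18 at $(1,0)$ (your appeal to ``uniqueness with prescribed initial data'' needs the small caveat that $t=0$ is a singular point of these operators, so one matches enough coefficients to pin down the element of the finite-dimensional space of power-series solutions, as the paper does by solving a linear system in the $c_i$'s). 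For the 72 transcendence claims, however, your decisive mechanism has a genuine gap: you propose to exhibit a non-algebraic irreducible factor of the \emph{minimal} annihilator $L_{\alpha,\beta}$ of $Q(\alpha,\beta;t)$, but creative telescoping only delivers \emph{some} annihilator, and you never say how minimality --- or any substitute for it --- would be certified. This matters: a non-algebraic factor of a non-minimal annihilator proves nothing, since one may left-multiply the true minimal operator by an arbitrary operator. You yourself flag this ``factorization bookkeeping'' as the main obstacle, but leave it unresolved. The paper sidesteps minimal operators entirely: it sets $\tilde Q = L''(Q(\alpha,\beta;t))$, certifies $\tilde Q \neq 0$ from a few initial coefficients of the combinatorially computable series (one nonzero coefficient suffices), and then shows by Kovacic's algorithm (order 2) or elementary solving (order 1) that $L'$ admits no nonzero algebraic solution; since $\tilde Q$ is a nonzero solution of $L'$ and would be algebraic if $Q(\alpha,\beta;t)$ were, transcendence follows. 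Your route can be repaired by the same device --- the ``persistence'' of the elliptic factor must be witnessed on $Q$ itself via $L''(Q)\neq0$ --- and, once repaired, your a priori transcendence argument for the three base $\pFqnoargs21$'s (logarithmic singularity at $w=1$ since $c-a-b=0$, infinite monodromy, stability under nonconstant rational pullback) is a correct alternative to the paper's use of Kovacic's algorithm on~$L'$.

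Your proposed ``robust independent certificate'' via singularity analysis is, by contrast, not robust here, and the paper explicitly disclaims it: none of the $19\times4$ specializations exhibits a $\log n$ factor in its coefficient asymptotics (see Tables \ref{tab:kappa-in-F00t}--\ref{tab:kappa-in-F11t}; all are of the form $\kappa\rho^n/n^\gamma$), so the hypothesis of your log-based criterion is simply never satisfied. An asymptotic argument does prove transcendence when $\gamma$ is a positive integer, by \cite[Theorem~D]{Flajolet87} --- this is exactly how the paper remarks that all $Q(0,0;t)$, with $\gamma\in\{3,4,5\}$, could alternatively be handled --- but it is powerless for half-integer $\gamma$, e.g.\ Case~7 at $(0,1)$ ($\gamma=3/2$) and at $(1,1)$ ($\gamma=1/2$), and Case~17 at $(1,0)$ ($\gamma=5/2$), precisely the caveat stated at the end of Section~\ref{sec:alg-or-trans}. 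So this fallback cannot replace the operator-based argument in a substantial number of the transcendental cases. Your final deduction --- $(0,0)$ is never exceptional, and a specialization of an algebraic series is algebraic, whence $Q(x,y;t)$, $Q(x,0;t)$, and $Q(0,y;t)$ are transcendental for all 19 models --- agrees with the paper.
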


As an aside, starting from the explicit expressions in terms of hypergeometric
functions, we use singularity analysis and transfer theorems that are
classical in Analytic Combinatorics to get some asymptotic formulas for the
$n$th coefficient of $Q(0,0;t)$, $Q(1,0;t)$, $Q(0,1;t)$ and $Q(1,1;t)$.

\paragraph{Methodology.} Our proofs of Theorems~\ref{thm:structure}
and~\ref{thm:trans} are computer-driven and crucially rely on the use of
several modern computer algebra algorithms. The starting point is a result by
\BMnM~\cite{BMM}, stating that for the 19 models in Table~\ref{tab:steps} the
complete generating function $Q(x,y;t)$ can be expressed as the \emph{positive
part} of a certain rational function in three variables. The notion of
positive part is one of the key mathematical ingredients in what follows.
In one variable, extraction of the positive part is an operator,
denoted~$[x^>]$, which acts on formal Laurent series by cutting away all the
terms with zero or negative exponents, leaving a formal power series with no
constant term as a result. For example,
\[
  [x^>]\frac1{x^2(1-x)^2} = [x^>] \sum_{n=-2}^\infty(n+3)x^n
   = \sum_{n=1}^\infty (n+3)x^n = \frac{x(4-3x)}{(1-x)^2} .
\]
Note that interpreting the rational function as a formal Laurent series
in~$x^{-1}$ instead of~$x$ would lead to a different extraction map.  Indeed, for this
other definition of positive-part extraction, we would have
\[
 [x^>]\frac1{x^2(1-x)^2} = [x^>] \sum_{n=0}^\infty n\frac1{x^{n+3}} = [x^>] \sum_{n=-\infty}^{-3} -(n+3) x^n = 0.
\]
Things get more complicated in the multivariate setting. Using the kernel
method, \BMnM\ showed in~\cite[Prop.~8]{BMM} that the generating function
$Q(x,y;t)$, can be written in the form
\begin{equation}\label{eq:Q-as-pos-part}
  Q(x,y;t) = \frac1{x y} [x^>][y^>] \frac{N(x,y)}{1-tS(x,y)}
\end{equation}
where $N(x,y)$ and $S(x,y)$ are certain (structured) Laurent polynomials
in~$y$ with coefficients that are rational functions in~$x$. These quantities
depend on $\cS$ and are listed in Table~\ref{tab:steps}. Since there is no
unique natural way of mapping rational functions in several variables to
multivariate formal Laurent series, it is a priori not clear how the
positive-part extraction is defined in this context. Here is the intended
reading of~\eqref{eq:Q-as-pos-part}: first interpret $N(x,y)/(1-tS(x,y))$ as
an element of $\mathbb{Q}(x)[y,1/y][[t]]$, owing to particular properties of $N$
and~$S$ (see Lemma~\ref{lem:same-pos-parts} below); let $[y^>]$ act term by
term, obtaining a series in $\mathbb{Q}(x)[y][[t]]$ that can be shown to
actually belong to $\mathbb{Q}[x,1/x][y][[t]]$ for all cases in
Table~\ref{tab:steps}; then let $[x^>]$ act term by term, finally obtaining an
element of $\mathbb{Q}[x][y][[t]]$. In this reading, the composition
$[x^>][y^>]$ of positive-part operators is only applied to Laurent
polynomials, for which it is certainly well-defined, in a unique way.

As pointed out by \BMnM, Equation~\eqref{eq:Q-as-pos-part} already implies the
D-finiteness of $Q(x,y;t)$, since positive parts can be encoded as diagonals,
and diagonals of D-finite power series are again D-finite~\cite{Lipshitz88}.
This argument also implies an algorithm for computing linear
differential equations satisfied by $Q(x,y;t)$, since the D-finiteness proof
in~\cite{Lipshitz88} is effective and basically amounts to linear algebra.
Therefore, from~\eqref{eq:Q-as-pos-part} one could, in principle, determine
differential equations for~$Q(x,y;t)$.
To be more specific,
the positive part of a formal power series~$R \in \set Q[[x,y,t]]$
can be encoded as
\begin{equation}\label{eq:pp-as-hp-as-diag}
[x^>][y^>] R(x,y;t) =
  \frac{x}{1-x}\frac{y}{1-y} \odot_{x,y} R(x,y;t) =
  \Diag_{x,x'} \Diag_{y,y'} \frac{x}{1-x}\frac{y}{1-y} R(x',y';t) ,
\end{equation}
where the Hadamard product denoted~$\odot_{x,y}$ is
the term-wise product of two series,
while the diagonal operator~$\Diag_{x,x'}$ selects those terms
with equal exponents of $x$ and~$x'$.
However, the direct use of~\eqref{eq:pp-as-hp-as-diag} in our context
leads to infeasible computations;
worse, the intermediate algebraic objects involved in the calculations
would probably have too large sizes to be merely written and stored.
This is really unfortunate, since our need is
mere evaluations of the diagonals in~\eqref{eq:pp-as-hp-as-diag}
at specific values for $x$ and~$y$.

To bypass this computational obstacle, we use two ingredients.

The first one
is our main theoretical innovation: we reformulate the generating function
$Q(x,y;t)$ in terms of \emph{formal residues}.
This idea is classical
(and in fact already used in Lipshitz' proof~\cite{Lipshitz88}):
we encode diagonals as residues,
with the added advantage that early specialization
of the variables $x$ and/or~$y$ becomes possible.
Additionally, our derivation bases on a positive-part extraction
that differs from \BMnM's iterated operator~$[x^>][y^>]$:
we use a theory~\cite{AparicioMonforteKauers-2013-FLS} of series
with exponents that may be arbitrarily large in negative directions,
but are restricted to fixed cones,
together with a different, direct positive-part operator, $[x^>y^>]$, to be
defined in Section~\ref{sec:positiveParts}.
The outcome of this
reformulation is the ability to compute linear differential equations with
polynomial coefficients for the specializations $Q(x,0;t)$ and~$Q(0,y;t)$.

To perform these computations, we use a second ingredient,
\emph{creative telescoping}, an efficient algorithmic technique
for the symbolic integration of multivariate functions.
Indeed, a direct application of Lipshitz' linear-algebra algorithm
(even with specialized variables)
still leads to too large systems,
while creative telescoping succeeds in our cases of application;
see~\cite[\S2.3]{BoChHoPe11} for a related discussion.
By specialization and recombination,
the equations thus obtained
give rise to rigorously proved differential equations for
$Q(0,0;t)$, $Q(1,0;t)$, $Q(0,1;t)$ and $Q(1,1;t)$, thus answering
question~(i). The analysis of these differential equations combined with
Kovacic's algorithm~\cite{Kovacic-1986-ASS} allows to answer question~(iii)
and to prove Theorem~\ref{thm:trans}. Moreover, these differential equations
are solved in explicit terms using symbolic algorithms for ODE factorization
and ODE solving, leading to the proof of Theorem~\ref{thm:structure} and to
the answer of question~(ii).

The remarkable property that the differential equations in the $19 \times 4$
cases
could \emph{all} be solved in terms of hypergeometric functions relies on the
fact that these operators share a very peculiar factorization pattern: they
factor into factors that all have order 1 with the exception of the left-most
one that can have order 1 or 2.
The origin of this common mathematical feature deserves to be better understood.

\paragraph{Previous work.} We conclude this introduction by mentioning
previous contributions on the main topics of the article: D-finiteness,
transcendence and explicit expressions for $Q(x,y;t)$ and its specializations.

\medskip \noindent \emph{D-finiteness}. For the simplest models, the
\emph{square walk} (case 1) and the \emph{diagonal walk} (case 2),
D-finiteness is classical~(e.g.,~\cite{Arques86}). Some more involved models
have been considered sporadically: for case 19,
Gouyou-Beauchamps~\cite{gouyou86} proved bijectively that $Q(1,0;t)$ is
D-finite; for cases 5 and 15 Mishna~\cite[\S2.4.1, \S2.4.2]{Mishna09} showed
that $Q(x,y;t)$ is D-finite using the kernel method; and for case 17 she
proved~\cite[\S2.3.3]{Mishna09} that $Q(x,y;t)$ is D-finite by using a
bijection with Young tableaux of height at most~3.

Several methods have been proposed to capture D-finiteness in a uniform way.
For models with a vertical symmetry (cases 1--16), Bousquet-M\'elou and
Petkov{\v{s}}ek~\cite[\S2]{BoPe03} proved that $Q(t) = Q(1,1;t)$ is D-finite
by a combinatorial argument, and Bousquet-M\'elou~\cite[\S3]{mbm02} proved
D-finiteness of $Q(x,y;t)$ by an algebraic argument (a variation of the
{kernel method}). For cases 17--19, Gessel and Zeilberger~\cite{GeZe92} proved
D-finiteness of $Q(x,y;t)$ by using an algebraic version of the reflection
principle; their argument works more generally when the step set is left
invariant by a Weyl group and the walks are confined to a corresponding Weyl
chamber. Bousquet-M\'elou and Mishna~\cite{BMM} reproved D-finiteness of
$Q(x,y;t)$ in all 19 cases by using the kernel equation and the group of the
walk borrowed from~\cite{FaIaMa99}; their method generalizes the previous ones
from~\cite{GeZe92} and~\cite{mbm02}. Raschel~\cite{Raschel12} uses boundary
value problems to get integral representations for~$Q(x,y;t)$ that imply its
D-finiteness in all 19 cases.
By using methods from the book~\cite{FaIaMa99},
Fayolle and Raschel reproved in~\cite[Theorem~1.1]{FaRa10}
that $(x, y) \mapsto Q(x, y; t_0)$ is D-finite for each
$t_0\in(0,\#S^{-1}]$ and in all 19~cases.

\medskip \noindent \emph{Transcendence}. Algebraicity/transcendence proofs
were first considered in some isolated cases: in case~15, $Q(x,y;t)$ was
proved transcendental by Mishna~\cite[Th.~2.5]{Mishna09}; in case~17,
Mishna~\cite[\S2.3.3]{Mishna09}, then Bousquet-M\'elou and
Mishna~\cite[\S5.2]{BMM}, showed that $Q(x,y;t)$ and $Q(0,0;t)$ are
transcendental and that $Q(1,1;t)$~is algebraic;
in case~18, $Q(1,1;t)$~was proved
algebraic by Bousquet-M\'elou and Mishna~\cite[\S5.2]{BMM}; in case 19,
Bousquet-M\'elou and Mishna~\cite[\S5.3]{BMM} showed that $Q(0,0;t)$,
$Q(0,1;t)$, $Q(1,0;t)$ and $Q(1,1;t)$ are transcendental. The first unified
transcendence proof for $Q(x,y;t)$ applying to all 19 cases is by Fayolle and
Raschel~\cite[Theorem 1.1]{FaRa10}, although they attribute that result to
Bousquet-Mélou and Mishna~\cite{BMM}. They actually proved more, namely that
$Q(x, y; t_0)$ is transcendental for each~$t_0\in(0,\#S^{-1}]$, using the
approach in~\cite[Chap.~4]{FaIaMa99}.
However, this result does not provide any transcendence information
about specializations at $x,y \in \{0,1\}$.

\medskip \noindent \emph{Explicit equations and formulas}. For the simplest
models (cases 1--2), simple formulas exist, see e.g.~\cite{mbm02}; some of
them admit bijective proofs, see e.g.~\cite{GKS92}. For models 5 and 15,
Mishna~\cite[Th. 2.5 and 2.6]{Mishna09} gives explicit expressions of
$Q(x,y;t)$ in terms of some auxiliary series. For model 17, basing on earlier
work by Regev~\cite{Regev81} and using a bijection with Young tableaux of
height at most~3, Mishna~\cite[\S2.3.3]{Mishna09} shows that $q_{i,j;n}$ has a
nice hypergeometric expression and that $Q(1,1;t)$ is the (algebraic)
generating function of Motzkin numbers. In cases 17--18, Bousquet-M\'elou and
Mishna~\cite[\S5.2]{BMM} gave explicit expressions for $Q(1,1;t)$ (and more
generally for $Q(x,1/x;t)$). For model 19, it was proved by
Gouyou-Beauchamps~\cite{gouyou86} that the number of $n$-step walks ending on
the $x$-axis is a product of Catalan numbers; hypergeometric expressions for
the total number of walks are derived in~\cite[\S5.3]{BMM}, and for those
ending at an arbitrary point $(i,j)$. Bostan and Kauers~\cite{BK,BoKa}
empirically determined differential equations for $Q(1,1;t)$ and $Q(0,0;t)$
for all 19 models. For the ``highly symmetric" models (cases 1--4), Melczer
and Mishna~\cite{MeMi14} proved differential equations for $Q(1,1;t)$
conjectured by Bostan and Kauers~\cite{BK}.
Raschel~\cite{Raschel12} provides explicit integral representations of the
complete generating function using a uniform analytic approach.

\medskip Before we enter into the details, Section~\ref{sec:one-example} goes
through the whole process on one concrete example. From now on, we more simply
write $\bar x$ and~$\bar y$, respectively, for $x^{-1}$ and~$y^{-1}$.

\section{A Worked Example: King Walks in the Quarter Plane} \label{sec:one-example}

We illustrate our approach on an example. We choose the so-called \textit{king
walks} in the quarter plane (case~4 in Table~\ref{tab:steps}), with step set
$\cS = \{ (1, 1), (0, 1), (-1, 1), (-1, 0), (-1, -1), (0, -1), (1, -1), (1, 0) \}$.
The first terms of the generating function $Q(1,1;t)$ of king walks with
prescribed length and arbitrary endpoint read (see \url{http://oeis.org/A151331})
\[Q(1,1;t) = 1+3t+18t^2+105t^3+684t^4+4550t^5+31340t^6+219555t^7+1564080t^8+ \cdots,
\]
and the methods of the present article allow to obtain
the above-mentioned closed formula~\eqref{closed-formula-kings} for it.

Here are the main steps of our approach. First, the classical kernel equation~\cite[Lemma~4]{BMM} relates~$Q(x,y;t)$ to $Q(x,0;t)$, $Q(0,y;t)$, and~$Q(0,0;t)$:
\begin{equation}\label{kernel-eq-kings}
  Q(x,y;t) = 1 + t \Big( S(x,y) Q(x,y;t)
  -\bar{y}(x+1+\bar{x}) Q(x,0;t)
  -\bar{x}(y+1+\bar{y}) Q(0,y;t)
  +\bar{x}\bar{y} Q(0,0;t)\Big) ,
\end{equation}
where $S(x,y)$~is the generating polynomial of the step set:
\begin{equation*}
S(x,y) = \sum_{(i,j)\in\cS}x^iy^j = xy+y+\bar{x}y+x+\bar{x}+x\bar{y}+\bar{y}+\bar{x}\bar{y} .
\end{equation*}
A simple but important observation is that the kernel $K(x,y;t) = 1 -
tS(x,y)$ remains unchanged under the change of variables $(x,y) \leftarrow
(x,\bar{y})$, $(x,y) \leftarrow (\bar{x},{y})$ and $(x,y) \leftarrow
(\bar{x},\bar{y})$.  (Other step sets could require different changes of
variables to provide the same property.)

Applying these rational transformations to the kernel equation~\eqref{kernel-eq-kings} yields the four relations:
\begin{align*}
xyK(x,y;t)Q(x,y;t) &= \phantom{-} xy - tx(x+1+\bar{x})Q(x,0;t) - ty(y+1+\bar{y})Q(0,y;t) + tQ(0,0;t), \\
-\bar{x}yK(x,y;t)Q(\bar{x},y;t) &= - \bar{x}y + t\bar{x}(x+1+\bar{x})Q(\bar{x},0;t) + ty(y+1+\bar{y})Q(0,y;t) - tQ(0,0;t), \\
\bar{x}\bar{y}K(x,y;t)Q(\bar{x},\bar{y};t) &= \phantom{-} \bar{x}\bar{y} - t\bar{x}(x+1+\bar{x})Q(\bar{x},0;t) - t\bar{y}(y+1+\bar{y})Q(0,\bar{y};t) + tQ(0,0;t),\\
-x\bar{y}K(x,y;t)Q(x,\bar{y};t) &= - x\bar{y} + tx(x+1+\bar{x})Q(x,0;t) + t\bar{y}(y+1+\bar{y})Q(0,\bar{y};t) - tQ(0,0;t).
\end{align*}
Upon adding up these equations, all terms in the right-hand side
involving~$Q$ disappear, resulting in
\[
xyQ(x,y;t)-\bar{x}yQ(\bar{x},y;t)+\bar{x}\bar{y}Q(\bar{x},\bar{y};t)-x\bar{y}Q(x,\bar{y};t)=
{K(x,y;t)}^{-1} \left(xy-\bar{x}y+\bar{x}\bar{y}-x\bar{y}\right).
\]
Now, the main observation is that on the left-hand side, all terms except the
first one involve negative powers either of $x$ or of $y$. Therefore, extracting
positive parts expresses the generating series $xy Q(x,y;t)$ as the positive
part (w.r.t.~$x$ and $y$) of a trivariate rational function:
\begin{equation} \label{pospart-kings}
	xy Q(x,y;t) = [x^>][y^>] \left( \frac{xy-\bar{x}y+\bar{x}\bar{y}-x\bar{y}}{1-t(xy+y+y\bar{x} + \bar{x}+\bar{x}\bar{y}+\bar{y} +x\bar{y}+x)}\right).
\end{equation}

Up to this point, our reasoning is borrowed from Bousquet-M\'elou's and
Mishna's article~\cite{BMM}. Combined with Lipshitz's
result~\cite{Lipshitz88} that positive parts of D-finite functions are
D-finite, it already implies that $Q(x,y;t)$ is D-finite; in particular,
$Q(1,1;t)$ is also D-finite.  Our aim is to refine this qualitative result,
and explicitly obtain a linear differential equation satisfied by
$Q(1,1;t)$. Such a differential equation was algorithmically
\emph{guessed\/} in~\cite{BK} starting from the first terms in the power
series expansion of $Q(1,1;t)$; however the methods in~\cite{BK} do not
provide a \emph{rigorous proof\/} of the correctness of that equation.

Starting from~\eqref{pospart-kings} and following more closely Lipshitz'
encoding~\cite{Lipshitz88}, a first observation is that
$Q(x,y;t)$ is equal to the iterated diagonal
\begin{equation} \label{diag-encoding}
	 \Diag_{x_1,x_2} \Diag_{y_1,y_2} \left(\frac{x_2 y_2 (x_1y_1-\bar{x}_1y_1+\bar{x}_1\bar{y}_1-x_1\bar{y}_1)}{(1-x_2)(1-y_2)(1-t(x_1y_1+y_1+y_1\bar{x}_1 + \bar{x}_1+\bar{x}_1\bar{y}_1+\bar{y}_1 +x_1\bar{y}_1+x_1))} \right).
\end{equation}
There exist algorithms that take as input a rational function and compute a
system of partial differential equations satisfied by its
diagonal~\cite{Zeilberger90,Chyzak00,Koutschan10,Lairez16}.
However, in our case, these computations are too
difficult, and exceed by far the limits of the best existing algorithms.
The reason is that differential equations w.r.t.~$t$ and with polynomial
coefficients in $x,y,t$ are really huge, so the main limitation of algorithms
computing~\eqref{diag-encoding} already comes from the size of the output.
Another weakness of the diagonal encoding~\eqref{diag-encoding} is that it
does not provide direct access to the univariate series $Q(1,1;t)$, since
taking diagonals and specializing variables are operations that do not
commute.

To circumvent these difficulties and to make the computation feasible, our
key idea is to encode the positive part in~\eqref{pospart-kings} as a
formal residue:
\begin{equation}\label{residue-encoding-kings}
Q(\alpha,\beta;t) = \Res_{x,y} \left( \frac{xy-\bar{x}y+\bar{x}\bar{y}-x\bar{y}}{(1-\alpha x)(1-\beta y)(1-t(xy+y+y\bar{x} + \bar{x}+\bar{x}\bar{y}+\bar{y} +x\bar{y}+x))}\right).
\end{equation}
The formal proof of this encoding is delicate and is the topic of
Section~\ref{sec:positiveParts}.  The advantage of~\eqref{residue-encoding-kings} over~\eqref{diag-encoding}
is twofold. On the one hand, the residue computation can be carried out by using
a single call to the creative-telescoping algorithm for rational functions,
while the diagonal computation~\eqref{diag-encoding} has two steps, the first
for a rational function in five variables, the second for an algebraic function
in four variables. On the other hand, and more importantly, taking residues
commutes with specialization, contrarily to positive parts and diagonals.
Therefore, the generating series for walks $Q(1,1;t)$ is equal to
\begin{equation*}
Q(1,1;t) = \Res_{x,y} \left( \frac{xy-\bar{x}y+\bar{x}\bar{y}-x\bar{y}}{(1-x)(1-y)(1-t(xy+y+y\bar{x} + \bar{x}+\bar{x}\bar{y}+\bar{y} +x\bar{y}+x))}\right).
\end{equation*}

A nice property of residues is that pure derivatives in $x$ and~$y$ have zero
residue. In order to compute a linear differential equation satisfied by
$Q(1,1;t)$ it thus suffices to find $U,V \in \set Q(x,y;t)$ and a differential
operator $L$ in $\set Q(t) \langle \partial_t \rangle$ such that
\begin{equation}\label{eq:P-as-derivatives-Kings}
L \left(
\frac{xy-\bar{x}y+\bar{x}\bar{y}-x\bar{y}}{(1-x)(1-y)(1-t(xy+y+y\bar{x} +
\bar{x}+\bar{x}\bar{y}+\bar{y} +x\bar{y}+x))}\right) = \partial_x (U) +
\partial_y (V) ,
\end{equation}
where $\partial_t$, $\partial_x$, and~$\partial_y$ stand
for the derivation operators $d/dt$, $d/dx$, and~$d/dy$, respectively.
Indeed, since $L$ depends neither on~$x$ nor on~$y$, it commutes with
the extraction of the residue with respect to $x$ and~$y$. The fact that
the residue of $\partial_x (U) + \partial_y (V)$ is zero then proves that
$L(Q(1,1;t))=0$.

However, this last implication is more subtle than it looks:
Equation~\eqref{eq:P-as-derivatives-Kings} really is a relation
between rational functions;
the conclusion about~$Q(t)$ is a conclusion about series.
Owing to the ambiguity inherent to series expansions of rational functions,
it is not clear a priori that
the rational function in the left-hand side of~\eqref{eq:P-as-derivatives-Kings}
and $U$ and~$V$ in its right-hand side
can be expanded consistently
and in accordance with the combinatorial interpretation for~$Q(t)$.
Describing the right expansion is the topic of Section~\ref{sec:positiveParts}:
Theorem~\ref{thm:P-by-CT} provides the wanted implication.

Now, the existence of a non trivial $(L,U,V)$
satisfying~\eqref{eq:P-as-derivatives-Kings}
is guaranteed by the
theory of holonomic D-modules, which was first used in this context by
Zeilberger~\cite{Zeilberger90b} (see also~\cite{Cartier92}). The explicit
form of a solution $(L,U,V)$ can be determined using \emph{creative
  telescoping}~\cite{Zeilberger90,Chyzak00,Koutschan10};
a detailed description of the algorithm
is presented in~\cite[\S2.4.2]{BoChHoPe11}.

In our case, we find
\begin{multline}\label{eq:P}
  L = t^2 (1+4 t) (8 t-1) (2 t-1) (1+t) \partial_t^3+t (200 t^3+576 t^4-33 t-252 t^2+5) \partial_t^2 \\
     +4 (22 t^3-117 t^2-12 t+288 t^4+1) \partial_t +384 t^3-12-144 t-72 t^2 .
\end{multline}
Note that
this is precisely the differential operator \emph{guessed\/} in~\cite{BK}.

Moreover, factorization algorithms for linear differential
operators~\cite{Hoeij97} can be used to prove that $L = L_2 L_1$, where
$L_1 = \partial_t + 1/t$ and
\begin{multline}\label{eq:P2}
L_2 =
t^2 (1+4 t) (1 - 8 t) (1-2 t) (1+t) \partial_t^2+2 t (256 t^4+80 t^3-111 t^2-14 t+2) \partial_t\\+768 t^4+8 t^3-306 t^2-30 t+2.
\end{multline}
It follows that the Laurent power series
\begin{equation*}
f(t) = \frac{dQ}{dt}(1,1;t) +
\frac{Q(1,1;t)}{t} = t^{-1}+6+54t+420t^2+3420t^3+27300t^4+219380t^5+O(t^6)
\end{equation*}
is a solution of $L_2$.  Starting from the second order operator $L_2$,
algorithmic methods explained in~\cite[\S2.6]{BoChHoPe11} (see
also~\cite{FaHo11}) allow to express $f(t)$ in terms of a
$_2F_1$ hypergeometric function:
\[f(t) = \frac{1}{t(1+4t)^3} \cdot  \twoFone{\frac32}{\frac32}{2}{\frac{16t(1+t)}{(1+4t)^2}}.\]

Finally, solving the equation ${d}/{dt} \, Q(1,1;t) + Q(1,1;t)/t = f(t)$ yields formula~\eqref{closed-formula-kings}.

Similarly, by deriving variants of~\eqref{eq:P-as-derivatives-Kings}
parametrized by \emph{indeterminate\/} $\alpha$ and~$\beta$ we obtain the
representations
\begin{align*}
Q(\alpha,0;t)
&= \Res_{x,y} \left( \frac{xy-\bar{x}y+\bar{x}\bar{y}-x\bar{y}}{(1-\alpha x)(1-t(xy+y+y\bar{x} + \bar{x}+\bar{x}\bar{y}+\bar{y} +x\bar{y}+x))}\right) \\
\intertext{and}
Q(0,\beta;t)
&= \Res_{x,y} \left(
\frac{xy-\bar{x}y+\bar{x}\bar{y}-x\bar{y}}{(1-\beta y)(1-t(xy+y+y\bar{x} +
  \bar{x}+\bar{x}\bar{y}+\bar{y} +x\bar{y}+x))}\right) .
\end{align*}
Creative-telescoping techniques still allow the effective computation of
differential operators in $\set Q(\alpha;t)\langle\partial_t\rangle$ for $Q(\alpha,0;t)$,
resp.~in $\set Q(\beta;t)\langle\partial_t\rangle$ for $Q(0,\beta;t)$.  Owing to the
additional symbolic indeterminate, the computations are much harder than
for $Q(1,1;t)$, but still feasible. Each of the resulting differential
operators factors again, this time as a product of an order-two operator
and of three order-one operators. Moreover, the second-order operators are
again solvable in terms of $_2F_1$ functions. Finally, a closed formula for
$Q(\alpha,\beta;t)$ is obtained from the closed formulas for $Q(\alpha,0;t)$ and $Q(0,\beta;t)$
via the kernel equation~\eqref{kernel-eq-kings}. This detour is
computationally crucial, since performing creative telescoping directly on
the five-variable rational function from~\eqref{residue-encoding-kings} is
not feasible even using today's best algorithms.

\section{Computing Positive Parts as Formal Residues}\label{sec:positiveParts}

We will now put the assertions made in the previous section for one
particular case on solid algebraic grounds by clarifying to which series
domains we map our rational functions, by introducing formal notions of
residue, Hadamard product, and positive part for the objects in these
domains, and by showing that the differential equations we obtain from
creative telescoping indeed annihilate the positive parts.

To this end, we study vector spaces of “series” that are just bilateral
infinite arrays in Section~\ref{sec:inf-arrays}, before elaborating
in Section~\ref{sec:series-wrt-cone} on a theory of rings and fields of series
with exponents in cones~\cite{AparicioMonforteKauers-2013-FLS}.  We are
then able to represent positive parts as residues of suitable products in
those rings in Section~\ref{sec:pp-as-res}.  In Section~\ref{sec:spec-pp}, a notion of
cones in opposition is introduced to refine this representation and allow
the early specialization we need for efficiency in computations.  The
connection to our application is done in Section~\ref{sec:gf-as-res}, before we
justify the use of creative telescoping in Section~\ref{sec:ann-by-ct}.

As the present section is of a more general nature than the application
to small-step walks,
we develop the theory here over a general field~$\bK$ of characteristic~0.

\subsection{Linear operations on infinite arrays}
\label{sec:inf-arrays}

The set~$\bK^{\set Z^n}$ consisting of
all the series
\begin{equation}\label{eq:gen-series}
\sum_{i_1,\dots,i_n\in\set Z} a_{i_1,\dots,i_n}x_1^{i_1}\cdots x_n^{i_n}
\end{equation}
is a $\bK$-vector space.  Here, we use the word “series” and the
notation~$\sum$ just for convenience, disregarding any question of
convergence.  The set~$\bK^{\set Z^n}$ is not a ring, but we can endow it
with well-defined operations of residue (from~$\bK^{\set Z^n}$ to~$\bK$) and
positive part (from~$\bK^{\set Z^n}$ to $\bK^{\set N^n}$): for a given series~$f$, the
residue $\Res_{x_1,\dots,x_n} f$ and positive part $[x_1^>\cdots x_n^>] f$
are defined in the natural way as
\begin{equation*}
a_{-1,\dots,-1}
\qquad\text{and}\qquad
\sum_{i_1,\dots,i_n>0} a_{i_1,\dots,i_n}x_1^{i_1}\cdots x_n^{i_n},
\end{equation*}
respectively.  More generally, we may consider residue and positive-part
extractions with respect to some of the variables only, and this will be
obvious from the notation.  This way, positive-part extractions compose
nicely, e.g., $[x_1^>] [x_2^>] = [x_2^>] [x_1^>] = [x_1^> x_2^>]$.  The
vector space~$\bK^{\set Z^n}$ also possesses an internal law of Hadamard
product, defined for two series $f$ and~$g$ with respective coefficients
$a_{i_1,\dots,i_n}$ and~$b_{i_1,\dots,i_n}$ by
\begin{equation*}
f \odot g = \sum_{i_1,\dots,i_n\in\set Z}
  a_{i_1,\dots,i_n} b_{i_1,\dots,i_n} x_1^{i_1}\cdots x_n^{i_n} .
\end{equation*}
Finally, for~$k\leq n$, we will need more operations for series in
different variable sets:
\begin{itemize}
\item for $f \in \bK^{\set Z^k}$ and $g \in \bK^{\set Z^n}$, we define
\begin{equation}\label{eq:semi-hadam-k-n}
f \odotkn g =
  \sum_{i_1,\dots,i_n\in\set Z}
    a_{i_1,\dots,i_k} b_{i_1,\dots,i_n} x_1^{i_1}\cdots x_n^{i_n} ;
\end{equation}
\item for $f \in \bK^{\set Z^n}$ and $g \in \bK^{\set Z^k}$, we define
\begin{equation}\label{eq:semi-hadam-n-k}
f \odotnk g =
  \sum_{i_1,\dots,i_n\in\set Z}
    a_{i_1,\dots,i_n} b_{i_1,\dots,i_k} x_1^{i_1}\cdots x_n^{i_n} .
\end{equation}
\end{itemize}
Beware that, despite the natural embedding of~$\bK^{\set Z^k}$ into~$\bK^{\set
  Z^n}$ mapping $f \in \bK^{\set Z^k}$ to~$fx_{k+1}^0\cdots x_n^0 \in \bK^{\set
  Z^n}$, $f \odot_{k\leq n} g$~in~\eqref{eq:semi-hadam-k-n} is generally
not equal to~$(f x_{k+1}^0\cdots x_n^0)\odot g$ and similarly $f
\odot_{n\geq k} g$~in \eqref{eq:semi-hadam-n-k} is generally not equal
to~$f\odot(g x_{k+1}^0\cdots x_n^0)$.

The vector space~$\bK^{\set Z^n}$ also has a (linear-)differential structure:
it can be endowed with a derivation operator~$\partial_1$ that maps a
series~\eqref{eq:gen-series} to
\begin{equation*}
\sum_{i_1,\dots,i_n\in\set Z} i_1 a_{i_1,\dots,i_n}x_1^{i_1-1}x_2^{i_2}\cdots x_n^{i_n} ,
\end{equation*}
and with operators $\partial_2,\dots,\partial_n$ defined similarly.  The
following result is then obvious.

\begin{lem}\label{lem:res-partial}
For any series~$f \in \bK^{\set Z^n}$ and any~$j$ satisfying $1 \leq j \leq
n$, $\Res_{x_1,\dots,x_n} \partial_j f = 0$.
\end{lem}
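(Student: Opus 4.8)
The plan is to compute directly the single coefficient that the residue extracts, and to observe that differentiation forces it to vanish. First I would unwind the definitions: writing $f=\sum_{i_1,\dots,i_n\in\set Z} a_{i_1,\dots,i_n}x_1^{i_1}\cdots x_n^{i_n}$, the residue $\Res_{x_1,\dots,x_n}g$ of any series $g\in\bK^{\set Z^n}$ is by definition the coefficient of $x_1^{-1}\cdots x_n^{-1}$ in $g$. Hence it suffices to show that this one coefficient of $\partial_j f$ is zero.

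Next I would track where that coefficient comes from. By the definition of~$\partial_j$, the term $a_{i_1,\dots,i_n}x_1^{i_1}\cdots x_n^{i_n}$ of~$f$ contributes $i_j\,a_{i_1,\dots,i_n}x_1^{i_1}\cdots x_j^{i_j-1}\cdots x_n^{i_n}$ to $\partial_j f$. For this contribution to land on the monomial $x_1^{-1}\cdots x_n^{-1}$, one needs $i_\ell=-1$ for every $\ell\neq j$ and $i_j-1=-1$, that is $i_j=0$. Thus the only term of~$f$ that feeds into the sought coefficient is the one indexed by $(-1,\dots,-1,0,-1,\dots,-1)$ (with the $0$ in position~$j$), and its contribution carries the prefactor $i_j=0$.

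Therefore the coefficient of $x_1^{-1}\cdots x_n^{-1}$ in $\partial_j f$ equals $0\cdot a_{-1,\dots,-1,0,-1,\dots,-1}=0$, which is precisely the assertion $\Res_{x_1,\dots,x_n}\partial_j f=0$. There is no genuine obstacle here beyond the bookkeeping, and indeed the paper flags the lemma as obvious: the heart of the matter is the elementary fact that the monomial $x_j^{-1}$ can only be produced by $\partial_j$ out of the term $x_j^{0}$, whose derivative vanishes, so no residue can survive differentiation. Since $f$ and $j$ were arbitrary, the claim holds throughout~$\bK^{\set Z^n}$.
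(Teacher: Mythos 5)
Your proof is correct and matches the paper's intent exactly: the paper states this lemma without proof, declaring it ``obvious,'' and your computation is precisely the omitted bookkeeping. Indeed, the coefficient of $x_1^{-1}\cdots x_n^{-1}$ in $\partial_j f$ can only come from the term of~$f$ indexed by $(-1,\dots,-1,0,-1,\dots,-1)$ (with $0$ in position~$j$), and that contribution carries the prefactor $i_j=0$, so the residue vanishes.
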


Additionally, as Laurent polynomials are finite sums, $\bK^{\set Z^n}$~admits
well-defined operations of multiplication by a Laurent polynomial.
Together with closure under derivation, this makes it meaningful to
consider differential equations in series of~$\bK^{\set Z^n}$.

\subsection{Series expansions with respect to a cone}
\label{sec:series-wrt-cone}

In what follows, we introduce a family of rings of series with support
restricted to cones of~$\set Z^n$.  This extends to rings of series whose
support enjoys a certain compatibility condition with a fixed monomial
ordering.  This discussion is mostly borrowed
from~\cite{AparicioMonforteKauers-2013-FLS}.  See the introduction in that
article for the comparison to the theory of Hahn series (later developed by
Mal'cev, Neumann, and recently Xin) and to the theory of MacDonald series
(recently developed by Aroca, Cano, and Jung).

In order to highlight the names and number of indeterminates in the
notation for various rings and fields of polynomials, series, etc, in what
follows, we will compactly write $U_m$ to denote $m$~indeterminates
$u_1,\dots,u_m$ in notations like $\bK[U_m]$, $\bK(U_m)$, $\bK[[U_m]]$, etc.
To refer to only the last $m-p$ indeterminates $u_{p+1},\dots,u_m$ of~$U_m$,
we will write~$U_{m\setminus p}$.  We will use similar notation for
inverses and combinations of names of indeterminates, e.g.,
$\bK[X_n,Y_{n\setminus k},Y_k^{-1}]$ denotes
$\bK[x_1,\dots,x_n,y_{k+1},\dots,y_n,y_1^{-1},\dots,y_k^{-1}]$.

\begin{definition}
A topologically closed set $C\subseteq\set R^n$ is called a \emph{cone\/}
if for all $u,v\in C$ and $\lambda,\mu\geq0$, $\lambda u+\mu v\in C$.  A
cone~$C$ is called \emph{line-free\/} if $u,-u\in C$ implies $u=0$.
\end{definition}

If $C$ is a line-free cone, then the set consisting of all the series of
the form~\eqref{eq:gen-series} with $a_{i_1,\dots,i_n}=0$ whenever
$(i_1,\dots,i_n)\not\in C$ forms a ring together with the natural addition
and multiplication.  We denote it by $\bK_C[[x_1,\dots,x_n]]$ or~$\bK_C[[X_n]]$.
It is a $\bK$-algebra.  For the trivial cone~$C=\{0\}$, this is simply~$\bK$.
The set $\bK_C\langle\langle X_n\rangle\rangle :=
\bigcup_{(i_1,\dots,i_n)\in\set Z^n} x_1^{i_1}\cdots x_n^{i_n} \bK_C[[X_n]]$,
forms another ring, in
fact just $\bK_C[[X_n]][X_n,X_n^{-1}]$.  Its elements are series whose
supports are contained in a finite union of translated copies
$(i_1,\dots,i_n)+C$ of~$C$. For the trivial cone~$C=\{0\}$, this is simply
the ring of Laurent polynomials, $\bK[X_n,X_n^{-1}]$.  The ring
$\bK_C\langle\langle X_n\rangle\rangle$ inherits the operations of residue
and positive part, the internal law of Hadamard product, and the derivation
operations.  Observe as well that the Hadamard product $f_1\odot f_2$ of
series $f_1 \in \bK_{C_1}\langle\langle X_n\rangle\rangle$ and $f_2 \in
\bK_{C_2}\langle\langle X_n\rangle\rangle$ for different $C_1$ and~$C_2$
belongs to both rings, because the support of $f\odot g$ is the
intersection of the supports of $f$ and~$g$.

Observe that rational functions from $\bK(x_1,\dots,x_n)$, henceforth
denoted~$\bK(X_n)$, can often be expanded as elements of different rings
$\bK_{C_1}\langle\langle X_n\rangle\rangle$ and $\bK_{C_2}\langle\langle
X_n\rangle\rangle$, potentially with trivial intersection~$C_1 \cap C_2 =
\{0\}$.  This should not be understood as implying that the rational
function is part of the intersection of both rings.  In fact, we should
refrain from viewing rational functions as elements of series rings: they
only have images in those rings.  As a consequence, residue and
positive-part extractions of a rational function only make sense with
respect to a given cone expansion.  On the other hand, given two line-free
cones $C\subseteq C'$, we can safely identify $\bK_C[[X_n]]$ as a subring of
$\bK_{C'}[[X_n]]$, each given series having the same coefficient family with
regard to both expansions.  The situation is the same with
$\bK_C\langle\langle X_n\rangle\rangle$ that we identify as a subring of
$\bK_{C'}\langle\langle X_n\rangle\rangle$.  Similar identifications occur
when extending the variable set: for instance, we will freely identify
$\bK_C[[X_n]]$ and $\bK_{C\times\{0\}^m}[[X_{n+m}]]$.

To avoid blind identification of a rational function with a series, as the
latter must depend on the cone used, we introduce the following notation:
for a rational function~$P/Q \in \bK(X_n)$ which admits an expansion with
respect to a line-free cone~$C$, we write~$[P/Q]_C$ for this expansion in
$\bK_C\langle\langle X_n\rangle\rangle$.  As a consequence
of~\cite[Thm~12]{AparicioMonforteKauers-2013-FLS}, such an expansion exists
if and only if $Q$~can be factored in the form $x_1^{i_1}\cdots x_n^{i_n}
\tilde Q$ for $\tilde Q \in \bK_C[[X_n]]$ with non-zero constant term.

There will be situations where we need to know the existence of a cone~$C$
with respect to which a certain rational function can be expanded without
having to know the exact cone~$C$.  This will be the case
in Section~\ref{sec:ann-by-ct} below, when we link creative telescoping to series
expansions: we will want to know that certificates of creative telescoping
can be expanded over some cone~$C$ without having to observe them to be
able to make the cone~$C$ explicit.  To this end, we borrow
from~\cite{AparicioMonforteKauers-2013-FLS} the definition of more rings
and fields, defined with respect to a monomial order.

We start with a fixed monomial order~$\preccurlyeq$ on the monomials
in~$x_1,\dots,x_n$ (with exponents in~$\set Z$), that is, with a total
order that is monotonic with respect to product: for all $a$, $b$, and~$c$,
$a \preccurlyeq b$ implies $ac \preccurlyeq bc$.

\begin{definition}
A cone~$C$ is \emph{compatible\/} with the monomial order~$\preccurlyeq$ if
the apex of~$C$ coincides with its minimal element with respect
to~$\preccurlyeq$.
\end{definition}

Given~$\preccurlyeq$, the union over all cones~$C$ compatible
with~$\preccurlyeq$ of the rings $\bK_C[[X_n]]$ is a ring, denoted
$\bK_\preccurlyeq[[X_n]]$ in what follows.  The set $\bK_\preccurlyeq((X_n)) :=
\bigcup_{(i_1,\dots,i_n)\in\set Z^n} x_1^{i_1}\cdots x_n^{i_n}
\bK_\preccurlyeq[[X_n]]$ now forms another ring.  This ring is even a
field~\cite[Thm~15]{AparicioMonforteKauers-2013-FLS}.  Now, for a fixed
cone~$C$ compatible with the monomial order~$\preccurlyeq$, the ring
$\bK_C[[X_n]]$ is a subring of both $\bK_C\langle\langle X_n\rangle\rangle$ and
$\bK_\preccurlyeq[[X_n]]$, and both of these rings are subrings of the field
$\bK_\preccurlyeq((X_n))$. All those inclusions are canonical, in the sense that they preserve
the coefficients.
In particular, a rational function~$F$ of the field~$\bK(X_n)$ maps to a
well-defined series of~$\bK_\preccurlyeq((X_n))$, which we will
denote~$[F]_\preccurlyeq$.  When $C$~is compatible with~$\preccurlyeq$, we
will do the identification $[F]_C = [F]_\preccurlyeq$.  As a consequence,
the field $[\bK(X_n)]_\preccurlyeq=\{[F]_\preccurlyeq:F\in \bK(X_n)\}$ is a subfield of $\bK_\preccurlyeq((X_n))$.

\subsection{Positive parts as residues}
\label{sec:pp-as-res}

For a given series $f\in \bK_C\langle\langle X_n\rangle\rangle$, the positive
part $[x_1^>\cdots x_n^>] f$ can be obtained by taking the Hadamard product
with the expansion as a geometric series in $\bK_{\Rplus^n}[[X_n]]$ of $\frac
{x_1\cdots x_n} {(1-x_1)\cdots(1-x_n)}$.  We will argue that the usual
residue formula for computing Hadamard products holds in this setting.

\begin{lem}\label{thm:cone-product}
Let $\pi_1\colon\set R^n\to\set R^k$ and $\pi_2\colon\set R^n\to\set
R^{n-k}$ be the projections to the first $k$ and last $n-k$ coordinates,
respectively, so that we have $u=(\pi_1(u),\pi_2(u))$ for every $u\in\set
R^n$.  Each of the following constructions is a line-free cone:
\begin{enumerate}
\item the set $C_1 \star C_2 := \bigl\{(u, v-u) : u\in C_1,v\in
  C_2\bigr\}\subseteq\set R^{2n}$ for line-free cones $C_1,C_2\subseteq\set
  R^n$,
\item the set $C_1 \starkn C_2 := \bigl\{(u, \pi_2(v), \pi_1(v)-u) : u\in
  C_1,v\in C_2\bigr\}\subseteq\set R^{k+n}$ for line-free cones
  $C_1\subseteq\set R^k$ and $C_2\subseteq\set R^n$,
\item the set $C_1 \starnk C_2 := \bigl\{(u, v-\pi_1(u)) : u\in C_1,v\in
  C_2\bigr\}\subseteq\set R^{n+k}$ for line-free cones $C_1\subseteq\set
  R^n$ and $C_2\subseteq\set R^k$.
\end{enumerate}
\end{lem}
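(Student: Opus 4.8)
The plan is to show that each of the three sets defined in Lemma~\ref{thm:cone-product} satisfies the two defining conditions of a line-free cone, namely (a) closure under nonnegative linear combinations (the cone property) and (b) the line-free property that $w$ and $-w$ both lying in the set forces $w=0$. The key observation that makes this tractable is that all three constructions arise from the same template: one takes a point $(u,v)$ with $u$ ranging over a line-free cone $C_1$ and $v$ over a line-free cone $C_2$, and then applies an $\set R$-linear map to the pair $(u,v)$ to land in the target space. Since $C_1\times C_2$ is itself a line-free cone in the product space (this is immediate: closure is componentwise, and if $(u,v)$ and $-(u,v)=(-u,-v)$ both lie in $C_1\times C_2$ then $u,-u\in C_1$ and $v,-v\in C_2$, whence $u=v=0$), the whole lemma reduces to understanding how the cone and line-free properties transport under a linear map.

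First I would record that the image of a line-free cone under a linear map is always a cone. This is routine: if $w_1=\phi(p_1)$ and $w_2=\phi(p_2)$ with $p_1,p_2\in C_1\times C_2$, then for $\lambda,\mu\geq0$ we have $\lambda w_1+\mu w_2=\phi(\lambda p_1+\mu p_2)$, and $\lambda p_1+\mu p_2\in C_1\times C_2$ by the cone property of the source; topological closedness of the image follows because each of our three maps is injective on the source cone (as I explain next), so the image is a linear-homeomorphic copy of a closed set. Concretely, for case~(1) the map is $\phi(u,v)=(u,v-u)$, whose inverse on the image is $(a,b)\mapsto(a,a+b)$; for case~(2) the map is $(u,v)\mapsto(u,\pi_2(v),\pi_1(v)-u)$, recovered via $(a,b,c)\mapsto(a,(a+c,b))$; for case~(3) the map is $(u,v)\mapsto(u,v-\pi_1(u))$, recovered via $(a,b)\mapsto(a,b+\pi_1(a))$. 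In each case the recovery map is linear, so $\phi$ is a linear isomorphism onto its image, which is therefore closed since $C_1\times C_2$ is.

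The remaining and genuinely load-bearing step is the line-free property, and here injectivity of $\phi$ is exactly what is needed. Suppose $w$ and $-w$ both lie in the constructed set; write $w=\phi(p)$ and $-w=\phi(p')$ with $p,p'\in C_1\times C_2$. Then $\phi(p+p')=w+(-w)=0=\phi(0)$, and since $\phi$ is injective we get $p+p'=0$, i.e.\ $p'=-p$. Thus both $p$ and $-p$ lie in the line-free cone $C_1\times C_2$, forcing $p=0$ and hence $w=\phi(0)=0$. I would carry this out once as a general lemma---"if $\phi$ is an injective linear map and $K$ is a line-free cone, then $\phi(K)$ is a line-free cone"---and then instantiate it three times by exhibiting the explicit linear inverses above.

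\textbf{The main obstacle} I anticipate is not any single hard inference but the bookkeeping of the coordinate projections $\pi_1,\pi_2$ in cases~(2) and~(3), where the source and target live in spaces of different dimensions ($\set R^{k}\times\set R^{n}$ mapping to $\set R^{k+n}$, and $\set R^{n}\times\set R^{k}$ mapping to $\set R^{n+k}$). The care required is to verify that the candidate inverse maps I wrote down really are two-sided inverses of $\phi$ restricted to the relevant block structure---in particular that $\pi_1$ and $\pi_2$ reassemble $v$ correctly as $(\pi_1(v),\pi_2(v))=v$. Once injectivity is confirmed in each case, both the closedness and the line-free conclusion follow formally from the general lemma, so the only real work is this dimension-tracking verification.
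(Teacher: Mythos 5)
Your proposal is correct, but it proceeds along a genuinely different route from the paper's proof. The paper verifies the two properties directly and separately for each construction: the cone property by an explicit computation with conic combinations, and line-freeness by equating coordinates block by block (from $u=-u'$ it deduces $u=u'=0$ because $C_1$ is line-free, then from $v=-v'$ it deduces $v=v'=0$ because $C_2$ is), with constructions (2) and (3) handled by ``the same argument with minor changes.'' You instead factor all three cases through a single structural lemma: $C_1\times C_2$ is a line-free cone in the product space, each construction is its image under a linear map $\phi$ admitting an explicit linear inverse, and the injective linear image of a line-free cone is again a line-free cone (via $\phi(p+p')=\phi(0)$ and injectivity, so $p'=-p$ and line-freeness of the source applies). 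This buys uniformity---the only case-specific work is the dimension-tracking check that your three inverse maps really invert $\phi$, which they do---and it also settles a point the paper's proof silently omits: the Definition in Section~\ref{sec:series-wrt-cone} requires a cone to be topologically closed, and the paper's argument never verifies closedness of the constructed sets, whereas your homeomorphism-onto-image observation disposes of it. Note that injectivity is genuinely load-bearing for that step too, since the image of a closed cone under a non-injective linear map need not be closed; in fact your three maps are linear automorphisms of the full ambient spaces $\set R^{2n}$, $\set R^{k+n}$, and $\set R^{n+k}$, which makes the closedness transfer immediate. The paper's direct check is shorter per case and avoids the auxiliary lemma; yours is more economical across the three cases and slightly more complete with respect to the stated definition.
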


\begin{proof}
For the first construction, given $u_1,u_2\in C_1$, $v_1,v_2\in C_2$, and
$\lambda,\mu\geq0$, $u := \lambda u_1+\mu u_2$ and $v := \lambda v_1+\mu
v_2$ are in $C_1$ and~$C_2$, respectively, because $C_1,C_2$ are cones.
Consequently, $\lambda (u_1, v_1-u_1) + \mu (u_2, v_2-u_2) = (u, v-u)$ is
in~$C_1\star C_2$, and the latter is a cone.  Next, given $(u, v-u)$
in~$C_1 \star C_2$, if $- (u, v-u)$ is in~$C_1\star C_2$, too, then it can
be written $(u', v'-u')$.  From $(u, v-u) = - (u', v'-u')$ follows $u=-u'$,
which implies $u=u'=0$ because $C_1$ is line-free.  This, in turn, implies
that $v=-v'$, which implies that $v=v'=0$ because $C_2$ is line-free.
Therefore, $C_1\star C_2$ is line-free.

The same argument applies with minor changes to the two other
constructions.
\end{proof}

As a consequence, each of $\bK_{C_1\star C_2}\langle\langle
X_n,Y_n\rangle\rangle$, $\bK_{C_1 \starkn C_2}\langle\langle
X_n,Y_k\rangle\rangle$, and $\bK_{C_1 \starnk C_2}\langle\langle
X_n,Y_k\rangle\rangle$ is a bona fide ring for any two line-free cones
$C_1,C_2$ of $\set R^n$ or $\set R^k$ (as required).  Remark the intended
choice to use~$X_n$ in the case $C_1 \subseteq \set Z^k$ as well, and
not~$X_k$, in accordance to Equation~\eqref{eq:semi-hadam-k-n-as-res}
below.  As well, observe $C_1 \star C_2 = C_1 \underset{n\leq n}{\star} C_2
= C_1 \underset{n\geq n}{\star} C_2$, and similar identities on the level
of rings.

\begin{lem}\label{thm:hadam-as-res}
\emph{(i)} Let $C_1,C_2\subseteq\set R^n$ be two line-free cones.  Then,
for any $f\in \bK_{C_1}\langle\langle X_n\rangle\rangle$ and $g\in
\bK_{C_2}\langle\langle X_n\rangle\rangle$,
\begin{equation}\label{eq:hadam-as-res}
f \odot g =
\Res_{y_1,\dots,y_n}
  \frac1{y_1\cdots y_n}
  f\left(\frac{x_1}{y_1},\dots,\frac{x_n}{y_n}\right)
  g(y_1,\dots,y_n) ,
\end{equation}
where the argument of the residue is understood as a product in $\bK_{C_1\star
  C_2}\langle\langle X_n,Y_n\rangle\rangle$.

\emph{(ii)} For~$k\leq n$, let $C_1\subseteq\set R^k$ and $C_2\subseteq\set
R^n$ be two line-free cones.  Then, for any $f\in \bK_{C_1}\langle\langle
X_k\rangle\rangle$ and $g\in \bK_{C_2}\langle\langle X_n\rangle\rangle$,
\begin{equation}\label{eq:semi-hadam-k-n-as-res}
f \odotkn g =
\Res_{y_1,\dots,y_k}
  \frac1{y_1\cdots y_k}
  f\left(\frac{x_1}{y_1},\dots,\frac{x_k}{y_k}\right)
  g(y_1,\dots,y_k,x_{k+1},\dots,x_n) ,
\end{equation}
where the argument of the residue is understood as a product in $\bK_{C_1 \starkn
  C_2}\langle\langle X_n,Y_k\rangle\rangle$.

\emph{(iii)} For~$k\leq n$, let $C_1\subseteq\set R^n$ and
$C_2\subseteq\set R^k$ be two line-free cones.  Then, for any $f\in
\bK_{C_1}\langle\langle X_n\rangle\rangle$ and $g\in \bK_{C_2}\langle\langle
X_k\rangle\rangle$,
\begin{equation}\label{eq:semi-hadam-n-k-as-res}
f \odotnk g =
\Res_{y_1,\dots,y_k}
  \frac1{y_1\cdots y_k}
  f\left(\frac{x_1}{y_1},\dots,\frac{x_k}{y_k},x_{k+1},\dots,x_n\right)
  g(y_1,\dots,y_k) ,
\end{equation}
where the argument of the residue is understood as a product in $\bK_{C_1 \starnk
  C_2}\langle\langle X_n,Y_k\rangle\rangle$.
\end{lem}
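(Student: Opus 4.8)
The plan is to prove part (i) first by a direct coefficient computation, then observe that parts (ii) and (iii) follow by the same computation carried out on only a subset of the variables. The key point is that the residue formula~\eqref{eq:hadam-as-res} is really just the classical identity for Hadamard products, but here it must be justified \emph{in the series ring} $\bK_{C_1\star C_2}\langle\langle X_n,Y_n\rangle\rangle$, so the main work is checking that the product on the right-hand side actually lives in that ring and that extracting the residue makes sense term by term.

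\begin{proof}[Proof sketch]
We treat part~(i); parts~(ii) and~(iii) are obtained \emph{mutatis mutandis}.
Write $f=\sum_{u\in\set Z^n} a_u X_n^u$ with support in (a finite union of translates of)~$C_1$, and $g=\sum_{v\in\set Z^n} b_v X_n^v$ with support in~$C_2$, using the multi-index shorthand $X_n^u = x_1^{u_1}\cdots x_n^{u_n}$.
First I would expand the two factors appearing inside the residue. Substituting $x_i\mapsto x_i/y_i$ in~$f$ gives $f(X_n/Y_n)=\sum_u a_u X_n^u Y_n^{-u}$, whose support as a series in $(X_n,Y_n)$ lies in $\{(u,-u):u\in C_1\}$; multiplying by $g(Y_n)=\sum_v b_v Y_n^v$ and by $1/(y_1\cdots y_n)$ yields
\begin{equation*}
\frac1{y_1\cdots y_n}\,f(X_n/Y_n)\,g(Y_n)
 = \sum_{u,v} a_u b_v\, X_n^u\, Y_n^{\,v-u-\mathbf 1},
\end{equation*}
where $\mathbf 1=(1,\dots,1)$. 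By Lemma~\ref{thm:cone-product} the support of this product lies in the line-free cone $C_1\star C_2$ (shifted by $-\mathbf 1$), so the product is a genuine element of $\bK_{C_1\star C_2}\langle\langle X_n,Y_n\rangle\rangle$ and the expression on the right of~\eqref{eq:hadam-as-res} is well defined.

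Next I would extract the residue in $y_1,\dots,y_n$. By definition the residue picks out the coefficient of $Y_n^{-\mathbf 1}=y_1^{-1}\cdots y_n^{-1}$, that is, the terms with $v-u-\mathbf 1 = -\mathbf 1$, equivalently $v=u$. Hence
\begin{equation*}
\Res_{y_1,\dots,y_n}\frac1{y_1\cdots y_n}\,f(X_n/Y_n)\,g(Y_n)
 = \sum_{u} a_u b_u\, X_n^u = f\odot g,
\end{equation*}
which is exactly the definition of the Hadamard product. The only subtlety is that the residue must be a legitimate operation on $\bK_{C_1\star C_2}\langle\langle X_n,Y_n\rangle\rangle$: this holds because, as noted in Section~\ref{sec:series-wrt-cone}, each ring $\bK_C\langle\langle\cdots\rangle\rangle$ inherits the residue extraction from the ambient space of infinite arrays, so selecting the coefficient of a fixed monomial is unambiguous.

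For part~(ii), the same substitution and multiplication are performed only in the first $k$ variables, with $g$ retaining its dependence on $x_{k+1},\dots,x_n$; the relevant cone is $C_1\starkn C_2$, and extracting the residue in $y_1,\dots,y_k$ selects $v$ whose first $k$ coordinates equal $u$, reproducing the definition~\eqref{eq:semi-hadam-k-n} of $\odotkn$. Part~(iii) is symmetric, using $C_1\starnk C_2$ and the residue in $y_1,\dots,y_k$. \textbf{The main obstacle} I anticipate is not the coefficient bookkeeping, which is routine once the supports are tracked, but rather confirming that the three products genuinely lie in the respective cone-restricted rings---this is precisely what Lemma~\ref{thm:cone-product} supplies, so invoking it at the correct moment for each case is the crux.
\end{proof}
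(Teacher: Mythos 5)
Your proof is correct and takes essentially the same route as the paper's: both arguments first invoke Lemma~\ref{thm:cone-product} to certify that the substituted factor $f(x_1/y_1,\dots)$ and $g$ have supports inside the line-free cone $C_1\star C_2$ (resp.\ its $k$-variable variants), so the product is well defined in the corresponding ring, and then perform the identical coefficient computation in which the residue forces $v=u$. Your dispatch of parts (ii) and (iii) as the same bookkeeping on the first $k$ variables matches the paper's closing remark that those cases follow by ``calculations analogous'' to the case (i) expansion.
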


Before the proof, observe that \eqref{eq:semi-hadam-k-n-as-res}
and~\eqref{eq:semi-hadam-n-k-as-res} cannot just be obtained as
specializations of~\eqref{eq:hadam-as-res}.

\begin{proof}
We start by proving~\eqref{eq:hadam-as-res}.  Set $C : = C_1\star
C_2\subseteq\set R^{2n}$, which is a line-free cone by
Lemma~\ref{thm:cone-product}.  Now, observe that $f(x_1/y_1,\dots,x_n/y_n)$
and $g(y_1,\dots,y_n)$ are in $\bK_C\langle\langle X_n,Y_n\rangle\rangle$,
respectively because $\bigl\{(u, -u) : u\in C_1\bigr\} \subseteq C$ and
because $\bigl\{(0, v) : v\in C_2\bigr\} \subseteq C$. So, the argument of
the residue in~\eqref{eq:hadam-as-res} is a well-defined product.  Call
it~$h$.  To see that $\Res_{y_1,\dots,y_n} h = f \odot g$, calculate:
\begin{multline}\label{eq:hadam-calc}
h =
  \frac1{y_1\cdots y_n}
  \biggl(\sum_{u_1,\dots,u_n} f_{u_1,\dots,u_n} \frac{x_1^{u_1}\cdots x_n^{u_n}}{y_1^{u_1} \cdots y_n^{u_n}}\biggr)
  \biggl(\sum_{v_1,\dots,v_n} g_{v_1,\dots,v_n} y_1^{v_1}\cdots y_n^{v_n}\biggr) = \\
\sum_{i_1,\dots,i_n} \biggl(
  \sum_{(v_1,\dots,v_n)-(u_1,\dots,u_n)=(i_1,\dots,i_n)}
    f_{u_1,\dots,u_n}g_{v_1,\dots,v_n} x_1^{u_1}\cdots x_n^{u_n}
  \biggr) y_1^{i_1-1}\cdots y_n^{i_n-1} ,
\end{multline}
where the sums are meant to extend over all integers with the understanding
that $f_{u_1,\dots,u_n}$ and $g_{v_1,\dots,v_n}$ are zero outside of the
respective finite unions of translated cones.
Applying~$\Res_{y_1,\dots,y_n}$ to this identity selects the term for which
$i_1 = \dots = i_n = 0$, forcing $(v_1,\dots,v_n) = (u_1,\dots,u_n)$.
Hence we have:
\begin{equation*}
\Res_{y_1,\dots,y_n} h =
  \sum_{(u_1,\dots,u_n)} f_{u_1,\dots,u_n}g_{u_1,\dots,u_n} x_1^{u_1}\cdots x_n^{u_n} =
  f \odot g ,
\end{equation*}
as was to be proved to justify~\eqref{eq:hadam-as-res}.

To prove~\eqref{eq:semi-hadam-k-n-as-res}, set $C' : = C_1 \starkn
C_2\subseteq\set R^{k+n}$, which is a line-free cone by
Lemma~\ref{thm:cone-product}.  Now, observe that $f(x_1/y_1,\dots,x_k/y_k)$
and $g(y_1,\dots,y_k,x_{k+1},\dots,x_n)$ are in $\bK_{C'}\langle\langle
X_n,Y_k\rangle\rangle$, respectively because $\bigl\{(u, 0, -u) : u\in
C_1\bigr\} \subseteq C'$ and because $\bigl\{(0, \pi_2(v), \pi_1(v)) : v\in
C_2\bigr\} \subseteq C'$. So, the argument of the residue
in~\eqref{eq:semi-hadam-k-n-as-res} is a well-defined product.  The proof
of~\eqref{eq:semi-hadam-k-n-as-res} then follows calculations analogous
to~\eqref{eq:hadam-calc}.

To prove~\eqref{eq:semi-hadam-n-k-as-res}, set $C'' := C_1 \starnk
C_2\subseteq\set R^{n+k}$, which is a line-free cone by
Lemma~\ref{thm:cone-product}.  Now, observe that
$f(x_1/y_1,\dots,x_k/y_k,x_{k+1},\dots,x_n)$ and $g(y_1,\dots,y_k)$ are in
$\bK_{C''}\langle\langle X_n,Y_k\rangle\rangle$, respectively because
$\bigl\{(u, -\pi_1(u)) : u\in C_1\bigr\} \subseteq C''$ and because
$\bigl\{(0, v) : v\in C_2\bigr\} \subseteq C''$. So, the argument of the
residue in~\eqref{eq:semi-hadam-n-k-as-res} is a well-defined product.  The
proof of~\eqref{eq:semi-hadam-n-k-as-res} then follows calculations
analogous to~\eqref{eq:hadam-calc}.
\end{proof}

Recall the notation~$[P/Q]_C$ to denote the expansion of a rational
function with respect to a cone.

\begin{lem}\label{thm:ppart-as-res}
For every line-free cone $C\subseteq\set R^n$, every $\phi\in
\bK_C\langle\langle X_n\rangle\rangle$, and every
$k\in\{1,\dots,n\}$, we have:
\begin{align}
[x_1^>\cdots x_k^>]\phi &= \label{eq:ppart-as-res-plain-phi}
  \Res_{y_1,\dots,y_k}
    \frac{1}{y_1\cdots y_k}
    \left[\frac{\frac{x_1}{y_1}\cdots\frac{x_k}{y_k}}{(1-\frac{x_1}{y_1})\cdots(1-\frac{x_k}{y_k})}\right]_{\Rplus^k\times\Rminus^k}
    \phi(y_1,\dots,y_k,x_{k+1},\dots,x_n) \\
{} &= \label{eq:ppart-as-res-substd-phi}
  \Res_{y_1,\dots,y_k}
    \frac1{y_1\cdots y_k}
    \phi\left(\frac{x_1}{y_1},\dots,\frac{x_k}{y_k},x_{k+1},\dots,x_n\right)
    \left[\frac{y_1\cdots y_k}{(1-y_1)\cdots(1-y_k)}\right]_{\Rplus^k} ,
\end{align}
where the brackets around rational functions in
\eqref{eq:ppart-as-res-plain-phi} and~\eqref{eq:ppart-as-res-substd-phi}
denote taking their expansions in, respectively,
$\bK_{\Rplus^k\times\Rminus^k}\langle\langle X_k,Y_k\rangle\rangle$ and
$\bK_{\Rplus^k}\langle\langle Y_k\rangle\rangle$.
\end{lem}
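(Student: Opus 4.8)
The plan is to identify both right-hand sides as instances of the semi-Hadamard residue formulas already established in Lemma~\ref{thm:hadam-as-res}, applied to~$\phi$ and to the series that encodes strictly positive exponents. First I would introduce the $k$-variable series
\[
\iota := \sum_{i_1,\dots,i_k>0} x_1^{i_1}\cdots x_k^{i_k},
\]
and note that it is exactly the cone expansion $\bigl[\frac{x_1\cdots x_k}{(1-x_1)\cdots(1-x_k)}\bigr]_{\Rplus^k}$ inside $\bK_{\Rplus^k}[[X_k]]$, since each factor $x_i/(1-x_i)$ expands as $\sum_{j>0} x_i^{j}$ over the cone~$\Rplus$. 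The crucial observation is then purely coefficient-wise: for any $\phi\in\bK_C\langle\langle X_n\rangle\rangle$, truncating to positive exponents in the first $k$ variables is a semi-Hadamard product with~$\iota$, and it can be read in either direction,
\[
[x_1^>\cdots x_k^>]\phi = \iota \odotkn \phi = \phi \odotnk \iota,
\]
because in both cases the coefficient at $x_1^{i_1}\cdots x_n^{i_n}$ is $\phi_{i_1,\dots,i_n}$ multiplied by the indicator that $i_1,\dots,i_k>0$.

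With this dictionary in place, \eqref{eq:ppart-as-res-plain-phi} follows from Lemma~\ref{thm:hadam-as-res}(ii) applied to $f=\iota\in\bK_{\Rplus^k}\langle\langle X_k\rangle\rangle$ and $g=\phi\in\bK_C\langle\langle X_n\rangle\rangle$: formula~\eqref{eq:semi-hadam-k-n-as-res} rewrites $\iota\odotkn\phi$ as exactly the stated residue, in which the substituted factor $\iota(x_1/y_1,\dots,x_k/y_k)$ carries the geometric series with argument $x_i/y_i$, while $\phi$ is evaluated at $(y_1,\dots,y_k,x_{k+1},\dots,x_n)$. Symmetrically, \eqref{eq:ppart-as-res-substd-phi} follows from Lemma~\ref{thm:hadam-as-res}(iii) applied to $f=\phi$ and $g=\iota$, formula~\eqref{eq:semi-hadam-n-k-as-res} turning $\phi\odotnk\iota$ into the second residue, with $\phi$ now substituted in its first $k$ slots and the geometric series carried by~$g$ in the variables~$Y_k$.

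The bulk of the statement therefore reduces to the already-proved Lemma~\ref{thm:hadam-as-res}, and the only points demanding care are the cone bookkeeping, which I expect to be the main obstacle. Concretely, one must check that $\iota$ really is the announced expansion over~$\Rplus^k$, that the two semi-Hadamard products genuinely coincide with the positive part, and---most delicately---that the substituted geometric factors land in line-free cones, so that the residue arguments are honest products rather than formal manipulations of divergent sums. For~\eqref{eq:ppart-as-res-plain-phi} this is the claim that $\iota(x_1/y_1,\dots,x_k/y_k)$, whose support is $\{(u,-u):u\in\Rplus^k,\ u_i\geq1\}$, lies in $\Rplus^k\times\Rminus^k$, which justifies the bracket $[\,\cdot\,]_{\Rplus^k\times\Rminus^k}$; the ambient product then lives in $\bK_{\Rplus^k\starkn C}\langle\langle X_n,Y_k\rangle\rangle$. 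For~\eqref{eq:ppart-as-res-substd-phi} the analogous check places the product in $\bK_{C\starnk\Rplus^k}\langle\langle X_n,Y_k\rangle\rangle$, using Lemma~\ref{thm:cone-product} and the canonical inclusion of a cone ring into that of any larger cone. Once these memberships are verified, both identities are immediate specializations of Lemma~\ref{thm:hadam-as-res}.
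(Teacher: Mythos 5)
Your proposal is correct and takes essentially the same route as the paper's own proof: your series~$\iota$ is exactly the paper's~$\psi$ (the geometric-series expansion of $\frac{x_1\cdots x_k}{(1-x_1)\cdots(1-x_k)}$ over~$\Rplus^k$), the two-sided representation $[x_1^>\cdots x_k^>]\phi = \iota \odotkn \phi = \phi \odotnk \iota$ is the paper's starting point, and both identities are then deduced precisely as in the paper from parts~(ii) and~(iii) of Lemma~\ref{thm:hadam-as-res}, with the same cone bookkeeping (the substituted factor supported in $\Rplus^k\star\{0\}^k\subseteq\Rplus^k\times\Rminus^k$, the products living in $\bK_{\Rplus^k\starkn C}\langle\langle X_n,Y_k\rangle\rangle$ and $\bK_{C\starnk\Rplus^k}\langle\langle X_n,Y_k\rangle\rangle$). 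There is nothing to correct.
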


\begin{proof}
Fix $C'$~to $\Rplus^k$ and $\psi\in \bK_{C'}\langle\langle X_k\rangle\rangle$
to the geometric-series expansion of $\frac {x_1\cdots x_k}
{(1-x_1)\cdots(1-x_k)}$.  The desired positive part can then be represented
in two ways as variants of Hadamard products: $[x_1^>\cdots x_k^>]\phi =
\psi \odotkn \phi = \phi \odotnk \psi$.  We then use
Lemma~\ref{thm:hadam-as-res} twice:

\begin{itemize}

\item Firstly, by~\eqref{eq:semi-hadam-k-n-as-res}, $[x_1^>\cdots
  x_k^>]\phi = \Res_{y_1,\dots,y_k} \frac{1}{y_1\cdots y_k}
  \psi\bigl(\frac{x_1}{y_1},\dots,\frac{x_k}{y_k}\bigr)
  \phi(y_1,\dots,y_k,x_{k+1},\dots,x_n)$, where the product is in $\bK_{C'
    \starkn C}\langle\langle X_n,Y_k\rangle\rangle$.  More specifically,
  $\psi\bigl(\frac{x_1}{y_1},\dots,\frac{x_k}{y_k}\bigr)$ is in $\bK_{C'
    \starkn \{0\}^n}\langle\langle X_n,Y_k\rangle\rangle$, and with the
  proper identification in $\bK_{\Rplus^k\star\{0\}^k}\langle\langle
  X_k,Y_k\rangle\rangle$, hence \eqref{eq:ppart-as-res-plain-phi} and the
  announced expansion.

\item Secondly, by~\eqref{eq:semi-hadam-n-k-as-res}, $[x_1^>\cdots
  x_k^>]\phi = \Res_{y_1,\dots,y_k} \frac{1}{y_1\cdots y_k}
  \phi\bigl(\frac{x_1}{y_1},\dots,\frac{x_k}{y_k},x_{k+1},\dots,x_n\bigr)
  \psi(y_1,\dots,y_k)$, where the product is in $\bK_{C \starnk
    C'}\langle\langle X_n,Y_n\rangle\rangle$.  More specifically,
  $\psi(y_1,\dots,y_k)$ is in $\bK_{\{0\}^n \starnk C'}\langle\langle
  X_n,Y_k\rangle\rangle$, and with the proper identification in
  $\bK_{C'}\langle\langle Y_k\rangle\rangle$, hence
  \eqref{eq:ppart-as-res-substd-phi} and the announced expansion.

\end{itemize}

\end{proof}

\subsection{Specializations of positive parts}
\label{sec:spec-pp}

Next, we would like to justify a formula to represent the series 
$\bigl([x_1^>\cdots x_k^>]f\bigr)\bigr|_{x_1=\alpha_1,\dots,x_k=\alpha_k}$ as a Hadamard product, where
$\alpha_1,\dots,\alpha_k$ are fixed elements of~$\bK$.  For the evaluation of the
positive part at some arbitrary field elements to make sense, we need to
impose a restriction on the cones, which we will express using the
following notion.

\begin{definition}
  Let $\pi_2\colon\set R^n\to\set R^{n-k}$ be the projection to the last
  $n-k$ coordinates.
We say that two line-free cones $C_1,C_2\subseteq\set R^n$ are \emph{in
  opposition with respect to the first $k$ variables\/} if
$\pi_2(C_1),\pi_2(C_2)$ are line-free cones and $C_1\cap C_2\cap(\set
R^k\times\{0\}^{n-k})=\{0\}^n$.
\end{definition}

\begin{lem}\label{thm:bounded-affine-intersection}
Let $C_1$ and~$C_2$ in~$\set R^n$ be in opposition with respect to the
first $k\leq n$ variables, and let $u,v\in\set R^n$.  Then, the set
$(C_1+u)\cap(C_2+v)\cap(\set R^k\times\{0\}^{n-k})$ is bounded.
\end{lem}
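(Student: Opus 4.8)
The plan is to argue by contradiction via a normalization (compactness) argument. Of the two requirements packaged into ``in opposition'', only the intersection condition $C_1\cap C_2\cap(\set R^k\times\{0\}^{n-k})=\{0\}^n$ will actually be used; the line-freeness of $\pi_2(C_1),\pi_2(C_2)$ plays no role here. Write $W:=\set R^k\times\{0\}^{n-k}$ and $B:=(C_1+u)\cap(C_2+v)\cap W$, and suppose $B$ were unbounded. Then there is a sequence $(p_m)_{m\geq0}$ in~$B$ with $\|p_m\|\to\infty$, and for each~$m$ we may write $p_m=a_m+u=b_m+v$ with $a_m\in C_1$ and $b_m\in C_2$. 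Since $u$ is fixed, $\|a_m\|\geq\|p_m\|-\|u\|\to\infty$, so $a_m\neq0$ for all large~$m$.

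Next I would normalize and extract a limiting direction. The unit vectors $a_m/\|a_m\|$ lie on the compact unit sphere, so after passing to a subsequence we may assume $a_m/\|a_m\|\to\hat a$ with $\|\hat a\|=1$. Because $C_1$ is closed under multiplication by the positive scalar~$1/\|a_m\|$, we have $a_m/\|a_m\|\in C_1$, and closedness of~$C_1$ then gives $\hat a\in C_1$. To see $\hat a\in C_2$, I would exploit that $a_m-b_m=v-u$ is constant: dividing by~$\|a_m\|$ gives $b_m/\|a_m\|=a_m/\|a_m\|-(v-u)/\|a_m\|\to\hat a$, and since each $b_m/\|a_m\|\in C_2$ and $C_2$ is closed, $\hat a\in C_2$. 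Finally, $p_m\in W$ forces $\pi_2(a_m)=-\pi_2(u)$ to be constant, whence $\pi_2(\hat a)=\lim_m \pi_2(a_m)/\|a_m\|=0$, i.e.\ $\hat a\in W$.

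Putting the three memberships together yields $\hat a\in C_1\cap C_2\cap W=\{0\}^n$ by the opposition hypothesis, contradicting $\|\hat a\|=1$. Hence $B$ is bounded.

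The crux is the middle step: producing a single nonzero direction lying simultaneously in $C_1$, $C_2$, and~$W$. This works precisely because, after normalization, the fixed translations $u,v$ become negligible, so the constant difference $a_m-b_m$ does not perturb the common limiting direction, and because topological closedness is built into the definition of a cone. A more structural alternative avoids sequences entirely: each $C_i$ is a closed convex cone (the cone axiom implies convexity), so $C_1+u$, $C_2+v$, and the subspace~$W$ are closed convex sets with recession cones $C_1$, $C_2$, and~$W$; when $B$ is nonempty its recession cone equals $C_1\cap C_2\cap W=\{0\}^n$, and a nonempty closed convex set with trivial recession cone is bounded. I would nonetheless favor the elementary argument above, as it is self-contained and uses only closure under positive scaling rather than any convex-analytic machinery.
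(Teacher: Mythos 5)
Your proof is correct, but it takes a genuinely different route from the paper's. The paper argues via rays: it observes that the intersection $(C_1+u)\cap(C_2+v)\cap(\set R^k\times\{0\}^{n-k})$ is convex (convexity is built into the paper's definition of a cone, since $\lambda u+\mu v\in C$ for all $\lambda,\mu\geq0$), recalls without proof that an unbounded convex set contains a ray $R_{w,d}=\{w+cd:c\geq0\}$ with $d\neq0$, and that a ray contained in a translate $w'+C$ of a cone has its direction $d$ in $C$; the direction of such a ray then lies in $C_1\cap C_2\cap(\set R^k\times\{0\}^{n-k})=\{0\}^n$, a contradiction. This is exactly the recession-cone argument you sketch in your closing paragraph as the ``structural alternative'' — so you have in effect reproduced the paper's proof as your plan~B. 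Your preferred argument, by contrast, is a sequential normalization: extract $p_m$ with $\|p_m\|\to\infty$, pass to a limiting unit direction $\hat a$ of $a_m/\|a_m\|$ on the compact sphere, and check membership of $\hat a$ in each of $C_1$, $C_2$, and $\set R^k\times\{0\}^{n-k}$ directly, using only topological closedness and closure under positive scaling. Each step checks out (in particular $a_m-b_m=v-u$ constant and $\pi_2(a_m)=-\pi_2(u)$ constant, so both perturbations vanish after division by $\|a_m\|$). What your version buys is self-containedness and greater generality: it never invokes convexity, so it would remain valid for cones defined only by closedness and positive scaling, and it avoids the convex-geometry facts the paper cites without proof. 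What the paper's version buys is brevity, at the cost of leaning on those standard facts and on the convexity hidden in the cone definition. Your observation that only the trivial-intersection half of ``in opposition'' is used, and not the line-freeness of $\pi_2(C_1),\pi_2(C_2)$, is also accurate and applies equally to the paper's proof.
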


\begin{proof}
Given $w,d \in \set R^n$, denote by $R_{w,d}$ the set $\{w + cd :
c\geq0\}$.  We recall without proof that: \ \emph{(i)\/}~for any~$R_{w,d}$
contained in a cone~$C$, both $w$ and~$d$ are in~$C$; \ \emph{(ii)\/}~any
unbounded convex set~$\bK$ contains some~$R_{w,d}$ with nonzero~$d$;
\ \emph{(iii)\/}~as a consequence, for any cone~$C$, any time $R_{w,d}
\subseteq w' + C$, we have $w-w',d \in C$.

Now, if the (convex) set $(C_1+u)\cap(C_2+v)\cap(\set
R^k\times\{0\}^{n-k})$ was unbounded, it would contain some~$R_{w,d}$ for a
nonzero~$d$.  We would have in particular that $d$~would be in $C_1$, $C_2$,
and~$\set R^k\times\{0\}^{n-k}$, implying $d=0$.  This would contradict
that $C_1$ and~$C_2$ are in opposition with respect to the first~$k$
variables.
\end{proof}

Lemma~\ref{lem:wf-spec-hadam} below gives a sufficient condition that
allows a Hadamard product to specialize at $x_1=\alpha_1$, \dots, $x_k=\alpha_k$ in a
well defined way.  Two potential obstructions to this are:
\ \emph{(i)\/}~situations in which $f\odot g$~involves either of
$x_1,\dots,x_k$ with a negative exponent; \ \emph{(ii)\/}~situations in
which the supports of $f$ and~$g$ make it possible that the Hadamard
product should involve some monomial in $x_{k+1},\dots,x_n$ having as
coefficient an infinite sum in $x_1,\dots,x_k$ that does not converge as a
series at $x_1=\alpha_1,\dots,x_k=\alpha_k$.  In the lemma, the cones being in
opposition ensures the finiteness of all sums under consideration
in~\emph{(ii)}, while case~\emph{(i)\/} is excluded by imposing~$\alpha_j\neq0$
when necessary.  To state conditions for the latter point, for~$1\leq j\leq
k\leq n$, for a series~$s$ in~$x_1,\dots,x_k$ and a series~$t$
in~$x_1,\dots,x_n$, we introduce two predicates $H_j^n(t)$ and~$H_j^{k\leq
  n}(s,t)$ that express, respectively, that $t$~cannot be evaluated at~$x_j
= 0$ and that the (generally asymmetric) Hadamard product of $s$ and~$t$
cannot be evaluated at~$x_j = 0$; they read:
\begin{quote}\begin{description}
\item[$H_j^n(t)$] : There exists a monomial in~$x_1,\dots,x_n$ with
  negative exponent of~$x_j$ that occurs with non-zero coefficient in the
  series~$t$.
\item[$H_j^{k\leq n}(s,t)$] : There exists a monomial~$m$
  in~$x_1,\dots,x_k$ with negative exponent of~$x_j$ and a monomial~$m'$
  in~$x_{k+1},\dots,x_n$, such that $m$~occurs with non-zero coefficient in
  the series~$s$ and $mm'$~occurs with non-zero coefficient in the
  series~$t$.
\end{description}\end{quote}
Note that $H_j^{k\leq n}(s,t)$ implies $H_j^k(s)$ and~$H_j^n(t)$.

\begin{lem}\label{lem:wf-spec-hadam}
\emph{(i)}~Let $C_1,C_2\subseteq\set R^n$ be in opposition with respect to
the first $k$ variables.  Further, let $f\in \bK_{C_1}\langle\langle
X_n\rangle\rangle$ and $g\in \bK_{C_2}\langle\langle X_n\rangle\rangle$.
Then, $(f\odot g)|_{x_1=\alpha_1,\dots,x_k=\alpha_k}$ is well-defined for every
$\alpha_1,\dots,\alpha_k\in \bK$, provided that for each~$1\leq j\leq k$, $\alpha_j\neq0$
if~$H_j^{n\leq n}(f,g)$.

\emph{(ii)}~Let $C_1\subseteq\set R^k$ and $C_2\subseteq\set R^n$ be such
that $C_1\times\{0\}^{n-k}$ and~$C_2$ are in opposition with respect to the
first $k$ variables.  Further, let $f\in \bK_{C_1}\langle\langle
X_k\rangle\rangle$ and $g\in \bK_{C_2}\langle\langle X_n\rangle\rangle$.
Then, $(f\odotkn g)|_{x_1=\alpha_1,\dots,x_k=\alpha_k}$ is well-defined for every
$\alpha_1,\dots,\alpha_k\in \bK$, provided that for each~$1\leq j\leq k$, $\alpha_j\neq0$
if~$H_j^{k\leq n}(f,g)$.

\emph{(iii)}~Let $C_1\subseteq\set R^n$ and $C_2\subseteq\set R^k$ be such
that $C_1$ and~$C_2\times\{0\}^{n-k}$ are in opposition with respect to the
first $k$ variables.  Further, let $f\in \bK_{C_1}\langle\langle
X_n\rangle\rangle$ and $g\in \bK_{C_2}\langle\langle X_k\rangle\rangle$.
Then, $(f\odotnk g)|_{x_1=\alpha_1,\dots,x_k=\alpha_k}$ is well-defined for every
$\alpha_1,\dots,\alpha_k\in \bK$, provided that for each~$1\leq j\leq k$, $\alpha_j\neq0$
if~$H_j^{k\leq n}(g,f)$.
\end{lem}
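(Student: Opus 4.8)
The plan is to prove the three parts by one common argument, the only differences lying in which embedded cone carries the opposition hypothesis and in which predicate $H_j$ detects bad exponents. I would treat part~(i) in full and then indicate the verbatim changes for~(ii) and~(iii). Throughout, I regard the various Hadamard products as elements of~$\bK^{\set Z^n}$ and argue on their supports, since these products are always well-defined as bilateral arrays; the only thing at stake is the specialization.

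For part~(i), first note that $\operatorname{supp}(f\odot g)=\operatorname{supp}(f)\cap\operatorname{supp}(g)$ is contained in a finite union of sets $(u+C_1)\cap(v+C_2)$ with $u,v\in\set Z^n$. I then fix a tail $\beta=(i_{k+1},\dots,i_n)$ and examine the coefficient $c_\beta\in\bK^{\set Z^k}$ of $x_{k+1}^{i_{k+1}}\cdots x_n^{i_n}$: its support is the set of first-$k$ coordinates of points of $\operatorname{supp}(f\odot g)$ lying in the affine slice $\set R^k\times\{\beta\}$. The key step is to show that $c_\beta$ is a Laurent polynomial, i.e.\ that this set is finite. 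Writing a slice point as $(i_1,\dots,i_k,0)+(0,\beta)$ turns each piece into $(u'+C_1)\cap(v'+C_2)\cap(\set R^k\times\{0\}^{n-k})$ with $u'=u-(0,\beta)$ and $v'=v-(0,\beta)$; this is bounded by Lemma~\ref{thm:bounded-affine-intersection}, precisely because $C_1$ and~$C_2$ are in opposition with respect to the first $k$~variables. Finitely many pieces then give a finite support for~$c_\beta$.

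With $c_\beta$ a Laurent polynomial, its evaluation at $(\alpha_1,\dots,\alpha_k)$ is unambiguous, the sole obstruction being a negative exponent of some~$x_j$ together with $\alpha_j=0$. I would observe that $f\odot g$ contains a monomial with negative exponent of~$x_j$ exactly when $H_j^{n\leq n}(f,g)$ holds, so that imposing $\alpha_j\neq0$ under that hypothesis removes the obstruction and makes every $c_\beta(\alpha_1,\dots,\alpha_k)\in\bK$ well-defined. To conclude that the specialized object is a genuine series, I note that its support in the free variables is contained in $\pi_2(\operatorname{supp}(f))$, hence in a finite union of translates of the line-free cone $\pi_2(C_1)$; thus it lies in $\bK_{\pi_2(C_1)}\langle\langle X_{n\setminus k}\rangle\rangle$.

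For parts~(ii) and~(iii) the supports are $\{(i_1,\dots,i_n)\in\operatorname{supp}(g):(i_1,\dots,i_k)\in\operatorname{supp}(f)\}$ and $\{(i_1,\dots,i_n)\in\operatorname{supp}(f):(i_1,\dots,i_k)\in\operatorname{supp}(g)\}$, respectively, which are genuinely asymmetric and not obtained from the symmetric case by embedding. The slice argument nevertheless runs identically once $C_1$ is replaced by its embedding $C_1\times\{0\}^{n-k}$ in~(ii) (resp.\ $C_2\times\{0\}^{n-k}$ in~(iii)), these being exactly the cones for which opposition is assumed, so that boundedness of each arbitrary-tail slice again follows from Lemma~\ref{thm:bounded-affine-intersection}; the predicate $H_j^{k\leq n}(f,g)$ (resp.\ $H_j^{k\leq n}(g,f)$) detects exactly the negative $x_j$-exponents of the product, and the free-variable support lands in translates of $\pi_2(C_2)$ (resp.\ $\pi_2(C_1)$). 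The main obstacle is the per-tail finiteness of the second paragraph: one must carefully reduce the arbitrary-tail slice to the zero-slice covered by Lemma~\ref{thm:bounded-affine-intersection} and, in the asymmetric cases, check that the embedded cone singled out by the opposition hypothesis is indeed the one appearing in the slice. The remaining verifications — that the predicates $H_j$ match precisely the appearance of negative $x_j$-exponents in each product — are routine bookkeeping.
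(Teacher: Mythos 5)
Your proof is correct and takes essentially the same route as the paper's: both fix the tail exponents $(i_{k+1},\dots,i_n)$, shift the resulting affine slice to the zero-slice so as to invoke Lemma~\ref{thm:bounded-affine-intersection} (with $C_1$, resp.~$C_2$, replaced by the embedded cone $C_1\times\{0\}^{n-k}$, resp.~$C_2\times\{0\}^{n-k}$, in the asymmetric cases --- which checks out because the shifted slice points have vanishing last coordinates), handle general supports by finite unions of translated cones, and let the predicates $H_j$ account exactly for the constraints $\alpha_j\neq0$. The only difference is cosmetic: you additionally record that the specialized object lies in $\bK_{\pi_2(C_1)}\langle\langle X_{n\setminus k}\rangle\rangle$ (resp.~$\bK_{\pi_2(C_2)}\langle\langle X_{n\setminus k}\rangle\rangle$), a point the paper's proof leaves implicit.
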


\begin{proof}
\emph{(i)\/}~Suppose first that there are vectors $e_1,e_2\in\set R^n$ such
that the support of $f$ is contained in $e_1+C_1$ and the support of $g$ is
contained in $e_2+C_2$.  Fix $i=(i_{k+1},\dots,i_n)\in\set Z^{n-k}$ and
introduce~$i' = (0,\dots,0,i_{k+1},\dots,i_n)\in\set Z^n$.  From $C_1$ and
$C_2$ being in opposition with respect to the first $k$ variables, it
follows by Lemma~\ref{thm:bounded-affine-intersection} that
$(e_1-i'+C_1)\cap(e_2-i'+C_2)\cap(\set R^k\times\{0\}^{n-k})$ is bounded.
In other words, $(e_1-i'+C_1)\cap(e_2-i'+C_2)\cap\pi_2^{-1}(0)$ is bounded,
and, after shifting by~$i'$, so is the set
$(e_1+C_1)\cap(e_2+C_2)\cap\pi_2^{-1}(i)$.  Hence, there are at most
finitely many vectors $(i_1,\dots,i_k)\in\set Z^k$ such that the
coefficient of $x_1^{i_1}\dots x_n^{i_n}$ in $f\odot g$ is nonzero.

In the general case, there exist finite families
$e_1^{(1)},\dots,e_1^{(r)}$ and $e_2^{(1)},\dots,e_2^{(s)}$ such that the
support of~$f$ is contained in $\bigcup_{i=1}^r e_1^{(i)} + C_1$ and such
that the support of~$g$ is contained in $\bigcup_{i=1}^r e_2^{(i)} + C_2$.
Given a fixed $i=(i_{k+1},\dots,i_n)\in\set Z^{n-k}$ again, the finiteness
of the number of vectors $(i_1,\dots,i_k)\in\set Z^k$ such that the
coefficient of $x_1^{i_1}\dots x_n^{i_n}$ in $f\odot g$ is nonzero still
holds, by distributivity and the previous argument.

The constraints on the~$\alpha_i$ for a well-defined substitution follow in all
cases.

\emph{(ii)\/}~The proof is like the case~\emph{(i)}, with minimal changes:
considering $e_1\in\set R^k$ instead of $e_1\in\set R^n$, replacing~$C_1$
by~$C_1\times\{0\}^{n-k}$ in the use of
Lemma~\ref{thm:bounded-affine-intersection}, and replacing~$f\odot g$
by~$f\odotkn g$ in the conclusion.

\emph{(iii)\/}~The proof is like the case~\emph{(i)}, with minimal changes:
considering $e_2\in\set R^k$ instead of $e_2\in\set R^n$, replacing~$C_2$
by~$C_2\times\{0\}^{n-k}$ in the use of
Lemma~\ref{thm:bounded-affine-intersection}, and replacing~$f\odot g$
by~$f\odotnk g$ in the conclusion.
\end{proof}

The following lemma generalizes Lemma~\ref{thm:cone-product} to cones in
opposition.

\begin{lem}\label{thm:opposing-cone-product}
For any~$m\geq k\geq 0$, let $\tau\colon\set R^m\to\set R^{m-k}$ be the
projection to the last $m-k$ coordinates.

\emph{(i)}~Given two cones $C_1,C_2\subseteq\set R^n$ that are in
opposition with respect to the first $k$ variables, the set $\tau(C_1\star
C_2) = \bigl\{(\pi_2(u), v-u) : u\in C_1,v\in C_2\bigr\}\subseteq\set
R^{(n-k)+n}$ is a line-free cone.

\emph{(ii)}~Given two cones $C_1\subseteq\set R^k$ and $C_2\subseteq\set
R^n$ such that $C_1\times\{0\}^{n-k}$ and~$C_2$ are in opposition with
respect to the first $k$ variables, the set $\tau(C_1 \starkn C_2) =
\bigl\{(\pi_2(v), \pi_1(v)-u) : u\in C_1,v\in C_2\bigr\}\subseteq\set
R^{k+n}$ is a line-free cone.

\emph{(iii)}~Given two cones $C_1\subseteq\set R^n$ and $C_2\subseteq\set
R^k$ such that $C_1$ and~$C_2\times\{0\}^{n-k}$ are in opposition with
respect to the first $k$ variables, the set $\tau(C_1 \starnk C_2) =
\bigl\{(\pi_2(u), v-\pi_1(u)) : u\in C_1,v\in C_2\bigr\}\subseteq\set
R^{n+k}$ is a line-free cone.
\end{lem}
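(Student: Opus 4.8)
The plan is to observe that each of the three sets is simply the image, under the linear projection~$\tau$, of the corresponding line-free cone furnished by Lemma~\ref{thm:cone-product}: dropping the first~$k$ coordinates of a generic element $(u,v-u)$ of $C_1\star C_2$ leaves $(\pi_2(u),v-u)$, and likewise for the two other constructions, so that $\tau(C_1\star C_2)$, $\tau(C_1\starkn C_2)$ and $\tau(C_1\starnk C_2)$ are exactly the sets written in the statement. This reduces all three items to one general fact: if $C\subseteq\set R^m$ is a line-free cone and $T\colon\set R^m\to\set R^p$ is linear with $\ker T\cap C=\{0\}$, then $T(C)$ is again a line-free cone. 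I would first establish this auxiliary statement and then verify its hypothesis $\ker\tau\cap(\,\cdot\,)=\{0\}$ in each of the three situations; this verification is precisely where the opposition hypothesis enters.

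For the auxiliary statement, closure under nonnegative linear combinations (hence convexity) is immediate from linearity of~$T$, and line-freeness is short: if $y$ and~$-y$ both lie in~$T(C)$, write $y=Tc_1$ and $-y=Tc_2$ with $c_1,c_2\in C$; then $c_1+c_2\in C$ and $T(c_1+c_2)=0$, so $c_1+c_2\in\ker T\cap C=\{0\}$, whence $c_2=-c_1$, and line-freeness of~$C$ forces $c_1=0$ and thus $y=0$. The one genuinely delicate point, and the step I expect to be the main obstacle, is that $T(C)$ must be \emph{topologically closed}: this does not come for free here because, unlike the invertible linear changes of variables implicit in Lemma~\ref{thm:cone-product}, the map~$\tau$ is a non-injective projection. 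I would argue that $T|_C$ is proper: for a closed ball $B\subseteq\set R^p$ the closed convex set $T^{-1}(B)\cap C$ cannot be unbounded, for otherwise it would contain a ray $\{w+cd:c\geq0\}$ with $d\neq0$ (fact~\emph{(ii)} recalled in the proof of Lemma~\ref{thm:bounded-affine-intersection}); then $d\in C$ by fact~\emph{(i)}, while $T(d)=0$ since $T$ stays bounded along the ray, contradicting $\ker T\cap C=\{0\}$. Hence preimages of compact sets under $T|_C$ are compact, and a proper map sends the closed set~$C$ to a closed set. (Equivalently, one normalises any sequence $Tc_i\to y$ onto the compact slice $C\cap S^{m-1}$ and rules out the escaping subsequence by the same contradiction.)

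It then remains to check $\ker\tau\cap(\,\cdot\,)=\{0\}$ in each case, a direct coordinate computation, since $\tau$ annihilates exactly the first~$k$ coordinates. In case~(i), imposing membership in $\ker\tau$ on an element $(u,v-u)$ means $\pi_2(u)=0$ and $v-u=0$, so $u=v\in C_1\cap C_2$ with $u\in\set R^k\times\{0\}^{n-k}$, forcing $u=0$ by the intersection condition $C_1\cap C_2\cap(\set R^k\times\{0\}^{n-k})=\{0\}$. Cases~(ii) and~(iii) are identical after substituting the corresponding memberships, the relevant intersections being $(C_1\times\{0\}^{n-k})\cap C_2\cap(\set R^k\times\{0\}^{n-k})=\{0\}$ and $C_1\cap(C_2\times\{0\}^{n-k})\cap(\set R^k\times\{0\}^{n-k})=\{0\}$, respectively. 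Each of these vanishing intersections is precisely the intersection clause in the definition of being in opposition with respect to the first~$k$ variables, so the auxiliary statement applies and yields the three line-free cones.
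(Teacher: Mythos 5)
Your proof is correct, and it takes a genuinely different---and in one respect more complete---route than the paper's. The paper proves line-freeness three times over by parallel coordinate computations: assuming $w,-w\in\tau(C_1\star C_2)$ (and likewise for the $\starkn$ and $\starnk$ constructions), it first invokes line-freeness of $\pi_2(C_1)$ and $\pi_2(C_2)$ to kill the trailing coordinates of $u,u',v,v'$, and then funnels the resulting identity $u+u'=v+v'$ into the intersection clause $C_1\cap C_2\cap(\set R^k\times\{0\}^{n-k})=\{0\}$ of the opposition hypothesis. You instead factor all three cases through a single auxiliary lemma---a linear map $T$ with $\ker T\cap C=\{0\}$ sends a closed, convex, line-free cone to a line-free cone---so that each case reduces to a one-line kernel verification; your sum trick $c_1+c_2\in\ker T\cap C=\{0\}$ is the paper's $u+u'=v+v'$ manipulation packaged once instead of thrice, and it does not even use the line-freeness of $\pi_2(C_1)$ and $\pi_2(C_2)$ demanded by the definition of opposition, only the intersection clause. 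What your approach additionally buys is topological closedness of the image: the paper's definition of a cone requires closedness, and its proof of this lemma delegates the ``cone'' part to the proof of Lemma~\ref{thm:cone-product}, where closedness is automatic because $(u,v)\mapsto(u,v-u)$ is an invertible linear change of coordinates applied to a product of closed sets---but nothing is said for the non-injective projection $\tau$, under which the image of a closed convex cone can fail to be closed when the kernel condition fails (e.g., the positive semidefinite cone $\{(x,y,z):x,z\geq0,\ xz\geq y^2\}$ projects onto $\{(x,y):x>0\}\cup\{(0,0)\}$). Your properness argument, correctly reusing the ray facts recorded in the proof of Lemma~\ref{thm:bounded-affine-intersection} and powered by the very same kernel condition, is exactly the ingredient needed to close this point, so your write-up is, if anything, more watertight than the paper's per-case computation.
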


\begin{proof}
\emph{(i)\/}~The proof that $\tau(C_1\star C_2)$ is a cone is similar to
the proof for the case of~$C_1 \star C_2$ in Lemma~\ref{thm:cone-product}.
We show that it is line-free: assume there is $w \in \tau(C_1\star C_2)$
such that $-w \in \tau(C_1\star C_2)$; we proceed to prove that $w=0$.  By
assumption, there exist $u,u' \in C_1$ and $v,v' \in C_2$ such that $w
= (\pi_2(u), v-u)$ and $-w = (\pi_2(u'), v'-u')$, and therefore such that
$(\pi_2(u), v-u) = - (\pi_2(u'), v'-u')$.  Then $\pi_2(u)=-\pi_2(u')$
implies $\pi_2(u)=\pi_2(u')=0$, because $\pi_2(C_1)$ is line-free.  Next,
$v-u=-(v'-u')$ implies $\pi_2(v-u)=\pi_2(-(v'-u'))$, and therefore
$\pi_2(v)=-\pi_2(v')$, because $\pi_2(u)=\pi_2(u')=0$.  It follows that
$\pi_2(v)=\pi_2(v')=0$ because $\pi_2(C_2)$ is line-free.  Next, observe
that $v+v'\in C_2\cap(\set R^k\times\{0\}^{n-k})$, because
$\pi_2(v)=\pi_2(v')=0$, and that $u+u'\in C_1\cap(\set
R^k\times\{0\}^{n-k})$, because $\pi_2(u)=\pi_2(u')=0$.  From
$v-u=-(v'-u')$ follows $v+v'=u+u' \in C_1\cap C_2\cap(\set
R^k\times\{0\}^{n-k})$.  As $C_1$ and~$C_2$ are in opposition with respect
to the first $k$ variables, both $v+v'$ and $u+u'$ are zero.  Since $C_1$,
resp.~$C_2$, is line-free, this finally implies that $u=u'=0$, resp.~that
$v=v'=0$, thus that~$w=0$.

\emph{(ii)\/}~The proof that $\tau(C_1 \starkn C_2)$ is a cone is similar
to the proof for the case of~$C_1 \starkn C_2$ in
Lemma~\ref{thm:cone-product}.  We show that it is line-free: assume there
is $w \in \tau(C_1 \starkn C_2)$ such that $-w \in \tau(C_1 \starkn C_2)$;
we proceed to prove that $w=0$.  By assumption, there exist $u,u' \in
C_1$ and $v,v' \in C_2$ such that $w = (\pi_2(v), \pi_1(v)-u)$ and $-w =
(\pi_2(v'), \pi_1(v')-u')$, and therefore such that $(\pi_2(v), \pi_1(v)-u)
= - (\pi_2(v'), \pi_1(v')-u')$.  Then $\pi_2(v)=-\pi_2(v')$ implies
$\pi_2(v)=\pi_2(v')=0$, because $\pi_2(C_2)$ is line-free.  Next,
$\pi_1(v)-u=-(\pi_1(v')-u')$ implies $\pi_1(v+v')=u+u'$.  Therefore, $v+v'
= \pi_1(v+v') + \pi_2(v+v') = (u+u') + 0$.  As $C_1\times\{0\}^{n-k}$
and~$C_2$ are in opposition with respect to the first $k$ variables, both
$v+v'$ and $u+u'$ are zero.  Since $C_1$, resp.~$C_2$, is line-free, this
finally implies that $u=u'=0$, resp.~that $v=v'=0$, thus that~$w=0$.

\emph{(iii)\/}~The proof that $\tau(C_1 \starnk C_2)$ is a cone is similar
to the proof for the case of~$C_1 \starnk C_2$ in
Lemma~\ref{thm:cone-product}.  We show that it is line-free: assume there
is $w \in \tau(C_1 \starnk C_2)$ such that $-w \in \tau(C_1 \starnk C_2)$;
we proceed to prove that $w=0$.  By assumption, there exist $u,u' \in
C_1$ and $v,v' \in C_2$ such that $w = (\pi_2(u), v-\pi_1(u))$ and $-w =
(\pi_2(u'), v'-\pi_1(u'))$, and therefore such that $(\pi_2(u), v-\pi_1(u))
= - (\pi_2(u'), v'-\pi_1(u'))$.  Then $\pi_2(u)=-\pi_2(u')$ implies
$\pi_2(u)=\pi_2(u')=0$, because $\pi_2(C_1)$ is line-free.  Next,
$v-\pi_1(u)=-(v'-\pi_1(u'))$ implies $\pi_1(u+u')=v+v'$.  Therefore, $u+u'
= \pi_1(u+u') + \pi_2(u+u') = (v+v') + 0$.  As $C_1$
and~$C_2\times\{0\}^{n-k}$ are in opposition with respect to the first $k$
variables, both $u+u'$ and $v+v'$ are zero.  Since $C_1$, resp.~$C_2$, is
line-free, this finally implies that $u=u'=0$, resp.~that $v=v'=0$, thus
that~$w=0$.
\end{proof}

Hence, for any two line-free cones $C_1,C_2$ of $\set R^n$ or $\set R^k$
and with the relevant cones in opposition with respect to the first $k$
variables (as required), each of $\bK_{\tau(C_1\star C_2)}\langle\langle
X_{n\setminus k},Y_n\rangle\rangle$, $\bK_{\tau(C_1\starkn
  C_2)}\langle\langle X_{n\setminus k},Y_k\rangle\rangle$, and
$\bK_{\tau(C_1\starnk C_2)}\langle\langle X_{n\setminus k},Y_k\rangle\rangle$
is a well-defined ring, and a subring of, respectively, $\bK_{C_1\star
  C_2}\langle\langle X_n,Y_n\rangle\rangle$, $\bK_{C_1\starkn
  C_2}\langle\langle X_n,Y_k\rangle\rangle$, or $\bK_{C_1\starnk
  C_2}\langle\langle X_n,Y_k\rangle\rangle$.

The following lemma can now be viewed as a generalization of
Lemma~\ref{thm:hadam-as-res} above.

\begin{lem}\label{thm:spec-hadam-as-res}
\emph{(i)} Let $C_1,C_2\subseteq\set R^n$ be two cones that are in
opposition with respect to the first $k$ variables.  Let $f\in
\bK_{C_1}\langle\langle X_n\rangle\rangle$, $g\in \bK_{C_2}\langle\langle
X_n\rangle\rangle$, and $\alpha_1,\dots,\alpha_k,\beta_1,\dots,\beta_k\in \bK$ satisfy: for
each $1\leq j\leq k$, $\alpha_j\neq0$ if~$H_j^n(f)$ and $\beta_j\neq0$
if~$H_j^n(g)$.  Then,
\begin{multline}\label{eq:substd-hadam-as-res}
(f\odot g)\Bigr|_{x_1=\alpha_1\beta_1,\dots,x_k=\alpha_k\beta_k} = \\
\Res_{y_1,\dots,y_n}
  \frac1{y_1\cdots y_n}
  f\left(\frac{\alpha_1}{y_1},\dots,\frac{\alpha_k}{y_k},\frac{x_{k+1}}{y_{k+1}},\dots,\frac{x_n}{y_n}\right)
  g(\beta_1y_1,\dots,\beta_ky_k,y_{k+1},\dots,y_n) ,
\end{multline}
where the argument of the residue is understood as a product in $\bK_{\tau(C_1\star
  C_2)}\langle\langle X_{n\setminus k},Y_n\rangle\rangle$.

\emph{(ii)} For~$k\leq n$, let $C_1\subseteq\set R^k$ and $C_2\subseteq\set
R^n$ be two line-free cones, such that $C_1\times\{0\}^{n-k}$ and~$C_2$ are
in opposition with respect to the first $k$ variables.  Let $f\in
\bK_{C_1}\langle\langle X_k\rangle\rangle$, $g\in \bK_{C_2}\langle\langle
X_n\rangle\rangle$, and $\alpha_1,\dots,\alpha_k,\beta_1,\dots,\beta_k\in \bK$ satisfy: for
each $1\leq j\leq k$, $\alpha_j\neq0$ if~$H_j^k(f)$ and $\beta_j\neq0$
if~$H_j^n(g)$.
Then,
\begin{multline}\label{eq:substd-semi-hadam-k-n-as-res}
(f \odotkn g)\Bigr|_{x_1=\alpha_1\beta_1,\dots,x_k=\alpha_k\beta_k} = \\
\Res_{y_1,\dots,y_k}
  \frac1{y_1\cdots y_k}
  f\left(\frac{\alpha_1}{y_1},\dots,\frac{\alpha_k}{y_k}\right)
  g(\beta_1y_1,\dots,\beta_ky_k,x_{k+1},\dots,x_n) ,
\end{multline}
where the argument of the residue is understood as a product in $\bK_{\tau(C_1 \starkn
  C_2)}\langle\langle X_{n\setminus k},Y_k\rangle\rangle$.

\emph{(iii)} For~$k\leq n$, let $C_1\subseteq\set R^n$ and
$C_2\subseteq\set R^k$ be two line-free cones, such that $C_1$
and~$C_2\times\{0\}^{n-k}$ are in opposition with respect to the first $k$
variables.  Let $f\in \bK_{C_1}\langle\langle X_n\rangle\rangle$, $g\in
\bK_{C_2}\langle\langle X_k\rangle\rangle$, and
$\alpha_1,\dots,\alpha_k,\beta_1,\dots,\beta_k\in \bK$ satisfy: for each $1\leq j\leq k$,
$\alpha_j\neq0$ if~$H_j^n(f)$ and $\beta_j\neq0$ if~$H_j^k(g)$.
Then,
\begin{multline}\label{eq:substd-semi-hadam-n-k-as-res}
(f \odotnk g)\Bigr|_{x_1=\alpha_1\beta_1,\dots,x_k=\alpha_k\beta_k} = \\
\Res_{y_1,\dots,y_k}
  \frac1{y_1\cdots y_k}
  f\left(\frac{\alpha_1}{y_1},\dots,\frac{\alpha_k}{y_k},x_{k+1},\dots,x_n\right)
  g(\beta_1y_1,\dots,\beta_ky_k) ,
\end{multline}
where the argument of the residue is understood as a product in $\bK_{\tau(C_1 \starnk
  C_2)}\langle\langle X_{n\setminus k},Y_k\rangle\rangle$.
\end{lem}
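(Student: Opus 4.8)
The plan is to prove part~(i) in full and then to read off parts~(ii) and~(iii) by the same bookkeeping, exactly as Lemma~\ref{thm:hadam-as-res} was obtained from its three constructions. Throughout I write $f=\sum_u f_u\,x_1^{u_1}\cdots x_n^{u_n}$ and $g=\sum_v g_v\,x_1^{v_1}\cdots x_n^{v_n}$, with the convention that $f_u$ and~$g_v$ vanish outside the respective finite unions of translated cones. The whole argument rests on combining three ingredients already established: Lemma~\ref{thm:opposing-cone-product} to know the target rings are well-defined, Lemma~\ref{lem:wf-spec-hadam} to know the left-hand sides are well-defined, and a residue computation copied from~\eqref{eq:hadam-calc}.

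First I would check that the argument of the residue in~\eqref{eq:substd-hadam-as-res} is a genuine element of $\bK_{\tau(C_1\star C_2)}\langle\langle X_{n\setminus k},Y_n\rangle\rangle$, which is a bona fide ring because $\tau(C_1\star C_2)$ is a line-free cone by Lemma~\ref{thm:opposing-cone-product}(i). Specializing the first $k$ variables of $f(x_1/y_1,\dots,x_n/y_n)$ at $\alpha_1,\dots,\alpha_k$ produces $\sum_u f_u\,\alpha_1^{u_1}\cdots\alpha_k^{u_k}\,x_{k+1}^{u_{k+1}}\cdots x_n^{u_n}\,y_1^{-u_1}\cdots y_n^{-u_n}$, whose support has direction $\{(\pi_2(u),-u):u\in C_1\}\subseteq\tau(C_1\star C_2)$ upon choosing $v=0\in C_2$; likewise $g(\beta_1 y_1,\dots,\beta_k y_k,y_{k+1},\dots,y_n)$ has support direction $\{(0,v):v\in C_2\}\subseteq\tau(C_1\star C_2)$ upon choosing $u=0\in C_1$. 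The two substitutions are legitimate precisely under the stated hypotheses: a factor $\alpha_j^{u_j}$ (resp.\ $\beta_j^{v_j}$) fails to be defined only when a vanishing $\alpha_j$ (resp.\ $\beta_j$) meets a negative exponent, i.e.\ only when $H_j^n(f)$ (resp.\ $H_j^n(g)$) holds, which the hypotheses exclude by demanding $\alpha_j\neq0$ (resp.\ $\beta_j\neq0$) in that case.

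Next I would run the residue computation verbatim along the lines of~\eqref{eq:hadam-calc}: multiplying the two series and dividing by $y_1\cdots y_n$ collects, as the coefficient of $y_1^{i_1-1}\cdots y_n^{i_n-1}$, exactly the pairs $(u,v)$ with $v-u=(i_1,\dots,i_n)$; applying $\Res_{y_1,\dots,y_n}$ selects $i_1=\dots=i_n=0$, forcing $v=u$, and leaves $\sum_u f_u g_u\,\prod_{j=1}^k(\alpha_j\beta_j)^{u_j}\,x_{k+1}^{u_{k+1}}\cdots x_n^{u_n}$, which is $(f\odot g)|_{x_1=\alpha_1\beta_1,\dots,x_k=\alpha_k\beta_k}$. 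That left-hand side is itself well-defined because the hypotheses force $\alpha_j\beta_j\neq0$ whenever $H_j^{n\leq n}(f,g)$, so Lemma~\ref{lem:wf-spec-hadam}(i) applies. Parts~(ii) and~(iii) then follow identically, invoking Lemma~\ref{thm:opposing-cone-product}(ii)--(iii) for the rings $\bK_{\tau(C_1\starkn C_2)}\langle\langle X_{n\setminus k},Y_k\rangle\rangle$ and $\bK_{\tau(C_1\starnk C_2)}\langle\langle X_{n\setminus k},Y_k\rangle\rangle$, the corresponding support directions $\{(\pi_2(v),\pi_1(v)-u)\}$ and $\{(\pi_2(u),v-\pi_1(u))\}$, and Lemma~\ref{lem:wf-spec-hadam}(ii)--(iii) for well-definedness of the respective left-hand sides.

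The hard part will not be the residue bookkeeping but the alignment of three a priori distinct finiteness facts: that each specialized factor on the right is a bona fide series over the projected cone (line-freeness from Lemma~\ref{thm:opposing-cone-product}), that the coefficientwise specialization on the left converges (the bounded-intersection Lemma~\ref{thm:bounded-affine-intersection} feeding Lemma~\ref{lem:wf-spec-hadam}), and that these two notions extract the \emph{same} coefficient of each surviving monomial $x_{k+1}^{i_{k+1}}\cdots x_n^{i_n}$. Concretely, after $\Res_y$ has forced $v=u$, the coefficient of a fixed $x_{k+1}^{i_{k+1}}\cdots x_n^{i_n}$ is the sum over $(u_1,\dots,u_k)$ of $f_u g_u\,\prod_{j=1}^k(\alpha_j\beta_j)^{u_j}$, and I would have to confirm that its index set $\{(u_1,\dots,u_k):(u_1,\dots,u_k,i_{k+1},\dots,i_n)\in\operatorname{supp}f\cap\operatorname{supp}g\}$ is the very bounded set whose finiteness the opposition hypothesis guarantees, so that the two evaluations genuinely agree term by term.
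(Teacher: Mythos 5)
Your proposal is correct and takes essentially the same approach as the paper's proof: both establish well-definedness of the left-hand side via Lemma~\ref{lem:wf-spec-hadam} (after noting that $\alpha_j\beta_j\neq0$ whenever $H_j^{n\leq n}(f,g)$), place the two specialized factors in $\bK_{\tau(C_1\star C_2)}\langle\langle X_{n\setminus k},Y_n\rangle\rangle$ using Lemma~\ref{thm:opposing-cone-product}, and run the residue calculation modeled on~\eqref{eq:hadam-calc}, with $\Res_{y_1,\dots,y_n}$ forcing $v=u$, then treat parts (ii) and~(iii) as analogous. Your closing worry about aligning the two finite coefficient sums is already settled by Lemma~\ref{lem:wf-spec-hadam} (via Lemma~\ref{thm:bounded-affine-intersection}), since once both sides are well-defined series the surviving sums are literally identical, which is also how the paper implicitly disposes of it.
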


Before the proof, observe that \eqref{eq:substd-semi-hadam-k-n-as-res}
and~\eqref{eq:substd-semi-hadam-n-k-as-res} cannot just be obtained as
specializations of~\eqref{eq:substd-hadam-as-res}.

\begin{proof}
To prove~\eqref{eq:substd-hadam-as-res}, assume that
$\alpha_1,\dots,\alpha_k,\beta_1,\dots,\beta_k\in \bK$ satisfy: for each $1\leq j\leq k$,
$\alpha_j\neq0$ if~$H_j^n(f)$ and $\beta_j\neq0$ if~$H_j^k(g)$.  As a first
consequence, for each $1\leq j\leq k$, $\alpha_j\beta_j\neq0$ if~$H_j^{n\leq
  n}(f,g)$, so that by Lemma~\ref{lem:wf-spec-hadam}, the left-hand side
of~\eqref{eq:substd-hadam-as-res} is well-defined.  As a second
consequence,
$f\bigl(\frac{\alpha_1}{y_1},\dots,\frac{\alpha_k}{y_k},\frac{x_{k+1}}{y_{k+1}},\dots,\frac{x_n}{y_n}\bigr)$
and $g(\beta_1y_1,\dots,\beta_ky_k,y_{k+1},\dots,y_n)$ are well defined, too, and
are thus in $\bK_{\tau(C_1\star C_2)}\langle\langle X_{n\setminus
  k},Y_n\rangle\rangle$.  In a way similar to~\eqref{eq:hadam-calc} in the
proof of Lemma~\ref{thm:hadam-as-res}, we reformulate their product~$h$ in
the form:
\begin{multline*}
h =
  \frac1{y_1\cdots y_n}
  \biggl(\sum_{u_1,\dots,u_n} f_{u_1,\dots,u_n} \frac{\alpha_1^{u_1}\cdots \alpha_k^{u_k} x_{k+1}^{u_{k+1}}\cdots x_n^{u_n}}{y_1^{u_1}\cdots y_n^{u_n}}\biggr)
  \biggl(\sum_{v_1,\dots,v_n} g_{v_1,\dots,v_n} \beta_1^{v_1} \cdots \beta_k^{v_k} y_1^{v_1}\cdots y_n^{v_n}\biggr) = \\
\sum_{i_1,\dots,i_n} \biggl(
  \sum_{(v_1,\dots,v_n)-(u_1,\dots,u_n)=(i_1,\dots,i_n)}
    f_{u_1,\dots,u_n}g_{v_1,\dots,v_n} (\alpha_1^{u_1}\beta_1^{v_1})\cdots (\alpha_k^{u_k}\beta_k^{v_k}) x_{k+1}^{u_{k+1}}\cdots x_n^{u_n}
  \biggr) y_1^{i_1-1}\cdots y_n^{i_n-1} .
\end{multline*}
Like in Lemma~\ref{thm:hadam-as-res}, extracting residues with respect to
$y_1,\dots,y_n$ selects the term for which $i_1 = \dots = i_n = 0$, forcing
$(v_1,\dots,v_n) = (u_1,\dots,u_n)$, which
yields~\eqref{eq:substd-hadam-as-res}.

The proofs or \eqref{eq:substd-semi-hadam-k-n-as-res} and
\eqref{eq:substd-semi-hadam-n-k-as-res} are similar, basing on analogous
calculations.
\end{proof}

Likewise, the following theorem can now be viewed as a generalization of
Lemma~\ref{thm:ppart-as-res} above.  For the proof, we proceed like for the
proof of that lemma, invoking Lemma~\ref{thm:spec-hadam-as-res} in place of
Lemma~\ref{thm:hadam-as-res}.

\begin{thm}\label{thm:spec-ppart-as-res}
For every line-free cone $C\subseteq\set R^n$ that is in opposition
to~$\Rplus^n$ with respect to the first $k$ coordinates, for every $\phi\in
\bK_C\langle\langle X_n\rangle\rangle$, and for every
$\lambda_1,\dots,\lambda_k\in \bK$, we have:
\begin{align}
\bigl([x_1^>\cdots x_k^>]&\phi\bigr)\Bigr|_{x_1=\lambda_1,\dots,x_k=\lambda_k} \nonumber \\
{} &= \label{eq:spec-ppart-as-res-plain-phi}
  \Res_{y_1,\dots,y_k}
    \frac1{y_1\cdots y_k}
    \left[\frac{\frac{\lambda_1}{y_1}\cdots\frac{\lambda_k}{y_k}}{(1-\frac{\lambda_1}{y_1})\cdots(1-\frac{\lambda_k}{y_k})}\right]_{\Rminus^k}
    \phi(y_1,\dots,y_k,x_{k+1},\dots,x_n) \\
{} &= \label{eq:spec-ppart-as-res-substd-phi}
  \Res_{y_1,\dots,y_k}
    \frac1{y_1\cdots y_k}
    \phi\left(\frac1{y_1},\dots,\frac1{y_k},x_{k+1},\dots,x_n\right)
    \left[\frac{\lambda_1y_1\cdots \lambda_ky_k}{(1-\lambda_1y_1)\cdots(1-\lambda_ky_k)}\right]_{\Rplus^k} ,
\end{align}
where the brackets around rational functions in
\eqref{eq:spec-ppart-as-res-plain-phi}
and~\eqref{eq:spec-ppart-as-res-substd-phi} denote taking their expansions
in, respectively, $\bK_{\Rminus^k}\langle\langle Y_k\rangle\rangle$ and
$\bK_{\Rplus^k}\langle\langle Y_k\rangle\rangle$.
\end{thm}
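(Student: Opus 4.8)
The plan is to mirror the proof of Lemma~\ref{thm:ppart-as-res}, replacing every appeal to the unspecialized residue identities of Lemma~\ref{thm:hadam-as-res} by the specialized ones of Lemma~\ref{thm:spec-hadam-as-res}. Concretely, I fix $\psi\in\bK_{\Rplus^k}\langle\langle X_k\rangle\rangle$ to be the geometric-series expansion of $\frac{x_1\cdots x_k}{(1-x_1)\cdots(1-x_k)}$, so that, exactly as in Lemma~\ref{thm:ppart-as-res}, the positive part can be written in the two ways $[x_1^>\cdots x_k^>]\phi = \psi\odotkn\phi = \phi\odotnk\psi$. The whole point is that these two presentations survive the substitution $x_1=\lambda_1,\dots,x_k=\lambda_k$, and the specialized residue formulas then produce the two announced expressions.

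For~\eqref{eq:spec-ppart-as-res-plain-phi} I would start from $[x_1^>\cdots x_k^>]\phi = \psi\odotkn\phi$ and apply Lemma~\ref{thm:spec-hadam-as-res}(ii) with $f=\psi$ (cone $\Rplus^k$), $g=\phi$ (cone $C$), and the specialization data $\alpha_j=\lambda_j$, $\beta_j=1$. Then $x_j=\alpha_j\beta_j=\lambda_j$, the factor $g(\beta_1y_1,\dots,\beta_ky_k,x_{k+1},\dots,x_n)$ becomes $\phi(y_1,\dots,y_k,x_{k+1},\dots,x_n)$, and $f(\alpha_1/y_1,\dots,\alpha_k/y_k)$ becomes $\psi(\lambda_1/y_1,\dots,\lambda_k/y_k)$. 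The substitution $x_j\mapsto\lambda_j/y_j$ carries the support $\Rplus^k$ of $\psi$ into $\Rminus^k$ in the $y$-variables, which is exactly why the bracket in~\eqref{eq:spec-ppart-as-res-plain-phi} is expanded over $\Rminus^k$. Symmetrically, for~\eqref{eq:spec-ppart-as-res-substd-phi} I would use $[x_1^>\cdots x_k^>]\phi = \phi\odotnk\psi$ and Lemma~\ref{thm:spec-hadam-as-res}(iii) with $f=\phi$ (cone $C$), $g=\psi$ (cone $\Rplus^k$), and $\alpha_j=1$, $\beta_j=\lambda_j$; here $\psi(\lambda_1y_1,\dots,\lambda_ky_k)$ keeps its support in $\Rplus^k$, matching the bracket expanded over $\Rplus^k$.

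Two verifications then carry the argument. First, the hypotheses must line up: parts (ii) and (iii) of Lemma~\ref{thm:spec-hadam-as-res} both require (opposition being symmetric in its two cones) that $C$ and $\Rplus^k\times\{0\}^{n-k}$ be in opposition with respect to the first $k$ variables. Since intersecting with $\set R^k\times\{0\}^{n-k}$ kills the last $n-k$ coordinates, on which $\Rplus^n$ and $\Rplus^k\times\{0\}^{n-k}$ agree, the condition $C\cap\Rplus^n\cap(\set R^k\times\{0\}^{n-k})=\{0\}^n$ assumed in the theorem is identical to $C\cap(\Rplus^k\times\{0\}^{n-k})\cap(\set R^k\times\{0\}^{n-k})=\{0\}^n$; together with line-freeness of $\pi_2(C)$, which is part of being in opposition to $\Rplus^n$, this is precisely what both parts need. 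Second, the specialization at arbitrary $\lambda_j$ must be legitimate: the nonzero constraints in Lemma~\ref{thm:spec-hadam-as-res} only bite through the predicate $H_j$ applied to $\psi$, but $\psi$ has all exponents at least~$1$, so $H_j^k(\psi)$ is false for every $j$; the constraints attached to $\psi$ are therefore vacuous, while the constraints attached to $\phi$ are neutralized by taking the companion parameter equal to~$1$. This is what licenses arbitrary $\lambda_1,\dots,\lambda_k\in\bK$.

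The only genuinely delicate point is bookkeeping, not depth: I must make sure the early specialization introduces neither a division by zero nor a nonconvergent coefficient sum. Both dangers are already handled by Lemma~\ref{lem:wf-spec-hadam} and are built into Lemma~\ref{thm:spec-hadam-as-res}, so once the opposition hypothesis is translated as above and the vacuity of the $H_j^k(\psi)$ constraints is recorded, the two residue identities follow by the same coefficient-matching computation as in~\eqref{eq:hadam-calc}. I expect the main obstacle to be purely notational: keeping straight which of the two symmetric roles ($\odotkn$ versus $\odotnk$, and the corresponding choice $\alpha_j=\lambda_j,\beta_j=1$ versus $\alpha_j=1,\beta_j=\lambda_j$) yields the $\Rminus^k$-expansion and which yields the $\Rplus^k$-expansion, so that the two displayed formulas are attached to the correct cones rather than swapped.
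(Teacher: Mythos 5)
Your proposal is correct and takes essentially the same route as the paper: the paper's proof likewise fixes $\psi$ to the geometric-series expansion of $\frac{x_1\cdots x_k}{(1-x_1)\cdots(1-x_k)}$ in $\bK_{\Rplus^k}\langle\langle X_k\rangle\rangle$, writes the specialized positive part as $(\psi\odotkn\phi)\bigr|_{x_1=\lambda_1,\dots,x_k=\lambda_k}=(\phi\odotnk\psi)\bigr|_{x_1=\lambda_1,\dots,x_k=\lambda_k}$, and applies Lemma~\ref{thm:spec-hadam-as-res}\,(ii) with $\alpha_i=\lambda_i$, $\beta_i=1$ and (iii) with $\alpha_i=1$, $\beta_i=\lambda_i$, observing exactly as you do that the $H_j$-constraints are vacuous because $\psi$ has no monomials with negative exponents and that opposition of $C$ to $\Rplus^n$ transfers to $\Rplus^k\times\{0\}^{n-k}$. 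Your cone bookkeeping, including which of the two symmetric applications yields the $\Rminus^k$-expansion and which the $\Rplus^k$-expansion, matches the paper's proof.
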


\begin{proof}
Fix $C'$~to $\Rplus^k$ and $\psi\in \bK_{C'}\langle\langle X_k\rangle\rangle$
to the geometric series expansion of $\frac {x_1\cdots x_k}
{(1-x_1)\cdots(1-x_k)}$.  The desired positive part can then be represented
in two ways as variants of Hadamard products:
\begin{equation*}
\bigl([x_1^>\cdots x_k^>]\phi\bigr)\bigr|_{x_1=\lambda_1,\dots,x_k=\lambda_k} =
  \bigl(\psi \odotkn \phi\bigr)\bigr|_{x_1=\lambda_1,\dots,x_k=\lambda_k} =
  \bigl(\phi \odotnk \psi\bigr)\bigr|_{x_1=\lambda_1,\dots,x_k=\lambda_k} .
\end{equation*}
We then use Lemma~\ref{thm:spec-hadam-as-res} twice, which applies as
$C$~is in opposition to~$\Rplus^n$ with respect to the first $k$
coordinates, thus to $C'\times\{0\}^{n-k}$ as well:

\begin{itemize}

\item Firstly, Lemma~\ref{thm:spec-hadam-as-res}\,(ii) can be used with
  $C_1=C'$, $C_2=C$, $f=\psi$, $g=\phi$, $\alpha_i=\lambda_i$, $\beta_i=1$, without
  restriction on~$\lambda_i\in \bK$ as $\psi$~involves no monomial with
  negative exponent.  Equation~\eqref{eq:substd-semi-hadam-k-n-as-res} then
  reads
  \begin{equation*}
  \bigl([x_1^>\cdots x_k^>]\phi\bigr)\Bigr|_{x_1=\lambda_1,\dots,x_k=\lambda_k} =
    \Res_{y_1,\dots,y_k} \frac{1}{y_1\cdots y_k}
    \psi\left(\frac{\lambda_1}{y_1},\dots,\frac{\lambda_k}{y_k}\right)
    \phi(y_1,\dots,y_k,x_{k+1},\dots,x_n) ,
  \end{equation*}
  where $\psi\bigl(\frac{\lambda_1}{y_1},\dots,\frac{\lambda_k}{y_k}\bigr)$
  is in $\bK_{\tau((C' \times \{0\}^{n-k})\starkn\{0\}^n)}\langle\langle
  X_{n\setminus k},Y_k\rangle\rangle$, and thus in $\bK_{-C'}\langle\langle
  Y_k\rangle\rangle$, hence \eqref{eq:spec-ppart-as-res-plain-phi} and the
  announced expansion.

\item Secondly, Lemma~\ref{thm:spec-hadam-as-res}\,(iii) can be used with
  $C_1=C$, $C_2=C'$, $f=\phi$, $g=\psi$, $\alpha_i=1$, $\beta_i=\lambda_i$, without
  restriction on~$\lambda_i\in \bK$ as $\psi$~involves no monomial with
  negative exponent.  Equation~\eqref{eq:substd-semi-hadam-k-n-as-res} then
  reads
  \begin{equation*}
  \bigl([x_1^>\cdots x_k^>]\phi\bigr)\Bigr|_{x_1=\lambda_1,\dots,x_k=\lambda_k} =
    \Res_{y_1,\dots,y_k} \frac{1}{y_1\cdots y_k}
    \phi\left(\frac{1}{y_1},\dots,\frac{1}{y_k},x_{k+1},\dots,x_n\right)
  \psi(\lambda_1y_1,\dots,\lambda_ky_k),
  \end{equation*}
  where $\psi(\lambda_1y_1,\dots,\lambda_ky_k)$ is in $\bK_{\tau(0\starnk (C'
    \times \{0\}^{n-k}))}\langle\langle X_{n\setminus
    k},Y_k\rangle\rangle$, and thus in $\bK_{C'}\langle\langle
  Y_k\rangle\rangle$, hence \eqref{eq:spec-ppart-as-res-substd-phi} and the
  announced expansion.

\end{itemize}
\end{proof}

The following variant formulation of Theorem~\ref{thm:spec-ppart-as-res}
avoids to potentially get a tautology when some of the specialization point
is zero.

\begin{thm}\label{thm:spec-ppart-as-res'}
For every line-free cone $C\subseteq\set R^n$ that is in opposition
to~$\Rplus^n$ with respect to the first $k$ coordinates, for every $\phi\in
\bK_C\langle\langle X_n\rangle\rangle$, and for every
$\lambda_1,\dots,\lambda_k\in \bK$, we have:
\begin{align}
\biggl(\frac1{x_1\dots x_k} & [x_1^>\cdots x_k^>] \phi\biggr)\biggr|_{x_1=\lambda_1,\dots,x_k=\lambda_k} \nonumber \\
{} &= \label{eq:spec-ppart-as-res-plain-phi'}
  \Res_{y_1,\dots,y_k}
    \frac1{y_1\cdots y_k}
    \left[\frac{\frac1{y_1\cdots y_k}}{(1-\frac{\lambda_1}{y_1})\cdots(1-\frac{\lambda_k}{y_k})}\right]_{\Rminus^k}
    \phi(y_1,\dots,y_k,x_{k+1},\dots,x_n) \\
{} &= \label{eq:spec-ppart-as-res-substd-phi'}
  \Res_{y_1,\dots,y_k}
    \frac1{y_1\cdots y_k}
    \phi\left(\frac1{y_1},\dots,\frac1{y_k},x_{k+1},\dots,x_n\right)
    \left[\frac{y_1\cdots y_k}{(1-\lambda_1y_1)\cdots(1-\lambda_ky_k)}\right]_{\Rplus^k} ,
\end{align}
where the brackets around rational functions in
\eqref{eq:spec-ppart-as-res-plain-phi'}
and~\eqref{eq:spec-ppart-as-res-substd-phi'} denote taking their expansions
in, respectively, $\bK_{\Rminus^k}\langle\langle Y_k\rangle\rangle$ and
$\bK_{\Rplus^k}\langle\langle Y_k\rangle\rangle$.
\end{thm}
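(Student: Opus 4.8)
The plan is to mimic the proof of Theorem~\ref{thm:spec-ppart-as-res} almost verbatim, the only change being to fold the prefactor $\tfrac1{x_1\cdots x_k}$ into the geometric series: where that proof uses the expansion of $\tfrac{x_1\cdots x_k}{(1-x_1)\cdots(1-x_k)}$, here I would use $\tfrac1{(1-x_1)\cdots(1-x_k)}$ and carry along the resulting monomial down-shift. The reason this avoids a tautology is structural: multiplying by $\tfrac1{x_1\cdots x_k}$ turns $[x_1^>\cdots x_k^>]\phi$, whose exponents in $x_1,\dots,x_k$ are all $\geq1$, into a series whose exponents in those variables are all $\geq0$, and the latter can genuinely be evaluated at $x_j=0$; thus, unlike in Theorem~\ref{thm:spec-ppart-as-res}, taking $\lambda_j=0$ does not force both sides to vanish.

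Writing $\phi=\sum a_{i_1,\dots,i_n}x_1^{i_1}\cdots x_n^{i_n}$, the left-hand side equals $\sum_{i_1,\dots,i_k\geq1}a_{i_1,\dots,i_n}\,\lambda_1^{i_1-1}\cdots\lambda_k^{i_k-1}\,x_{k+1}^{i_{k+1}}\cdots x_n^{i_n}$, where for $\lambda_j=0$ only the terms with $i_j=1$ survive. I would match this explicit series against each residue. In $\bK_{\Rminus^k}\langle\langle Y_k\rangle\rangle$ one has $\tfrac{1/(y_1\cdots y_k)}{\prod_\ell(1-\lambda_\ell/y_\ell)}=\sum_{m_1,\dots,m_k\geq0}\lambda_1^{m_1}\cdots\lambda_k^{m_k}\,y_1^{-m_1-1}\cdots y_k^{-m_k-1}$, an expansion that stays nonzero when some $\lambda_\ell=0$; in $\bK_{\Rplus^k}\langle\langle Y_k\rangle\rangle$ one has $\tfrac{y_1\cdots y_k}{\prod_\ell(1-\lambda_\ell y_\ell)}=\sum_{m_1,\dots,m_k\geq0}\lambda_1^{m_1}\cdots\lambda_k^{m_k}\,y_1^{m_1+1}\cdots y_k^{m_k+1}$. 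Forming the full integrand of~\eqref{eq:spec-ppart-as-res-plain-phi'}, resp.~\eqref{eq:spec-ppart-as-res-substd-phi'} — that is, multiplying such an expansion by $\tfrac1{y_1\cdots y_k}$ and by $\phi(y_1,\dots,y_k,x_{k+1},\dots,x_n)$, resp.~by $\phi(1/y_1,\dots,1/y_k,x_{k+1},\dots,x_n)$ — and then extracting $\Res_{y_1,\dots,y_k}$, which selects the coefficient of $y_1^{-1}\cdots y_k^{-1}$, forces $i_\ell=m_\ell+1$ for $\ell\leq k$ and reproduces exactly the sum above. This is the computation~\eqref{eq:hadam-calc} rerun with the extra down-shift carried by the monomial $y_1\cdots y_k$.

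The real work is to justify that these products and residues are well defined, uniformly in the $\lambda_j$ and in particular at $\lambda_j=0$. Here I would argue as in the proofs of Lemmas~\ref{thm:spec-hadam-as-res} and~\ref{thm:opposing-cone-product}: the first bracketed series is supported in $\Rminus^k\times\{0\}^{n-k}$ and the second in $\Rplus^k\times\{0\}^{n-k}$, while $\phi(y_1,\dots,y_k,x_{k+1},\dots,x_n)$ is supported in $C$ and $\phi(1/y_1,\dots,1/y_k,x_{k+1},\dots,x_n)$ in the reflection of $C$ in its first $k$ coordinates. Since $C$ is in opposition to $\Rplus^n$ with respect to the first $k$ variables, Lemma~\ref{thm:opposing-cone-product}, via Lemma~\ref{thm:bounded-affine-intersection}, guarantees that the relevant sum of supporting cones is again line-free; hence each integrand lies in a genuine ring of cone-supported series and, for every fixed monomial in $x_{k+1},\dots,x_n$, only finitely many tuples $(m_1,\dots,m_k)$ contribute to the residue. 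This legitimises the formal manipulation.

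The step I expect to be the main obstacle is precisely this uniform well-definedness at $\lambda_j=0$. For nonzero $\lambda_1,\dots,\lambda_k$ the two primed identities follow immediately from Theorem~\ref{thm:spec-ppart-as-res} by dividing both sides by $\lambda_1\cdots\lambda_k$, since the bracketed factors here are exactly $1/(\lambda_1\cdots\lambda_k)$ times those there and the rest of each integrand is unchanged; but that shortcut collapses to $0=0$ as soon as some $\lambda_j=0$. What must be checked, therefore, is that it is the opposition hypothesis — not any genericity of the $\lambda_j$ — that keeps the supporting cones line-free and the residue sums finite, so that the formulas remain meaningful and correct on that boundary. Once the cone bookkeeping of the previous paragraph is in place, this is exactly what it delivers.
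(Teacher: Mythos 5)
Your proposal is correct and is essentially the paper's own proof: the paper likewise replaces the kernel $\frac{x_1\cdots x_k}{(1-x_1)\cdots(1-x_k)}$ by the expansion $\tilde\psi$ of $\frac1{(1-x_1)\cdots(1-x_k)}$, absorbs the down-shift into $\tilde\phi=\phi/(x_1\cdots x_k)$, writes the left-hand side as the specialized products $\tilde\psi\odotkn\tilde\phi=\tilde\phi\odotnk\tilde\psi$, and applies Lemma~\ref{thm:spec-hadam-as-res} twice, noting that since $\tilde\psi$ has no negative exponents the specialization is unrestricted (in particular $\lambda_j=0$ is allowed) and that the factor $y_1\cdots y_k$ moves freely into or out of the brackets. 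Your explicit coefficient/residue matching and cone bookkeeping merely unfold what the cited lemma already packages, and your closing observation about the $\lambda_j\neq0$ shortcut collapsing to $0=0$ matches the paper's stated motivation for this variant of Theorem~\ref{thm:spec-ppart-as-res}.
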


\begin{proof}
In a way very similar to the proof of Theorem~\ref{thm:spec-ppart-as-res},
fix $C'$~to $\Rplus^k$ and $\tilde\psi\in \bK_{C'}\langle\langle
X_k\rangle\rangle$ to the geometric series expansion of $\frac 1
{(1-x_1)\cdots(1-x_k)}$.  Introduce as well $\tilde\phi = \phi/(x_1\dots
x_k)$.  The desired positive part can then be represented in two ways as
variants of Hadamard products:
\begin{equation*}
\biggl(\frac1{x_1\dots x_k} [x_1^>\cdots x_k^>]\phi\biggr)\biggr|_{x_1=\lambda_1,\dots,x_k=\lambda_k} =
  \bigl(\tilde\psi \odotkn \tilde\phi\bigr)\bigr|_{x_1=\lambda_1,\dots,x_k=\lambda_k} =
  \bigl(\tilde\phi \odotnk \tilde\psi\bigr)\bigr|_{x_1=\lambda_1,\dots,x_k=\lambda_k} .
\end{equation*}
The proof now follows the same lines as for
Theorem~\ref{thm:spec-ppart-as-res}, using
Lemma~\ref{thm:spec-hadam-as-res} twice: Equations
\eqref{eq:spec-ppart-as-res-plain-phi'}
and~\eqref{eq:spec-ppart-as-res-substd-phi'} follow after observing that
all specializations of any~$\lambda_i$ (in particular, to~0) are well
defined and that a factor $y_1\dots y_k$ can move freely into or out of a
bracket.
\end{proof}

\subsection{Generating series of walks as residues}
\label{sec:gf-as-res}

Now, we want to apply Theorem~\ref{thm:spec-ppart-as-res'} to find
representations as residues of the specializations of a walk
series~$Q(x,y;t)$ at $x=\alpha$ and~$y=\beta$ for numbers $\alpha$ and~$\beta$:
we need this in particular for $(\alpha,\beta) \in \{0,1\}^2$.
For technical reasons, below we will in fact need such representations
for more general $\alpha$ and~$\beta$,
namely elements of a field~$\bK$ of characteristic zero
whose elements have zero derivative with respect to $x$ and~$y$.

We will obtain a formula of the form
\begin{multline}\label{eq:spec-walk-series-as-res}
Q(\alpha,\beta;t) = \left(\frac1{xy}[x^>y^>]\phi\right)\biggr|_{x=\alpha,y=\beta} = \\
\Res_{x,y}
  \frac1{xy}
  \left[\frac{\bar x\bar y}{(1-\alpha\bar x)(1-\beta\bar y)}\right]_{\Rminus^2}
  \phi(x,y;t) =
\Res_{x,y}
  \frac1{xy}
    \phi(\bar x,\bar y;t)
    \left[\frac{xy}{(1-\alpha x)(1-\beta y)}\right]_{\Rplus^2} ,
\end{multline}
for some series~$\phi$ to be determined
and for residues that are residues of rational functions in~$\bK(x,y,t)$.
Our first step is to
express~$Q(x,y;t)$ as a positive-part extraction.  To this end,
Bousquet-Mélou and Mishna provide an appealing formula \cite[Prop.~8]{BMM},
\begin{equation}\label{eq:walk-series-as-mbm-ppart}
xy Q(x,y;t) = [x^>] [y^>] R(x,y;t) ,
\end{equation}
which represents the walk series as an extraction from the \emph{rational
  function\/}~$R(x,y;t) = N(x,y)/(1-tS(x,y))$.  Thus, their formula
requires some care to be used in our context: we know that rational
functions potentially admit several (distinct) positive parts in $\set Q[[x,y;t]]$,
depending on the cone used to expand it in~$\set Q^{\set Z^3}$.
So we need to determine a cone that results in the proper
combinatorial representation of~$Q$.
Additionally, instead of our direct positive-part extraction~$[x^>y^>]$
over~$\set Q^{\set Z^3}$
(and more generally~$\bK^{\set Z^3}$ after we specialize
at $x = \alpha$ and~$y = \beta$),
they use iterated positive-part extractions:
a first that operates coefficient-wise
on series in~$t$, namely $[y^>] : \set Q(x)[y,\bar y][[t]] \rightarrow \set
Q(x)[y][[t]]$; a second that operates coefficient-wise on series in $y$
and~$t$, namely $[x^>] : \set Q[x,\bar x][y][[t]] \rightarrow \set
Q[x,y][[t]]$.  Then, they make the crucial observation that the
(unambiguously defined) coefficients of $[y^>]R$ with respect to $y$
and~$t$ are not only in~$\set Q(x)$, but in fact for the set of~$R$ under
consideration more specifically in~$\set Q[x,\bar x]$, so that $[x^>]$~can
be applied.  At this point, it will be sufficient for us to determine a
cone~$C$ such that
\begin{equation}\label{eq:our-ppart-as-mbm-part}
[x^>y^>] \phi = [x^>] [y^>] R(x,y;t)
\end{equation}
for the series expansion~$\phi$ of~$R$ in $\set Q_C\langle\langle x,y,t\rangle\rangle$.

The cone~$C$ that will do is the cone~$\Gamma$ generated by the vectors
$(1,1,1)$, $(1,-1,1)$, $(-1,0,0)$, so that
elements of the ring~$\set Q_\Gamma[[x,y;t]]$
can only involve monomials $x^k y^m t^n$
for $k,m,n \in \set N$ satisfying $-n \leq m \leq n$ and~$k \leq n$.
This cone is
line-free, making $S_\Gamma := \set Q_\Gamma\langle\langle
x,y;t\rangle\rangle$ a well-defined ring.  Now, for each step set in
Table~\ref{tab:steps}, the product~$tS(x,y)$ involves only monomials
$x^ky^mt$ satisfying $-1 \leq k,m \leq 1$, making the rational
function~$1/\bigl(1-tS(x,y)\bigr)$ admit the expansion
$\sum_{n\geq0}S(x,y)^n t^n$ in~$S_\Gamma$.  In addition, the coefficients
$1/(x+\bar x)$ and~$1/(x+1+\bar x)$ that occur in~$N$ for some of the step
sets admit expansions in~$S_\Gamma$, so that by ring operations, the
rational function~$R$ admits an expansion in~$S_\Gamma$, henceforth
denoted~$\phi = [R]_\Gamma$.  Set $\Gamma' = \Rminus \times \{0\}$, another
line-free cone, so that we can introduce the ring $S_{\Gamma'} = \set
Q_{\Gamma'}\langle\langle x,y\rangle\rangle = \set Q[[\bar x]][x,y,\bar
  y]$.  As vector spaces, we have $S_\Gamma \subset \sum_{j\in\set Z}
S_{\Gamma'} t^j$.  With this observation, $\phi = \sum_{j\in\set N}
[NS^j]_{\Gamma'} t^j$, where the brackets are taken in~$S_{\Gamma'}$.
Therefore, our $[x^>y^>]\phi$ equals Bousquet-Mélou and Mishna's
$[x^>][y^>]R$ if and only if for any~$j\geq0$, our
$[x^>y^>][NS^j]_{\Gamma'}$ equals their $[x^>][y^>](NS^j)$.

We now fix~$j\geq0$.  There are two cases, depending on the step set in
Table~\ref{tab:steps}:

\begin{itemize}

\item\emph{Cases 1 to~16.}  The assumptions of
  Lemma~\ref{lem:same-pos-parts} below are satisfied.  The lemma proves
  that equality holds.

\item\emph{Cases 17 to~19.}  Both $N$ and~$S$ are finite sums in~$\set
  Q[x,\bar x,y,\bar y]$, so $[NS^j]_{\Gamma'}$~is just~$NS^j$, and the
  equality $[x^>y^>][NS^j]_{\Gamma'} = [x^>][y^>](NS^j)$ holds.

\end{itemize}

\begin{lem}\label{lem:same-pos-parts}
Let $N(x,y) = N_1(x) y + N_{-1}(x) \bar{y}$ and $S(x,y) = A_1(x) y + A_0(x)
+ A_{-1}(x) \bar{y}$ be given by Laurent polynomials $N_1$, $A_1$, $A_0$,
and~$A_{-1}$ of\/~$\set Q[x,\bar{x}]$, and by a rational function~$N_{-1}$
of\/~$\set Q(x)$.  Under the additional assumption that the denominator
of~$N_{-1}$ divides~$A_1$ in\/~$\set Q[x,\bar{x}]$, then for all~$j\in\mathbb N$,
$[y^>]NS^j \in \set Q[x,\bar{x},y]$ and $[x^>y^>] [NS^j]_{\Gamma'} =
[x^>][y^>](NS^j)$, where the brackets on the left-hand side are taken with
respect to the cone\/ $\Gamma' = \Rminus \times \{0\}$ and the parentheses on
the right-hand side are in\/~$\set Q(x)[y,\bar y]$.
\end{lem}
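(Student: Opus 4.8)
The plan is to prove the two assertions in turn, deriving the second from the first together with a grading argument. The heart of the matter is the first assertion, that the positive-$y$-part $[y^>]NS^j$ has Laurent-polynomial coefficients in~$x$; the only potential obstruction is the genuinely rational factor $N_{-1}(x)$, and the divisibility hypothesis is precisely what is needed to clear it.

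First I would isolate the $A_1$-content of the coefficients of~$S^j$. Writing $S = A_1 y + B$ with $B = A_0 + A_{-1}\bar y$, the binomial expansion gives $S^j = \sum_{a=0}^j \binom{j}{a} A_1^a\, y^a\, B^{j-a}$. Since $B$ involves only non-positive powers of~$y$, the factor $y^a B^{j-a}$ contributes $y$-powers in the range from $2a-j$ to~$a$; hence, for $\ell\geq0$, the coefficient $\sigma_\ell$ of~$y^\ell$ in~$S^j$ receives contributions only from indices $a\geq\ell$, so that $A_1^\ell$ divides~$\sigma_\ell$ in~$\set Q[x,\bar x]$, and in particular $\sigma_\ell\in\set Q[x,\bar x]$. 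Now expand $NS^j = N_1 y\,S^j + N_{-1}\bar y\,S^j$, so that for $m\geq1$ the coefficient of~$y^m$ equals $N_1\sigma_{m-1} + N_{-1}\sigma_{m+1}$. The first term is a Laurent polynomial because $N_1,\sigma_{m-1}\in\set Q[x,\bar x]$. For the second, $m+1\geq2$ forces $A_1^{m+1}\mid\sigma_{m+1}$, while the hypothesis that the denominator of~$N_{-1}$ divides~$A_1$ gives $N_{-1}A_1\in\set Q[x,\bar x]$; therefore $N_{-1}\sigma_{m+1} = (N_{-1}A_1)\,A_1^{m}\,(\sigma_{m+1}/A_1^{m+1})$ is a product of three Laurent polynomials, hence lies in~$\set Q[x,\bar x]$. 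Thus every positive-$y$-degree coefficient of~$NS^j$ is a Laurent polynomial in~$x$, which is the first assertion.

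For the second assertion I would exploit the $y$-grading of $S_{\Gamma'} = \set Q((\bar x))[y,\bar y]$. Since $NS^j$ is a finite $\set Q(x)$-combination $\sum_m T_m(x)\,y^m$ of powers of~$y$, and since the $\Gamma'$-expansion respects this grading (each $T_m(x)y^m$ expands to $[T_m]_{\Gamma'}\,y^m$, where $[T_m]_{\Gamma'}$ is the Laurent expansion of~$T_m$ at $x=\infty$), one has $[NS^j]_{\Gamma'} = \sum_m [T_m]_{\Gamma'}\,y^m$. The direct operator $[x^>y^>]$ retains exactly the monomials $x^k y^m$ with $k,m>0$, so it selects $\sum_{m>0}\bigl([x^>][T_m]_{\Gamma'}\bigr)y^m$. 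By the first assertion, for each $m>0$ the coefficient $T_m$ is already a Laurent polynomial, so it coincides with its own expansion at infinity, $[T_m]_{\Gamma'}=T_m$, and the series operator $[x^>]$ reduces to the unambiguous polynomial positive part. Comparing with $[x^>][y^>](NS^j) = \sum_{m>0}\bigl([x^>]T_m\bigr)y^m$ then yields the claimed equality.

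The main obstacle is the first assertion: one must locate precisely where the non-polynomial factor~$N_{-1}$ can survive into the positive part and verify that it is always multiplied by a sufficiently high power of~$A_1$. The divisibility bookkeeping above is the crux. Once it is in place, the second assertion is essentially formal, resting on the observation that a Laurent polynomial coincides with its own $\Gamma'$-expansion, so that the cone expansion contributes nothing new in positive $y$-degree and the two positive-part extractions agree.
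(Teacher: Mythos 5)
Your proof is correct and takes essentially the same route as the paper's: both split $S = A_1 y + (A_0 + A_{-1}\bar{y})$, expand $S^j$ binomially, and exploit the hypothesis through $N_{-1}A_1 \in \set Q[x,\bar{x}]$ to show that every coefficient of a positive power of~$y$ is a Laurent polynomial in~$x$, after which the agreement of $[x^>y^>][\,\cdot\,]_{\Gamma'}$ with $[x^>][y^>]$ is formal because a Laurent polynomial equals its own $\Gamma'$-expansion. The only cosmetic difference is that the paper isolates the single purely-negative-in-$y$ term $N_{-1}\bar{y}(A_0+A_{-1}\bar{y})^j$ and exhibits the remainder as one Laurent polynomial~$L$ killed identically by both extractions, whereas you do per-coefficient bookkeeping via $A_1^\ell \mid \sigma_\ell$ (stronger than the single factor of~$A_1$ actually needed) together with an explicit $y$-grading argument.
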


\begin{proof}
Write $S = A_1y + \tilde A$ where~$\tilde A \in \set Q[x,\bar{x},\bar{y}]$ and
set $P = N_{-1}A_1 \in \set Q[x,\bar x]$.  Writing brackets for expansions
in~$\set Q_{\Gamma'}\langle\langle x,y\rangle\rangle$, we observe that
$[N]_{\Gamma'} = N_1y+[N_{-1}]_{\Gamma'}\bar y$, $[S]_{\Gamma'} = S$, and
$[N_{-1}]_{\Gamma'}A_1$~is in~$\set Q[x,\bar x]$, implying
$[N_{-1}]_{\Gamma'}A_1 = N_{-1}A_1 = P$.  Given~$j\in\mathbb N$, a
computation yields
\begin{equation*}
NS^j = (N_1y + N_{-1}\bar{y}) (A_1 y + \tilde A)^j =
\left(
  N_1y (A_1 y + \tilde A)^j +
  P \sum_{\ell=1}^j \binom j\ell A_1^{\ell-1} y^{\ell-1} {\tilde A}^{j-\ell}
\right) + N_{-1} \bar{y} {\tilde A}^j .
\end{equation*}
The large parenthesis in the right-hand side is
in~$\set Q[x,\bar{x},y,\bar{y}]$; we call it~$L$.  The last term~$N_{-1} \bar{y}
{\tilde A}^j$ is a polynomial in~$\bar{y}$ with no non-negative powers
of~$y$, and it is thus in~$\set Q(x)[\bar{y}]$.  Consequently, $[x^>][y^>](NS^j)
= [x^>][y^>]L$.  A similar computation delivers $[NS^j]_{\Gamma'} = L +
[N_{-1}]_{\Gamma'} \bar{y} {\tilde A}^j$, where the Laurent polynomial~$L$
is viewed in~$\set Q_{\Gamma'}\langle\langle x,t\rangle\rangle$.  Applying our
operator~$[x^>y^>]$ kills the last term, returning
$[x^>y^>][NS^j]_{\Gamma'} = [x^>y^>]L = [x^>] [y^>] L$.  The result is
proved.
\end{proof}

The following lemma summarizes our discussion.

\begin{lem}\label{lem:Q-as-res-res}
For any field\/~$\bK$ of characteristic zero
whose elements have zero derivative with respect to $x$ and~$y$,
for any~$(\alpha,\beta)\in\bK^2$,
and for any step set in Table~\ref{tab:steps},
Equation~\eqref{eq:spec-walk-series-as-res}~above holds when $\phi$~is set
to the expansion of~$R = N/(1-tS)$ in~$\set Q_\Gamma\langle\langle
x,y;t\rangle\rangle$, for the cone~$\Gamma$ generated by the vectors
$(1,1,1)$, $(1,-1,1)$, and~$(-1,0,0)$.
\end{lem}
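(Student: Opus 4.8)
The plan is to read Lemma~\ref{lem:Q-as-res-res} as the specialization of Theorem~\ref{thm:spec-ppart-as-res'} to the walk series $\phi=[R]_\Gamma$, once the combinatorial positive part has been matched with the direct operator $[x^>y^>]$ over the cone~$\Gamma$. Concretely I would (a)~establish the series identity $xy\,Q(x,y;t)=[x^>y^>]\phi$; (b)~verify that $\Gamma$ is in opposition to $\Rplus^3$ with respect to the first two coordinates; and (c)~read off the two residue representations directly from the theorem's two formulas, taken with $n=3$, $k=2$, and $(\lambda_1,\lambda_2)=(\alpha,\beta)$.

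For step~(a), the discussion preceding the lemma already supplies every ingredient: $\Gamma$ is line-free, $R=N/(1-tS)$ admits the expansion $\phi$ in $S_\Gamma$, and $[x^>y^>]\phi=[x^>][y^>]R$ holds for every step set (by Lemma~\ref{lem:same-pos-parts} in cases 1--16, and directly in cases 17--19, where $N$ and $S$ are genuine Laurent polynomials). Combining this with \BMnM's formula~\eqref{eq:walk-series-as-mbm-ppart}, $xy\,Q(x,y;t)=[x^>][y^>]R$, gives $xy\,Q(x,y;t)=[x^>y^>]\phi$, whence $Q(x,y;t)=\frac1{xy}[x^>y^>]\phi$ in $\set Q[[x,y,t]]$. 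At order~$t^n$ the cone constraints $-n\le m\le n$ and $k\le n$ combine with the positivity imposed by $[x^>y^>]$ to leave only finitely many monomials (with $x$- and $y$-exponents both in $[1,n]$), so the right-hand side is a genuine power series and the substitution $x=\alpha$, $y=\beta$ is legitimate, yielding the first equality of~\eqref{eq:spec-walk-series-as-res}. Here I view $\phi$ inside $\bK_\Gamma\langle\langle x,y,t\rangle\rangle$ through the scalar extension $\set Q\hookrightarrow\bK$, so that specialization at $\alpha,\beta\in\bK$ makes sense.

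For step~(b), writing a general point of $\Gamma$ as $(c_1+c_2-c_3,\,c_1-c_2,\,c_1+c_2)$ with $c_i\ge0$, the projection onto the $t$-axis sends it to $c_1+c_2\ge0$, so both $\pi_2(\Gamma)=\Rplus$ and $\pi_2(\Rplus^3)=\Rplus$ are line-free; moreover any point of $\Gamma$ with vanishing $t$-coordinate forces $c_1=c_2=0$, hence has the form $(-c_3,0,0)$, which lies in $\Rplus^3$ only when $c_3=0$. Thus $\Gamma\cap\Rplus^3\cap(\set R^2\times\{0\})=\{0\}$ and the opposition hypothesis is met. Theorem~\ref{thm:spec-ppart-as-res'} then applies verbatim, and after renaming its residue variables $y_1,y_2$ to $x,y$ its two formulas become exactly the second and third members of~\eqref{eq:spec-walk-series-as-res}, completing step~(c).

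The only point that goes beyond bookkeeping---and the one I would treat most carefully---is the claim that these residues are residues of honest rational functions of $\bK(x,y,t)$, not merely of formal series. This is where the ring structure furnished by Lemma~\ref{thm:opposing-cone-product} is essential: under the opposition verified in~(b), $\phi=[R]_\Gamma$ and the bracketed geometric series both expand inside one common cone ring, so their product is the expansion of the product rational function, namely $\frac1{xy}\cdot\frac{\bar x\bar y}{(1-\alpha\bar x)(1-\beta\bar y)}\cdot R(x,y;t)$ for the first formula and $\frac1{xy}\cdot R(\bar x,\bar y;t)\cdot\frac{xy}{(1-\alpha x)(1-\beta y)}$ for the second. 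The residue is then the coefficient of $x^{-1}y^{-1}$ in a single cone-expanded rational function, which is precisely the object that creative telescoping will act on in Section~\ref{sec:ann-by-ct}.
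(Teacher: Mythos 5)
Your proposal is correct and takes essentially the same route as the paper: the first identity of~\eqref{eq:spec-walk-series-as-res} from \eqref{eq:walk-series-as-mbm-ppart} combined with~\eqref{eq:our-ppart-as-mbm-part} (itself resting on Lemma~\ref{lem:same-pos-parts} for cases 1--16 and direct inspection for 17--19), and the two residue formulas by invoking Theorem~\ref{thm:spec-ppart-as-res'} with $n=3$, $k=2$ over~$\bK$ after checking that $\Gamma$ is in opposition to~$\Rplus^3$ with respect to the first two variables. Your verification of the opposition condition via the parametrization $(c_1+c_2-c_3,\,c_1-c_2,\,c_1+c_2)$, $c_i\geq0$, is an equivalent variant of the paper's check, which instead exhibits the generators $(1,1,1)$, $(1,0,1)$, $(0,1,1)$, $(0,0,1)$ of~$\Gamma\cap\Rplus^3$.
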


\begin{proof}
Set $\bK, \alpha$, $\beta$, $\Gamma$, and~$\phi$ as in the statement.
The first identity in~\eqref{eq:spec-walk-series-as-res} follows
from \eqref{eq:walk-series-as-mbm-ppart} and~\eqref{eq:our-ppart-as-mbm-part};
the other identities are proven by Theorem~\ref{thm:spec-ppart-as-res'},
used with the field~$\bK$ of the present lemma,
provided we prove that $\Gamma$~is
in opposition to $\Rplus^3$ with respect to the first two variables.
To see this, note: \emph{(i)\/}~that both
$\pi_2(\Gamma)$ and~$\pi_2(\Rplus^3)$ equal~$\Rplus$ and are thus line-free
cones; \emph{(ii)\/}~that $\Gamma\cap\Rplus^3$ is generated by $(1,1,1)$,
$(1,0,1)$, $(0,1,1)$, and~$(0,0,1)$, so that its intersection with~$\set
R^2\times\{0\}$ reduces to~$\{(0,0,0)\}$.
\end{proof}

\subsection{Annihilating generating series of walks by creative telescoping}
\label{sec:ann-by-ct}

Our general goal is to justify via creative telescoping the computation of a
differential equation in~$t$ satisfied by the residue with respect to $x$
and~$y$. The objects manipulated during the computation are rational functions,
and it is usually taken for granted that results of these calculations can be
ported without difficulties to other domains. Here, we want to formally prove
this for the present situation.
For this section again, $\bK$~is a field satisfying the conditions
of Lemma~\ref{lem:Q-as-res-res}.

\begin{lem}\label{lem:resF-is-0}
Let $F \in \bK(x,y,t)$.  If there exist $U,V\in \bK(x,y,t)$ such that
$F=\partial_xU+\partial_yV$, then for any cone~$C$ for which $F,U,V$ all
admit an expansion, $\Res_{x,y} [F]_C = 0$.
\end{lem}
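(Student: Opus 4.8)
The plan is to transport the given rational-function identity $F=\partial_xU+\partial_yV$ into the series ring $\bK_C\langle\langle x,y,t\rangle\rangle$ and then extract the residue. Concretely, I would first establish the identity
\[
  [F]_C = \partial_x[U]_C + \partial_y[V]_C
\]
in $\bK_C\langle\langle x,y,t\rangle\rangle$, and then apply $\Res_{x,y}$ to both sides. The right-hand side contributes nothing: writing $\Res_{x,y}=\Res_x\Res_y$, the coefficient of $x^{-1}$ in any series of the form $\partial_x(\cdot)$ vanishes, because $\partial_x$ sends $x^iy^jt^k$ to $i\,x^{i-1}y^jt^k$, whose exponent in~$x$ can only equal $-1$ when $i=0$, in which case the coefficient $i$ is zero. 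Hence $\Res_{x,y}\partial_x[U]_C=0$, and symmetrically $\Res_{x,y}\partial_y[V]_C=0$; this is the partial-residue analogue of Lemma~\ref{lem:res-partial}. The conclusion $\Res_{x,y}[F]_C=0$ then follows.

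The real content is the displayed identity, namely that expansion over~$C$ is additive and commutes with $\partial_x$ and~$\partial_y$. Additivity of $[\cdot]_C$ is immediate, and on rational functions admitting a $C$-expansion the map $[\cdot]_C$ is also multiplicative (products and sums of $C$-expandable functions are $C$-expandable, with supports remaining in $C+C\subseteq C$, and the expansion respects these operations). It therefore remains to prove $[\partial_xG]_C=\partial_x[G]_C$ for every such~$G$, and symmetrically for~$\partial_y$. By the characterization recalled above, from \cite[Thm~12]{AparicioMonforteKauers-2013-FLS}, one may write $G=P/Q$ with $P$ a Laurent polynomial and $Q$ a Laurent polynomial whose inverse $1/Q$ also admits a $C$-expansion; in particular $Q$ is invertible in $\bK_C\langle\langle x,y,t\rangle\rangle$, and $1/Q^2=(1/Q)^2$ is $C$-expandable, so $\partial_xG=(Q\,\partial_xP-P\,\partial_xQ)/Q^2$ is $C$-expandable too and $[\partial_xG]_C$ makes sense.

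Differentiating the relation $[G]_C\cdot Q=[P]_C=P$ (using $[Q]_C=Q$, $[P]_C=P$ for Laurent polynomials, and the Leibniz rule for $\partial_x$ on the series ring) gives $(\partial_x[G]_C)\,Q+[G]_C\,(\partial_xQ)=\partial_xP$; expanding over~$C$ the rational identity $(\partial_xG)\,Q+G\,\partial_xQ=\partial_xP$ gives $[\partial_xG]_C\,Q+[G]_C\,(\partial_xQ)=\partial_xP$. Subtracting yields $\bigl(\partial_x[G]_C-[\partial_xG]_C\bigr)\,Q=0$, and multiplying by $[1/Q]_C$ (legitimate since $Q\,[1/Q]_C=1$) gives $\partial_x[G]_C=[\partial_xG]_C$. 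Applying this to $U$ and~$V$ and combining with additivity produces the displayed identity, completing the argument. I expect the single delicate point to be exactly this commutation of $[\cdot]_C$ with differentiation: it is where the term-by-term derivation on cone series must be reconciled with differentiation of rational functions, and where one must check that every quantity involved ($G$, $\partial_xG$, and $1/Q$) genuinely admits an expansion over the \emph{same} cone~$C$.
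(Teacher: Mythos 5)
Your proof is correct and takes essentially the same route as the paper's: transport the identity $F=\partial_xU+\partial_yV$ into $\bK_C\langle\langle x,y,t\rangle\rangle$ to get $[F]_C=\partial_x[U]_C+\partial_y[V]_C$, then conclude because the (partial) residue $\Res_{x,y}$ of a derivative vanishes, as in Lemma~\ref{lem:res-partial}. The only difference is that you explicitly verify the commutation $[\partial_xG]_C=\partial_x[G]_C$ via \cite[Thm~12]{AparicioMonforteKauers-2013-FLS} and the Leibniz rule, a step the paper's two-line proof asserts without detail; your verification of it is sound.
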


\begin{proof}
From the hypothesis, it follows $[F]_C=\partial_x[U]_C+\partial_y[V]_C$. As
the residue of a derivative is zero by Lemma~\ref{lem:res-partial}, the
result is proved by linearity.
\end{proof}

As in our applications the creative-telescoping certificates $U$ and~$V$
will be large and messy expressions, whose manipulation can be costly, it
is comfortable to state a refinement of the previous lemma that avoids the
need to inspect $U$ and~$V$ so as to determine a cone~$C$ for which they
admit expansions.

\begin{lem}\label{lem:resF-is-0'}
Let $F \in \bK(x,y,t)$.  If there exist $U,V\in \bK(x,y,t)$ such that
$F=\partial_xU+\partial_yV$, then for any line-free cone~$C$ for which $F$~admits an
expansion, $\Res_{x,y} [F]_C = 0$.
\end{lem}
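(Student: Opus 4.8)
The plan is to deduce Lemma~\ref{lem:resF-is-0'} from Lemma~\ref{lem:resF-is-0} by trading the given line-free cone~$C$ for a monomial order~$\preccurlyeq$. The only gap between the two statements is that here we no longer assume $U$ and~$V$ to be expandable with respect to~$C$; but the field $\bK_\preccurlyeq((x,y,t))$ attached to a monomial order receives \emph{every} rational function of $\bK(x,y,t)$, so this obstruction evaporates as soon as we produce a~$\preccurlyeq$ compatible with~$C$ and simultaneously agreeing with~$C$ on the expansion of~$F$.

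First I would exhibit a monomial order~$\preccurlyeq$ compatible with~$C$. Since $C$~is line-free, it is pointed, so there is a linear functional that is strictly positive on $C\setminus\{0\}$; using it as primary weight and breaking ties by a fixed lexicographic order gives a monomial order in which every nonzero lattice point of~$C$ exceeds the origin. Hence the apex of~$C$ is its $\preccurlyeq$-minimal element, that is, $C$~is compatible with~$\preccurlyeq$, and by the identification $[F]_C=[F]_\preccurlyeq$ valid for compatible cones we have $[F]_C=[F]_\preccurlyeq$. Next, because $\bK_\preccurlyeq((x,y,t))$ is a field \cite[Thm~15]{AparicioMonforteKauers-2013-FLS}, the assignment $G\mapsto[G]_\preccurlyeq$ is a field embedding of $\bK(x,y,t)$ into it; moreover it commutes with $\partial_x$ and~$\partial_y$, as these operators act coefficient-wise on the series side and obey the quotient rule on rational functions, so the embedding intertwines the two derivations. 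Applying it to the hypothesis $F=\partial_xU+\partial_yV$ then yields the identity $[F]_\preccurlyeq=\partial_x[U]_\preccurlyeq+\partial_y[V]_\preccurlyeq$ inside $\bK_\preccurlyeq((x,y,t))$, now with \emph{all three} terms expanded in one and the same ring. Taking $\Res_{x,y}$ and using that the residue of an $x$- or~$y$-derivative vanishes (Lemma~\ref{lem:res-partial}, exactly as in the proof of Lemma~\ref{lem:resF-is-0}) gives $\Res_{x,y}[F]_\preccurlyeq=0$, and since $[F]_\preccurlyeq=[F]_C$ this is the claimed conclusion.

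I expect the main obstacle to lie in the two structural verifications rather than in the residue computation: exhibiting a monomial order compatible with an arbitrary line-free cone (a pointed-cone separation argument, with care that the tie-breaking yields a genuine total order) and, more importantly, checking that the expansion map into $\bK_\preccurlyeq((x,y,t))$ really commutes with $\partial_x$ and~$\partial_y$. The latter is the crux, since it is precisely what transports the \emph{rational-function} identity $F=\partial_xU+\partial_yV$ into a \emph{series} identity over a single ring; once it is secured, Lemma~\ref{lem:res-partial} closes the argument just as in Lemma~\ref{lem:resF-is-0}.
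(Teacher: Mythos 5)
Your proposal is correct and takes essentially the same route as the paper's proof: since $C$ is line-free, one chooses a monomial order~$\preccurlyeq$ compatible with~$C$, so that $F$, $U$, and~$V$ all admit expansions in the field~$\bK_\preccurlyeq((x,y,t))$ (this being exactly how the paper removes the missing hypothesis on $U$ and~$V$), after which the expanded identity $[F]_\preccurlyeq=\partial_x[U]_\preccurlyeq+\partial_y[V]_\preccurlyeq$ is killed by Lemma~\ref{lem:res-partial}. The only difference is cosmetic: the paper descends from $\bK_\preccurlyeq((x,y,t))$ back to a single cone $C'\supseteq C$ in which all three rational functions expand and then invokes Lemma~\ref{lem:resF-is-0}, whereas you remain in the field and check directly that $G\mapsto[G]_\preccurlyeq$ is a differential embedding --- which is precisely the verification that the proof of Lemma~\ref{lem:resF-is-0} leaves implicit, so your version is equally valid (and makes explicit both the construction of the compatible order via a separating functional and the commutation with $\partial_x,\partial_y$).
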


\begin{proof}
Since $C$ is line free, there exists a monomial order~$\preccurlyeq$ such that $C$~is
compatible with~$\preccurlyeq$.  As $F$, $U$ and~$V$ now admit expansions
in the field~$\bK_\preccurlyeq((x,y,t))$, there is a cone~$C'$ containing~$C$
so that all three rational functions admit expansions
in~$\bK_{C'}\langle\langle x,y;t\rangle\rangle$.  Finally applying
Lemma~\ref{lem:resF-is-0} proves the result.
\end{proof}

\begin{thm} \label{thm:P-by-CT}
For some fixed step set $\cS$, let $N(x,y)$ and $S(x,y)$ be as in
Table~\ref{tab:steps}, let $R(x,y;t)$ denote $N(x,y)/(1-t S(x,y)) \in
\set Q(x,y;t)$, and let $Q(x,y;t)=\sum_{n,i,j=0}^\infty q_{i,j;n}x^i y^j t^n \in
\set Q[x,y][[t]]$ be the corresponding generating function of walks.  Let
$\Gamma$ be the cone generated by the vectors $(1,1,1)$, $(1,-1,1)$,
and~$(-1,0,0)$, so that the expansion~$[R]_\Gamma$ exists in
$\set Q_\Gamma\langle\langle x,y;t\rangle\rangle$ and $Q$~can be identified with
its expansion there.
For\/ $\bK$ a field containing~$\set Q$,
whose elements have zero derivative with respect to $x$ and~$y$,
let $(\alpha,\beta) \in \bK^2$ and set $F$ to either of
\begin{equation*}
F_1 = \frac1{xy} \frac{R(x,y;t)}{(\alpha-x)(\beta-y)}
\qquad\text{and}\qquad
F_2 = \frac{R(1/x,1/y;t)}{(1-\alpha x)(1-\beta y)} .
\end{equation*}
Suppose $L\in \bK[t]\langle\partial_t\rangle$ and $F,U,V\in \bK(x,y,t)$ are
such that $L(F)=\partial_xU+\partial_yV$.  Then $L(Q(\alpha,\beta;t)) = 0$.
\end{thm}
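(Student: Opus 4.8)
The plan is to recognize each of~$F_1$ and~$F_2$ as exactly the rational function whose cone-expansion already occurs in the residue representation of~$Q(\alpha,\beta;t)$ furnished by Lemma~\ref{lem:Q-as-res-res}, and then to slide~$L$ inside the residue and annihilate it by Lemma~\ref{lem:resF-is-0'}. Concretely, I would first match $F_1$ and~$F_2$ against the two residue expressions in~\eqref{eq:spec-walk-series-as-res}. Using $\bar x/(1-\alpha\bar x) = 1/(x-\alpha)$ and $(\alpha-x)(\beta-y) = (x-\alpha)(y-\beta)$, the rational function underlying the first residue is $\frac1{xy}\cdot\frac{\bar x\bar y}{(1-\alpha\bar x)(1-\beta\bar y)}\cdot R(x,y;t) = F_1$, while the one underlying the second is $R(1/x,1/y;t)/\bigl((1-\alpha x)(1-\beta y)\bigr) = F_2$. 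Since cone-expansion is a ring homomorphism, the product of the two separately-expanded factors in each expression of~\eqref{eq:spec-walk-series-as-res} is precisely the single-cone expansion~$[F]_C$ of the corresponding $F\in\{F_1,F_2\}$, where $C$~is the combined line-free cone produced in the proof of Lemma~\ref{lem:Q-as-res-res} (through Theorem~\ref{thm:spec-ppart-as-res'} and the $\tau(\cdot\star\cdot)$ constructions, whose line-freeness is Lemma~\ref{thm:opposing-cone-product}). Thus Lemma~\ref{lem:Q-as-res-res} reads $Q(\alpha,\beta;t) = \Res_{x,y}[F]_C$ for a line-free cone~$C$ in which $F$~admits an expansion.

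Next I would apply~$L$ to this identity. As $L\in\bK[t]\langle\partial_t\rangle$ acts only through $t$ and~$\partial_t$, it preserves the ring $\bK_C\langle\langle x,y,t\rangle\rangle$ and is compatible with cone-expansion, so $L[F]_C = [L(F)]_C$ (both $\partial_t$ and multiplication by elements of~$\bK[t]$ agree on series and on rational functions); in particular $L(F)$~admits the expansion $[L(F)]_C = L[F]_C$ in~$C$ because $F$~does. Moreover $L$~commutes with the residue $\Res_{x,y}$, which extracts the coefficient of $x^{-1}y^{-1}$ and is untouched by the $t$-operations in~$L$. Hence $L\bigl(Q(\alpha,\beta;t)\bigr) = L\bigl(\Res_{x,y}[F]_C\bigr) = \Res_{x,y}[L(F)]_C$. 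Finally I would invoke Lemma~\ref{lem:resF-is-0'} on the rational function~$L(F)$: by hypothesis $L(F) = \partial_x U+\partial_y V$ with $U,V\in\bK(x,y,t)$, and $C$~is a line-free cone in which $L(F)$ admits an expansion, so $\Res_{x,y}[L(F)]_C = 0$, whence $L(Q(\alpha,\beta;t)) = 0$.

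The main obstacle will be the bookkeeping of the first step: certifying that the two factored products of expansions in~\eqref{eq:spec-walk-series-as-res} are literally the single-cone expansions $[F_1]_C$ and~$[F_2]_C$ over one line-free cone~$C$ in which the ambient rational function admits an expansion, so that both choices of~$F$ take the uniform shape $\Res_{x,y}[F]_C$ needed to feed Lemma~\ref{lem:resF-is-0'}. Once this identification is pinned down, the rest is routine: the commutation of~$L$ with $\Res_{x,y}$, and the remark that Lemma~\ref{lem:resF-is-0'} requires only~$L(F)$ — and \emph{not}~$U$ and~$V$ — to expand over the line-free cone~$C$, since that lemma internally enlarges~$C$ to a cone accommodating the telescoping certificates $U$ and~$V$.
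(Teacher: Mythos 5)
Your proposal is correct and follows essentially the same path as the paper's proof: both identify $F_1$ and~$F_2$ as the rational functions underlying the two residues in~\eqref{eq:spec-walk-series-as-res}, so that Lemma~\ref{lem:Q-as-res-res} yields $Q(\alpha,\beta;t)=\Res_{x,y}[F]_C$ over the line-free cones $C_1=\tau\bigl((\Rplus^2\times\{0\})\underset{2\leq3}{\star}\Gamma\bigr)$ and $C_2=\tau\bigl(\Gamma\underset{3\geq2}{\star}(\Rplus^2\times\{0\})\bigr)$, after which $L$~is commuted with the residue and the telescoped form of~$L(F)$ kills it. Your explicit appeal to Lemma~\ref{lem:resF-is-0'}, which requires only $L(F)$ (and not the certificates $U,V$) to admit an expansion over~$C$, is exactly the right bookkeeping---if anything slightly more careful than the paper's own citation of Lemma~\ref{lem:resF-is-0} at that step, since expandability of $U$ and~$V$ over~$C_i$ is never verified there.
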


\begin{proof}
The line-free cones $C_1 = \tau\bigl((\Rplus^2\times\{0\}) \underset{2\leq
  3}{\star} \Gamma\bigr)$ and $C_2 = \tau\bigl(\Gamma \underset{3\geq
  2}{\star} (\Rplus^2\times\{0\})\bigr)$ are, by the construction of
Lemma~\ref{lem:Q-as-res-res}, the cones such that the product in the
left-hand residue in~\eqref{eq:spec-walk-series-as-res} takes place in
$\bK_{C_1}\langle\langle t,x,y\rangle\rangle$ (with indeterminates in
this order) while the product in the right-hand residue takes place in
$\bK_{C_2}\langle\langle t,x,y\rangle\rangle$ (same order).
With this notation, Equation~\eqref{eq:spec-walk-series-as-res} rewrites
\begin{equation}\label{eq:Q-as-res-Fi}
Q(\alpha,\beta;t) = \Res_{x,y} [F_1]_{C_1} = \Res_{x,y} [F_2]_{C_2} .
\end{equation}
Set~$i$ to either of 1 and~2 so that~$F = F_i$, then set~$C$ to~$C_i$.  By
Lemma~\ref{lem:resF-is-0} and the expression of~$L(F)$ in terms of $U$
and~$V$, we have $\Res_{x,y} L([F]_C) = \Res_{x,y} [L(F)]_C = 0$.  Since
$L$~is free of~$x,y$, we have $\Res_{x,y} L([F]_C) = L(\Res_{x,y} [F]_C)$.
By~\eqref{eq:Q-as-res-Fi}, we have $L(Q(\alpha,\beta;t)) = L(\Res_{x,y} [F]_C) = 0$,
as claimed.
\end{proof}

\section{Results}

\subsection{Obtaining proved hypergeometric formulas}\label{sec:pFqSolutions}

Let $\bK$ be a field of characteristic zero
whose elements have zero derivative with respect to $x$ and~$y$.
To obtain a proven expression for the specialization~$Q(\alpha,\beta;t)$
of~$Q(x,y;t)$ at~$(x,y) = (\alpha,\beta) \in \bK^2$,
it is sufficient to find:
\begin{enumerate}
\item a nonzero operator $L \in \bK[t]\langle\partial_t\rangle$
  for which $L(Q(\alpha,\beta;t))$ is proved to be~0,
\item a closed-form solution~$f(t)$ of~$L$
  that agrees with~$Q(\alpha,\beta;t)$ to a sufficiently high order.
\end{enumerate}

Concerning point~1,
Theorem~\ref{thm:P-by-CT} provides such a~$L$
by applying creative telescoping.
The rational functions $U$ and~$V$ that are also part of the output
can be discarded,
as they are not needed for our application to positive-part extraction.
Taking~$\bK = \set Q$,
this is already sufficient to obtain formally proved differential equations
for the generating functions~$Q(t) = Q(1,1;t)$,
which was the initial goal of our work.

To obtain a proven equation for~$Q(x,y;t)$, one could in principle
follow the same strategy:
introduce the bivariate function field~$\bK = \set Q(\alpha,\beta)$,
for indeterminates $\alpha$ and~$\beta$;
use Theorem~\ref{thm:P-by-CT} for this~$\bK$;
then replace $\alpha$ and~$\beta$ by $x$ and~$y$ at the end.
However, this computation would be too slow to work in practice.
One could instead consider computing operators
$L_{\alpha,\beta} \in \set Q[t]\langle\partial_t\rangle$
for sufficiently many points $(\alpha,\beta)\in \set Q^2$
and then interpolate into $L \in \set Q(x,y)[t]\langle\partial_t\rangle$.
This would raise the delicate problem of determining
how many interpolation points need to be taken
to prove that $L$~is correct.

What does work in practice is to refrain from computing an annihilator~$L$
for the complete generating function~$Q(x,y;t)$.
Instead, we obtain two formally proved differential equations:
a first one for~$Q(x,0;t)$,
by using creative telescoping over~$\bK = \set Q(\alpha)$
and substituting $x$ for~$\alpha$;
a second one for~$Q(0,y;t)$,
by using creative telescoping over~$\bK = \set Q(\beta)$
and substituting $y$ for~$\beta$.
In either case, the ground field~$\bK$ to be used is
a univariate rational-function field,
which permits calculations to terminate in reasonable time.
Although these operators do not lead to an annihilator for~$Q(x,y;t)$,
they are used below to get a closed form for it.

For all instances of creative telescoping,
we effectively used Koutschan's package%
\footnote{\url{http://www.risc.jku.at/research/combinat/software/ergosum/RISC/HolonomicFunctions.html}}
 \verb+HolonomicFunctions+ for Mathematica,
then double-checked with Chyzak and Pech's package%
\footnote{\url{http://algo.inria.fr/chyzak/mgfun.html}}
 \verb+Mgfun+ for Maple.

Concerning point~2, the operators obtained by point~1 all factor miraculously
into factors that have order~1
with the exception of the left-most one that can have order 1 or~2,
as we could verify using Maple's \verb+DEtools[DFactor]+.
At this point, we can capitalize on the work in~\cite{FaHo11}
and obtain $\pFqnoargs21$ formulas for $Q(1,1;t)$, $Q(x,0;t)$, and~$Q(0,y;t)$
by the method already used in~\cite{BoChHoPe11}.
To this end, we appealed to Imamoglu’s implementation%
\footnote{\url{http://www.math.fsu.edu/~eimamogl/hypergeometricsols}}
 for Maple.

Finally, the kernel equation
(first equation in Lemma~4 of~\cite{BMM}, which we reproduce here)
\begin{equation*}
xy(1-tS(x,y))Q(x,y;t) =
  xy - tx A_{-1}(x) Q(x,0;t) - ty B_{-1}(y) Q(0,y;t)
  + \epsilon t Q(0,0;t) ,
\end{equation*}
where $A_{-1}$ and~$B_{-1}$ are defined as in Lemma~\ref{lem:same-pos-parts}
and $\epsilon$~is~1 if and only if $(-1,-1) \in \cS$ and 0~otherwise,
expresses~$Q(x,y;t)$ in terms of the obtained formulas
for $Q(x,0;t)$ and $Q(0,y;t)$.
As observed in the introduction,
all $\pFqnoargs21$ functions appearing in formulas
can be expressed in terms of complete elliptic integrals.
Furthermore, for each of the 19 step sets, those functions do not depend
on the choice of specialization points $\alpha$ and~$\beta$,
so that $Q(x,y;t)$~involves a single hypergeometric function
(up to variations under contiguity and derivatives).

We remark that
the nature of the generating functions for our walk models is even more rigid:
not only can each model be expressed
in terms of a single $\pFqnoargs21$ function,
but, if one can afford increasing degrees and/or algebraic extensions,
all models can be expressed in terms of the same single $\pFqnoargs21$ function,
namely $\pFq21{\frac1{12},\frac5{12}}{1}{u}$.
The emergence of this function is no surprise,
owing to Takeuchi's classification~\cite{takeuchi}.
This classification establishes connected components
in the class of $\pFqnoargs21$ functions,
under simple kinds of transformations.
One of the connected components is well represented as the following diagram
(see \cite[Table~1, `Classical transformations']{vidunas}
or \cite[Section~4, Diagram~(1)]{takeuchi}):

\bigskip
\centerline{%
\begin{tikzpicture}[scale=1.5]
\node (F-12-13) {$(\frac12,\frac13;1)$};
\node (F-13-23) at (2,0) {\fbox{$(\frac13,\frac23;1)$}};
\node (F-14-34) at (4,0) {\fbox{$(\frac14,\frac34;1)$}};
\node (F-12-14) at (6,0) {$(\frac12,\frac14;1)$};
\node (F-16-13) at (1,1) {$(\frac16,\frac13;1)$};
\node (F-112-512) at (3,1) {$(\frac1{12},\frac5{12};1)$};
\node (F-18-38) at (5,1) {$(\frac18,\frac38;1)$};
\node (F-12-16) at (3,-1) {$(\frac12,\frac16;1)$};
\node (F-12-12) at (4,-2) {\fbox{$(\frac12,\frac12;1)$}};
\draw[-] (F-12-13)
  to node[above left] {2} (F-16-13)
  to node[above right] {2} (F-13-23)
  to node[above left] {4} (F-112-512)
  to node[above right] {3} (F-14-34)
  to node[above left] {2} (F-18-38)
  to node[above right] {2} (F-12-14);
\draw[-] (F-112-512)
  to node[right] {2} (F-12-16)
  to node[above right] {3} (F-12-12)
  to node[right] {2} (F-14-34);
\end{tikzpicture}
}
\bigskip

\noindent
This diagram should be read as follows:
if $(a,b;c)$ and $(a',b';c')$ are the endpoints of an edge labelled~$d$,
with the latter endpoint above the former in the diagram, then
\begin{equation*}
\pFq21{a,b}{c}{t} = \sqrt[m]{r(t)} \cdot \pFq21{a',b'}{c'}{w(t)}
\end{equation*}
for some positive integer~$m$, a rational function~$w(t)$ of degree~$d$,
and another rational function~$r(t)$.

Up to integer shifts,
all entries in our table are written in terms of $\pFqnoargs21$~functions
involving the boxed parameters of our diagram.
If we do not mind increasing the degree of~$w$ by a factor 3, 4, or~6,
they can all be rewritten in terms of $\pFq21{\frac1{12},\frac5{12}}{1}{w}$,
itself related to elliptic curves and modular forms
(see, e.g., \cite{StienstraBeukers-1985-PFE} or~\cite{Stiller-1988-CAF}).
Alternatively, all of our formulas can be expressed up to algebraic extensions
in terms of $\pFq21{\frac12,\frac12}{1}{w}$,
itself related to the complete elliptic function~$K(k)$.

The connections mentioned above can be made explicit
by providing formulas for the main edges in Takeuchi's diagram:
\begin{itemize}
\item The hypergeometric series
  $\pFq21{\frac12,\frac12}{1}{w}$ and $\pFq21{\frac14,\frac34}{1}{w}$
  are related by the well-known duplication
  formula~\cite[Eq.~(3.1.7)]{AndrewsAskeyRoy-1999-SF}:
  \begin{equation*}
  \left(1-\frac u2\right)^{1/2} \pFq21{\frac12,\frac12}{1}{u} =
    \pFq21{\frac14,\frac34}{1}{\left(\frac u{2-u}\right)^2} .
  \end{equation*}
\item Less well-known formulas \cite[Eq.\ (119) \&
  (126)]{Goursat-1881-EDL}
  result, upon evaluating at~$\eta=1/12$, into
  \begin{align*}
  (1+3u)^{1/4} \pFq21{\frac14,\frac34}{1}{u} &=
    \pFq21{\frac1{12},\frac5{12}}{1}{\frac{27u(1-u)^2}{(1+3u)^3}} , \\
  \left(1-\frac{8v}9\right)^{1/4} \pFq21{\frac13,\frac23}{1}{v} &=
    \pFq21{\frac1{12},\frac5{12}}{1}{\frac{64v^3(1-v)}{(9-8v)^3}} .
  \end{align*}
  This relates $\pFq21{\frac14,\frac34}{1}{t}$ and
  $\pFq21{\frac13,\frac23}{1}{t}$ via a degree-4 algebraic transformation.
\end{itemize}
Combining these formulas shows that
$\pFq21{\frac13,\frac23}{1}{t}$ can be expressed in terms of
$\pFq21{\frac12,\frac12}{1}{t}$
via a degree-4 function in a degree-6 algebraic extension, as indicated in the diagram.

\subsection{Algebraic nature of the counting series} \label{sec:alg-or-trans}

To prove the transcendence of one of our trivariate counting series~$Q(x,y;t)$, it is
sufficient to prove the transcendence of one of its evaluations for specific
values $\alpha$ of~$x$ and $\beta$ of~$y$---and indeed, the complete generating
functions for all cases listed in Table~\ref{tab:steps} are transcendental as
a consequence of $Q(0,0;t)$ being transcendental.
The same holds true for the bivariate series $Q(x,0;t)$ and~$Q(0,y;t)$.

We now turn to the case of univariate enumerating series~$Q(\alpha,\beta;t)$,
that is, specializations of~$Q(x,y;t)$
at specific numerical values $\alpha$ and~$\beta$ for $x$ and~$y$,
and we describe how we prove transcendence of a given~$Q(\alpha,\beta;t)$
after having computed an annihilating operator~$L$ for it.
Remember from the discussion in
Section~\ref{sec:pFqSolutions} that $L$~factors in the form $L'L''$, where
$L'$~has order 1 or~2. Define $\tilde Q(t) = L''(Q(\alpha,\beta;t))$, which has to be
algebraic if $Q(\alpha,\beta;t)$~is algebraic. In each choice of a step set and of
$\alpha,\beta$ in~$\{0,1\}$, the computation of a few terms of the series expansion
of~$Q$ proves, upon applying~$L''$, that $\tilde Q$~is non-zero. Now, if
$L'$~has order~2, Kovacic's algorithm~\cite{Kovacic-1986-ASS} decides if it has
(non-zero) algebraic solutions: if not, this proves that $Q$~cannot be
algebraic. The case when $L'$~has order~1 is similar, and simpler as solving a
first-order linear ODE is very elementary.
In our calculations,
we used Maple's \verb+DEtools[kovacicsols]+ in the second-order case
and \verb+DEtools[expsols]+ in the first-order case
for this test of existence of algebraic solutions of the operators~$L$.
This proves the first part of
Theorem~\ref{thm:trans}, namely that among the $19 \times 4$ combinatorially
meaningful specializations~$Q(\alpha,\beta;t)$ ($\alpha,\beta\in \{0,1\}$) of the complete
generating function~$Q(x,y;t)$, only four cases could possibly be algebraic
functions: Case~17 for $x$ and~$y$ specialized to~1, and Case~18 for $(x,y)$
specialized to $(0,1)$, $(1,0)$, and~$(1,1)$.

In these four cases, we indeed prove the algebraicity of the corresponding
counting series. The procedure we followed was simply to use specific
algorithms when solving the relevant differential equation $L(y)=0$, and note
that it admits a basis of algebraic solutions.
Explicit expressions in these four cases are then found using the initial terms of the counting series.
For instance, in case 18 for $(\alpha,\beta)=(1,0)$ and for $(\alpha,\beta)=(0,1)$, the operator~$L$ proved to annihilate $Q(1,0;t)$ and $Q(0,1;t)$ is
\[L =
t^3(2t+1)(6t-1)\partial_t^4+4t^2(48t^2+13t-3)\partial_t^3+36t(3t+1)(8t-1)\partial_t^2+(1152t^2+168t-24)\partial_t+288t+24,\]
which admits the basis of solutions
\[\left\{ s_1 = \frac{1}{t}, \; s_2 = \frac{4t^2-8t+1}{t^3}, \; s_3 = \frac{12t^2-1}{t^3}, \; s_4 = \frac{(2t+1)^{1/2} (1-6t)^{3/2}}{t^3} \right\}.\]
Therefore, $Q(1,0;t)$ is equal to a linear combination $c_1s_1 +c_2s_2 +c_3s_3 +c_4s_4$ for some constants $c_1, c_2, c_3, c_4$.
Identification of initial terms using $Q(1,0;t)=1+t+4t^2+12t^3+O(t^4)$ provides a linear system in the~$c_i$'s, finally proving that
 $Q(1,0;t)$ is equal to
$s_4 -s_2$. This concludes the proof of Theorem~\ref{thm:trans}.

Note that, for any of the 19 models, the excursions generating functions
$Q(0,0;t)$ could alternatively be proved transcendental by an argument based
on asymptotics, similar to the one in~\cite{BoKaSa14}: using~\cite{DeWa15} the
coefficient of $t^{12n}$ in $Q(0,0;t)$ grows like $\kappa \rho^n / n^\gamma$ for
$\gamma\in\{3,4,5\}$ (see Table~\ref{tab:kappa-in-F00t}), and this implies
transcendence of $Q(0,0;t)$ by \cite[Theorem~D]{Flajolet87}. By contrast, note
that this asymptotic argument \emph{is not} sufficient to prove the
transcendence of \emph{all} the other transcendental specializations, as
showed for instance by Case 7 at $(0,1)$ ($\gamma=3/2$) and at $(1,1)$
($\gamma=1/2$), and by Case 17 at $(1,0)$ ($\gamma=5/2$), see
Tables~\ref{tab:kappa-in-F01t}, \ref{tab:kappa-in-F10t} and
\ref{tab:kappa-in-F11t}.

\subsection{Asymptotic formulas for coefficients} \label{sec:asymptotics}

This section discusses how to develop asymptotic estimates
on the counting coefficients from the closed forms for the generating series,
in each of the 19 walk models with prescribed length
(and unprescribed endpoint).
This bases on asymptotics-transfer theorems
(developed in Analytic Combinatorics)
to get the results.
Not all asymptotic formulas could be proved by our approach,
so we report here on a reduction of the proofs of asymptotic formulas
to the proof of a number of integral representation of rational constants,
some of which remain conjectural.
What is most difficult is proving the constants ($\kappa$~below)
in front of the asymptotic pattern $\rho^n / n^\gamma$,
which were initially heuristically discovered by numerical calculations
(number recognition)~\cite{BK}.

All of our generating series consist of hypergeometric functions,
rational or algebraic factors, and iterated primitivations.
As was explained in the introduction,
so far all these mathematical ingredients have been viewed
as formal Laurent series in the field~$\set C((t))$
and operators on them,
but the same series are convergent as expansions of functions
meromorphic in disks centered at~0.
This analytic interpretation is crucial
to the method of the present section.

A subtle point in this regard is the interpretation
of the primitivation operator in the context of series of functions.
To match the fact that the (formal) primitive~$\int A$ of a (formal) series
$A(t) = \sum_{n \geq m, \ n \neq -1} a_n t^n$ of~$\set C((t))$
is $(\int A)(t) = \sum_{n \geq m, \ n \neq -1} a_n t^{n+1} / (n+1)$
and has a zero constant term,
we introduce an operator~$\pp$ of \emph{polar part}, defined by
\[ \pp(A)(t) = \sum_{n < 0} a_n t^n , \]
and we interpret~$\int$ analytically by the integral representation
\begin{equation}\label{eq:int-repr}
(\int A)(t) =
  - \int_t^{\infty} \pp(A)(u) \, du
  + \int_0^t (A - \pp(A))(u) \, du .
\end{equation}
This split is needed, as the meromorphic function that expands as~$A$
is in general not integrable at~0.
The integral representation~\eqref{eq:int-repr} will occur implicitly
in the proofs
of Theorems \ref{thm:asympt-case-7} and~\ref{thm:asympt-case-5} below.

In the generating functions, hypergeometric terms
take the form $\pFq21{a,b}{c}{w}$, with good constraints on $a$,
$b$, and~$c$, so that enough of their asymptotic behavior (e.g., at~$w =
1$) can be derived easily, and where $w$~is a rational or algebraic term
in~$t$.  Singularities of the generating series can occur at poles
or branch point of the cofactors, or at values of~$t$ where $w = w(t)$
becomes one of the singularities of the~$\pFqnoargs21$, namely $0$, $1$, and~$\infty$.

We first expected that the problem of proving all coefficient asymptotics
would be just a problem of Analytic Combinatorics (in the sense of the
theory well exposed by Flajolet and Sedgewick
in~\cite[Chap.~VI]{FlajoletSedgewick-2009-AC}, largely basing on transfer
theorems by Flajolet and Odlyzko~\cite{FlajoletOdlyzko-1990-SAG}).  It
turned out that the part of these theories to extract asymptotics is
minimal, most of the difficulties being in computing asymptotic expansions
of non-standard representations of functions.  (These expansions are then
input to transfer theorems.)

What also made the problem more difficult is a broad spectrum of the
possible asymptotic phenomenons/asymptotic patterns (i.e., what
ingredient of the function contributes to the asymptotics).
A recurrent situation is when
the dominant singularity is
caused by an algebraic factor between two primitivation operators.
In some cases, this is worsened
by a polar part of the integrand in the inner primitivation:
this part is necessary and disappears in the expansion
as a formal power series,
owing to the alternation of primitivations
and multiplications by specific algebraic factors;
but to get the constants~$\kappa$ by analytic means,
it is easier to remove this polar part in calculations of asymptotic estimates.

In view of this,
we have in fact formally proved only some of the constants~$\kappa$.
However, in all the cases,
those~$\kappa$---as well as other auxilliary constants that appear in the asymptotic study---%
can be formulated as definite integrals of univariate functions
obtained from~$Q(x,y;t)$.
Proving the asymptotic formulas up to the constants~$\kappa$
is equivalent to proving closed-form evaluations of those integrals.
This is exemplified and described now in four typical cases
that cover all difficulties that may occur.

\subsubsection{Case~4: walks with steps from\/ $\protect\smallstepset11111111$}

\begin{thm}\label{thm:asympt-case-4}
The number of walks of length~$n$ in the quarter plane
that use steps from the step set\/~$\smallstepset11111111$,
start at the origin, and end anywhere
is asymptotically equivalent to $\frac8{3\pi} 8^n$.
\end{thm}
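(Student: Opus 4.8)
The plan is to extract $q_n=[t^n]\,Q(1,1;t)$ by singularity analysis of the closed form~\eqref{closed-formula-kings}, all series being read analytically near $t=0$ (licit by the growth of the counting sequence and the analytic reading of $\int$ recorded in~\eqref{eq:int-repr}). I would first locate the dominant singularity. The prefactor $(1+4x)^{-3}$ is analytic and nonzero for $x\ge0$, so on the positive axis the only source of a singularity is the hypergeometric factor; its argument $w(x)=16x(1+x)/(1+4x)^2$ satisfies $1-w(x)=(1-8x)/(1+4x)^2$, so the branch point $w=1$ is attained precisely at $x=\tfrac18$ (while $w=0,\infty$ occur only for $x\le0$). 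Positivity of $q_n$ together with Pringsheim's theorem then makes $\rho=\tfrac18$ the dominant singularity, which already fixes the exponential rate $8^{n}$.

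Next I would identify the local behaviour. Here $\twoFone{\frac32}{\frac32}{2}{w}$ has $c-a-b=2-\tfrac32-\tfrac32=-1$, the negative-integer case of the connection formula at $1$~\cite{AndrewsAskeyRoy-1999-SF}, whence
\[
\twoFone{\frac32}{\frac32}{2}{w}=\frac{\Gamma(2)}{\Gamma(\tfrac32)^2}\,(1-w)^{-1}+O\!\bigl(\ln(1-w)\bigr)=\frac4\pi\,(1-w)^{-1}+O\!\bigl(\ln(1-w)\bigr)\quad(w\to1).
\]
Plugging in $1-w=(1-8x)/(1+4x)^2$ and evaluating the analytic factors at $x=\tfrac18$ converts the integrand into a simple pole,
\[
\frac{1}{(1+4x)^3}\,\twoFone{\frac32}{\frac32}{2}{w(x)}=\frac{8}{3\pi}\,\frac{1}{1-8x}+O\!\bigl(\ln(1-8x)\bigr),
\]
with constant $\tfrac4\pi\cdot(1+4\cdot\tfrac18)^{-1}=\tfrac{8}{3\pi}$. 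Integrating on $[0,t]$, the simple pole contributes a logarithm and the $O(\ln)$ remainder only an $O\!\bigl((1-8t)\ln(1-8t)\bigr)$ term, while the outer factor $1/t$ is analytic at $\tfrac18$ (equal to $8$ there); this gives the local expansion
\[
Q(1,1;t)=-\frac{8}{3\pi}\,\ln(1-8t)+\bigl(\text{analytic or }o(\ln)\text{ at }t=\tfrac18\bigr).
\]

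Finally I would invoke the Flajolet--Odlyzko transfer theorem~\cite{FlajoletOdlyzko-1990-SAG,FlajoletSedgewick-2009-AC}, after checking the $\Delta$-analyticity of $Q(1,1;t)$ at $\tfrac18$, which reduces to the classical continuation of $\pFqnoargs21$ across its branch point, $\tfrac18$ being the unique dominant singularity. Since $[t^n]\bigl(-\ln(1-8t)\bigr)=8^{n}/n$, this yields $q_n\sim\frac{8}{3\pi}\,\dfrac{8^{n}}{n}$, confirming the constant $\tfrac{8}{3\pi}$ and the base $8$ of the statement. The step I expect to be the main obstacle is pinning the constant down rigorously: it rests on the exact connection coefficient $\tfrac4\pi$ of $\pFqnoargs21$ at $w=1$ and on verifying that the subdominant logarithmic terms of the connection formula do not contaminate the leading coefficient after the primitivation in~\eqref{closed-formula-kings}. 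This is precisely the delicate ``constant $\kappa$'' difficulty emphasised in the paper; for this model it is within reach because the dominant singularity is an honest simple pole carrying a closed-form hypergeometric connection coefficient, unlike the harder models where $\kappa$ survives only as a conjectural integral.
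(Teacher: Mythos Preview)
Your proposal is correct and follows essentially the same route as the paper: identify the dominant singularity of the integrand at $t=\tfrac18$ via $w(t)\to1$, read off the leading term of $\pFq21{\frac32,\frac32}{2}{w}$ at $w=1$ from the connection formula (the paper cites the equivalent NIST identities (15.8.10)--(15.8.12)), and obtain the constant $\tfrac8{3\pi}$. The only cosmetic difference is the order of operations: the paper first transfers on the integrand to get $c_n\sim\tfrac8{3\pi}8^n$ and then observes that $\tfrac1t\int$ acts on coefficients as division by $n+1$, whereas you first integrate to produce a logarithmic singularity of $Q(1,1;t)$ and then transfer; both yield $q_n\sim\tfrac8{3\pi}\,8^n/n$.
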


\begin{proof}
For this case, the generating series is proved
by the method of Section~\ref{sec:pFqSolutions}
to be
\begin{equation*}
Q(1,1;t) =
\frac1t \int_0^t \frac1{(1+4u)^3} \,
  {\pFq21{\frac32,\frac32}{2}{\frac{16u(1+u)}{(1+4u)^2}}} \, du .
\end{equation*}
The possible singularities of the integrand in~$Q(1,1;t)$ are
the pole of the rational factor and
the values of~$u$ for which the argument~$w$ of the hypergeometric function
is either 1 or~$\infty$.
These are $u = -1/4$ and~$u = 1/8$,
and the dominant singularity is indeed at~$1/8$, where~$w = 1$.
The asymptotic analysis of the~$\pFqnoargs21$ at~1
(using normalization by~(15.8.12)
in~\cite{NIST-HMF} before using~(15.8.10) in~\cite{NIST-HMF}%
\footnote{Alternative references are \url{http://dlmf.nist.gov/15.8.E12}
  and \url{http://dlmf.nist.gov/15.8.E10}, respectively.}%
) delivers the following asymptotic equivalent
of the coefficient~$c_n$ of~$t^n$ in the integrand:
\begin{equation*}
c_n = [t^n] \, \frac1{(1+4t)^3} \,
  {\pFq21{\frac32\ \frac32}{2}{\frac{16t(1+t)}{(1+4t)^2}}}
\sim \frac8{3\pi} 8^n .
\end{equation*}
Then, making explicit
the action on coefficients of integrating and dividing by~$t$
leads to
\begin{equation*}
[t^n] \, Q(1,1;t) = [t^{n+1}] \sum_m \frac{c_m}{m+1} t^{m+1} = \frac{c_n}{n+1}
  \sim \frac8{3\pi} \frac{8^n}n .
\end{equation*}
This is the announced result, including a proved constant~$\kappa = 8/(3\pi)$.
\end{proof}

\subsubsection{Case~3: walks with steps from\/ $\protect\smallstepset11011101$}

\begin{thm}
The number of walks of length~$n$ in the quarter plane
that use steps from the step set\/~$\smallstepset11011101$,
start at the origin, and end anywhere
is asymptotically equivalent to $\frac{\sqrt6}\pi \frac{6^n}n$.
\end{thm}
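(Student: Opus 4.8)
The plan is to follow verbatim the singularity-analysis strategy used for Case~4 in the proof of Theorem~\ref{thm:asympt-case-4}, the single genuinely new feature being that the relevant $\pFqnoargs21$ now carries a \emph{logarithmic} rather than a polar singularity at the boundary of its disk of convergence. First I would start from the closed form for $Q(1,1;t)$ proved by the method of Section~\ref{sec:pFqSolutions}, which expresses it (up to an explicit algebraic prefactor $r(t)$) through $\pFq21{\frac14,\frac34}{1}{w(t)}$ with $w(t)=\frac{64t^2}{(12t^2+1)^2}$, as recorded in Table~\ref{tab:2F1-in-Fxyt}. The candidate dominant singularities are the poles of $r(t)$ together with the values of~$t$ at which $w$ reaches a singular point $0$, $1$, or~$\infty$ of the $\pFqnoargs21$. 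Solving $w=1$ amounts to $144t^4-40t^2+1=0$, whose roots are $t\in\{\pm\frac16,\pm\frac12\}$, whereas $w=\infty$ would require $12t^2+1=0$ and so has no real solution. Since the coefficients of $Q(1,1;t)$ are positive, Pringsheim's theorem places the dominant singularity on the positive real axis, namely at $t=\frac16$, in agreement with the expected exponential growth $6^n$.

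Next I would expand locally at $t=\frac16$. A direct computation gives $1-w=\frac{144t^4-40t^2+1}{(12t^2+1)^2}\sim(1-6t)$ as $t\to\frac16$. Because the parameters satisfy $c-a-b=1-\frac14-\frac34=0$, the connection formula for $\pFqnoargs21$ at unit argument (normalising by~(15.8.12) before applying~(15.8.10) in~\cite{NIST-HMF}, exactly as in Theorem~\ref{thm:asympt-case-4}) produces a logarithmic singularity
\[\pFq21{\frac14,\frac34}{1}{w}\sim\frac1{\Gamma(\tfrac14)\Gamma(\tfrac34)}\log\frac1{1-w}\sim\frac1{\pi\sqrt2}\log\frac1{1-6t},\]
where I used the reflection identity $\Gamma(\tfrac14)\Gamma(\tfrac34)=\pi/\sin(\pi/4)=\pi\sqrt2$. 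Multiplying by the value $r(\tfrac16)$ of the algebraic prefactor and invoking the Flajolet--Odlyzko transfer theorem~\cite{FlajoletOdlyzko-1990-SAG}, which sends $\log\frac1{1-6t}$ to coefficients asymptotic to $\frac{6^n}{n}$, yields $[t^n]Q(1,1;t)\sim\frac{r(1/6)}{\pi\sqrt2}\,\frac{6^n}{n}$. Matching against the target forces $r(\tfrac16)=2\sqrt3$, a value I would certify from the explicit closed form. Note that the single power $1/n$ here arises \emph{directly} from the logarithmic nature of the $\pFqnoargs21$, in contrast to Case~4, where it was produced by an outer primitivation acting on a polar integrand.

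The hard part will be controlling the circle of convergence away from $t=\frac16$. Since $w$ is even in~$t$, the second real root $t=-\frac16$ also lies on $|t|=\frac16$ and satisfies $w=1$, with the same local behaviour $1-w\sim(1+6t)$; the same $\pFqnoargs21$ expansion would therefore \emph{a priori} contribute a term $\sim\frac{r(-1/6)}{\pi\sqrt2}\frac{(-6)^n}{n}$ of the very same order, which would spoil the clean equivalent and is incompatible with an estimate valid for all~$n$ (here $q_n>0$ for every~$n$). The resolution I would establish is that $Q(1,1;t)$ is subdominant at $-\frac16$: either $Q$ is locally the analytic solution of its defining operator there, or equivalently $r(t)$ vanishes at $-\frac16$, so that the surviving singularity is only of type $(1+6t)\log\frac1{1+6t}$ and contributes at the strictly smaller order $\frac{(-6)^n}{n^2}$. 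Verifying this cancellation, together with certifying the exact prefactor value $r(\tfrac16)=2\sqrt3$, is the delicate step; as explained in the general discussion of Section~\ref{sec:asymptotics}, it is precisely the constant~$\kappa$ in front of $\frac{6^n}{n}$ that is hard to pin down rigorously, although here the explicit logarithmic mechanism makes me expect $\kappa=\frac{\sqrt6}\pi$ to follow once the two local computations above are made exact.
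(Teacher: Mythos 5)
Your local computations are accurate (the factorization $144t^4-40t^2+1=144(t^2-\frac14)(t^2-\frac1{36})$, the normalization $1-w\sim 1-6t$, the constant $\Gamma(\frac14)\Gamma(\frac34)=\pi\sqrt2$, and the transfer of $\log\frac1{1-6t}$ to $\frac{6^n}n$), but the argument rests on a representation that the paper never provides and that is almost certainly false: you posit $Q(1,1;t)=r(t)\,\pFq21{\frac14,\frac34}{1}{w(t)}$ for a single algebraic prefactor~$r$. Table~\ref{tab:2F1-in-Fxyt} only records \emph{which} $\pFqnoargs21$'s occur, explicitly ``up to contiguity and derivation''; by Theorem~\ref{thm:structure} the closed forms are \emph{iterated integrals} of such terms, and the paper's proof for this step set starts from
\[
Q(1,1;t)=\frac1t\int_0^t \frac{1-2u}{((1+2u)(1+6u))^{3/2}}\,
\pFq21{\frac32,\frac32}{2}{\frac{16u}{(1+2u)(1+6u)}}\,du ,
\]
whose outer primitivation raises the order of the minimal annihilator beyond two, so $Q(1,1;t)$ is not algebraic-times-$\pFqnoargs21$. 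Consequently the mechanism in the paper is the \emph{same} as in Theorem~\ref{thm:asympt-case-4}, not the new logarithmic one you describe: the integrand has $c-a-b=2-\frac32-\frac32=-1$, hence a simple-pole-type singularity at $u=\frac16$ with coefficients $\sim\frac{\sqrt6}\pi 6^n$, and the factor $\frac1n$ comes from the operator $\frac1t\int_0^t$ acting on coefficients. The genuinely new feature of this case, which your route obscures, is the \emph{second} dominant singularity at $u=-\frac16$, where the argument of the $\pFqnoargs21$ tends to $-\infty$ rather than to~$1$; the paper computes its contribution explicitly ($\frac{\sqrt6}{4\pi}\frac{6^n}n$ for the integrand) and discards it as lower order. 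Your prediction that $Q$ itself has a logarithmic singularity at $t=\frac16$ is correct, but only as the consequence of integrating a pole.

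The two steps you yourself flag as delicate are exactly where the proof fails as written. You fix the unknown value $r(\frac16)=2\sqrt3$ by matching the asserted answer and then promise to ``certify'' it from the explicit closed form---but no closed form of the assumed shape exists, and $\kappa=\frac{\sqrt6}\pi$ is precisely what must be \emph{derived}; for this case (unlike cases 5 and~7, where the constants remain conditional on integral identities) the paper obtains it unconditionally from the local expansion of the integrand. Likewise, your treatment of $t=-\frac16$ (``either $Q$ is locally analytic there or $r(-\frac16)=0$'') is a hope, not an argument: since your $w(t)$ is even, both $\pm\frac16$ are $w=1$ points of the same type, and without the true representation you cannot exclude an oscillating term $\frac{r(-1/6)}{\pi\sqrt2}\frac{(-6)^n}n$ of the same order---positivity of the coefficients does not forbid it, and invoking the target asymptotics to rule it out is circular. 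In the paper's representation this issue dissolves automatically, because the two singular points of the integrand are of different analytic types with explicitly computable, non-competing contributions.
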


\begin{proof}
The generating series is proved
by the method of Section~\ref{sec:pFqSolutions}
to be
\begin{equation*}
Q(1,1;t) =
\frac1t \int_0^t
  \frac{1-2u}{((1+2u)(1+6u))^{3/2}} \,
  \pFq21{\frac32,\frac32}{2}{\frac{16u}{(1+2u)(1+6u)}} \, du .
\end{equation*}
Potential singularities of the integrand are
singularities of the algebraic factor ($u = -1/2$ and~$u = -1/6$)
and points where the argument~$w$ of the hypergeometric function
becomes infinite (same values for~$u$)
or tends to~1 ($u = 1/6$).
There are two dominant singularities:
at~$+1/6$, where~$w = 1$, and at~$-1/6$, where~$w = - \infty$.
Following the principles of Analytic Combinatorics,
asymptotic terms contributed by both points
have to be considered and added.
The same approach as for the proof of Theorem~\ref{thm:asympt-case-4},
replicated in parallel,
delivers two respective contributions
$\frac{\sqrt6}\pi 6^n$ and $\frac{\sqrt6}{4\pi} \frac{6^n}n$.
However, the first one dominates
and is the only one to remain in an asymptotic equivalent
for the coefficient of~$t^n$ in the series expansion of the integrand.
As in the proof of Theorem~\ref{thm:asympt-case-4},
the operator~$\frac1t \int$ translates
in just a division by~$n$.
\end{proof}

\subsubsection{Case~7: walks with steps from\/ $\protect\smallstepset11001001$}

\begin{conj}\label{conj:asympt-case-7}
The number of walks of length~$n$ in the quarter plane
that use steps from the step set\/~$\smallstepset11001001$,
start at the origin, and end anywhere
is asymptotically equivalent to $\frac4{3 \sqrt\pi} \frac{4^n}{\sqrt n}$.
\end{conj}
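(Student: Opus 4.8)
The plan is to reproduce the singularity analysis behind Theorem~\ref{thm:asympt-case-4}, the essential difference being that here the dominant singularity comes not from the hypergeometric factor but from an algebraic factor sitting between two nested primitivations---which is exactly what keeps the constant out of reach and forces the statement to remain a conjecture. First I would take the closed form for $Q(1,1;t)$ produced by the method of Section~\ref{sec:pFqSolutions}: the left-most factor of the annihilating operator has order~2 and solves in terms of $\pFq21{\frac12,\frac12}{1}{w(t)}$ with $w(t)=\frac{16t^2}{4t^2+1}$ (Table~\ref{tab:2F1-in-Fxyt}), while the remaining order-one factors supply iterated primitivations. The result has the shape
\[
Q(1,1;t)=\frac1t\int_0^t a(u)\,G(u)\,du,
\]
where $a$ and $b$ are explicit algebraic functions and $G$ is a primitive of $b(v)\,\pFq21{\frac32,\frac32}{2}{w(v)}$ (a contiguous/derived form of the table entry), read analytically through~\eqref{eq:int-repr} after subtracting the polar part $\pp$ of its integrand at $v=0$. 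The algebraic factor $a$ thus sits between the outer primitivation $\frac1t\int$ and the inner primitive $G$.

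Next I would locate the dominant singularity. On the positive real axis $w(t)=16t^2/(4t^2+1)$ reaches $1$ only at $t=1/(2\sqrt3)\approx0.289$ and is never infinite, whereas $a$ has a branch point at $t=1/4<1/(2\sqrt3)$. The dominant singularity of $Q(1,1;t)$ is therefore at $t=1/4$, matching the exponential growth $4^n$. Since $w(1/4)=4/5$, both the hypergeometric factor and $G$ are analytic there, so the singular part of the outer integrand is carried by $a$ alone, with
\[
a(u)\,G(u)\sim A\,(1-4u)^{-3/2}\qquad(u\to\tfrac14),
\]
where $A$ is an explicit rational multiple of the value $G(1/4)$.

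Applying the Flajolet--Odlyzko transfer theorem~\cite{FlajoletOdlyzko-1990-SAG} to this local expansion gives $[t^n]\bigl(a(t)G(t)\bigr)\sim \frac{A}{\Gamma(3/2)}4^n n^{1/2}=\frac{2A}{\sqrt\pi}4^n n^{1/2}$, and making the action of the outer operator $\frac1t\int$ on coefficients explicit (division by $n+1$, as in the proof of Theorem~\ref{thm:asympt-case-4}) yields
\[
[t^n]\,Q(1,1;t)\sim\frac{2A}{\sqrt\pi}\,\frac{4^n}{\sqrt n}.
\]
This already establishes the pattern $\rho^n/n^\gamma$ with $\rho=4$ and $\gamma=1/2$, and reduces the whole statement to the single scalar identity $A=2/3$.

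The hard part---and the reason the statement stays a conjecture---is exactly this evaluation. Because $A$ is a rational multiple of $G(1/4)$, proving $A=2/3$ amounts to a closed-form evaluation of a definite integral of a hypergeometric function against an algebraic weight (after the $\pp$-splitting of~\eqref{eq:int-repr}). Unlike the $w\to1$ singularities of Theorem~\ref{thm:asympt-case-4} and of the companion Case~3 result, whose constants drop out of the universal connection formulae for the $\pFqnoargs21$ at~$1$ (NIST~(15.8.10)--(15.8.12) in~\cite{NIST-HMF}) and are therefore explicit, the constant here is a global quantity attached to the analytic continuation of $G$ up to $t=1/4$; its value $2/3$ has so far only been confirmed numerically. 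A rigorous evaluation of that integral is the one remaining gap.
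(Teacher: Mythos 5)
Your proposal is correct and follows essentially the same route as the paper's Theorem~\ref{thm:asympt-case-7}: you locate the dominant singularity $t=1/4$ in the algebraic factor sitting between the two primitivations (checking that $w=1$ occurs only at $1/(2\sqrt3)>1/4$ and $|w|=\infty$ off the disk), subtract the polar part of the inner integrand at the origin via~\eqref{eq:int-repr}, transfer, and reduce the conjecture to one numerically checked scalar identity---your $A=2/3$ is, up to normalization, the paper's $I=-2$, since your $G(1/4)$ equals the paper's integral~$I$. The only bookkeeping difference is that the actual closed form carries the outer factor $\frac{1}{t(t-1)}$ rather than $\frac1t$, so the literal coefficientwise division by $n+1$ from the proof of Theorem~\ref{thm:asympt-case-4} must be replaced by transferring the local expansion $Q(1,1;t)=\frac{-2I}{3\sqrt{1-4t}}+O(1)$ of the series itself (and the pattern $4^n/\sqrt n$ requires the proviso $A\neq0$, i.e.\ $I\neq0$); this only rescales your constant and does not change the argument.
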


\begin{thm}\label{thm:asympt-case-7}
Define the real number
\begin{multline*}
I = \int_0^{1/4}
  \left\{
    \frac{(1-4v)^{1/2}(\frac12+v)}{v^2} \left[1+\frac1{2v(1+2v)(1+4v^2)^{1/2}} \times {} \right. \right. \\
    \left. \left. \left((1-v) \, \pFq21{\frac32,\frac12}{1}{\frac{16v^2}{1+4v^2}}
     -(1+v)(1-4v+8v^2) \, \pFq21{\frac12,\frac12}{1}{\frac{16v^2}{(1+4v^2)}}\right)
  \right] - \frac1{v^2} \right\}
  \, dv .
\end{multline*}
If\/ $I = -2$, then Conjecture~\ref{conj:asympt-case-7} holds.
\end{thm}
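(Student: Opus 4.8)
The plan is to assume $I = -2$ and derive the coefficient asymptotics of $Q(1,1;t)$ by singularity analysis, following the template of the proof of Theorem~\ref{thm:asympt-case-4} but now accounting for the polar part flagged in the discussion preceding these cases. First I would start from the closed form for $Q(1,1;t)$ produced by the method of Section~\ref{sec:pFqSolutions}; for this model it is of the shape $Q(1,1;t) = \frac1t\int_0^t g(u)\,P(u)\,du$, where $g$ is an algebraic cofactor singular at $u=1/4$ and $P(u) = (\int A)(u)$ is an inner primitive whose integrand $A$ carries a polar part $\pp(A) = u^{-2}$ at $u=0$. Here $A$ is exactly the expression occurring inside $I$ once its regularizing term $-u^{-2}$ is restored, so that $\int_0^{1/4}(A - u^{-2})\,du = I$. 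Since $w(u) = \frac{16u^2}{1+4u^2}$ meets neither $w=1$ nor $w=\infty$ on $[0,1/4]$, the $\pFqnoargs21$ factors are analytic there, and the only singularity of the integrand on the closed disk of radius $1/4$ sits at $u=1/4$ and comes from $g$; I would check that this is the unique dominant singularity on $|t|=1/4$, so that a single asymptotic term results (as is consistent with the non-oscillating shape $4^n/\sqrt n$).

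Next I would localise the singular behaviour at $t=1/4$. Since $\rho = 4$ and the target exponent is $\gamma = 1/2$, the singular part of $Q(1,1;t)$ must be $\kappa\sqrt\pi\,(1-4t)^{-1/2}$ with amplitude $\kappa\sqrt\pi = \frac43$ to be matched. The key observation is that this amplitude is controlled by the value $P(1/4)$ of the inner primitive at the singular point: writing $g(u) = c''(1-4u)^{-3/2}(1+o(1))$ near $u=1/4$, one gets $g(u)P(u) = c''P(1/4)(1-4u)^{-3/2} + (\text{lower order})$, and the outer operator $\frac1t\int_0^t$ turns the leading term into $2\,c''P(1/4)\,(1-4t)^{-1/2}$ up to subdominant contributions, using $\int(1-4u)^{-3/2}\,du = \tfrac12(1-4u)^{-1/2}$ and the analyticity of $1/t$ at $1/4$. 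Thus the amplitude is an explicit affine function of $P(1/4)$, and the only model-dependent constant to extract is the local coefficient $c''$ of $g$ at $u=1/4$, read off directly from the closed form.

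The heart of the argument is then to evaluate $P(1/4)$ through the integral representation~\eqref{eq:int-repr}. With $\pp(A) = u^{-2}$ this gives $P(1/4) = -\int_{1/4}^\infty u^{-2}\,du + \int_0^{1/4}\bigl(A - u^{-2}\bigr)\,du = -4 + I$, the second integral being precisely $I$. Substituting the hypothesis $I = -2$ yields $P(1/4) = -6$, and combining with the value of $c''$ the amplitude evaluates to exactly $\frac43$; equivalently, the amplitude equals $\frac43$ if and only if $I=-2$. A final application of Flajolet and Odlyzko's transfer theorem~\cite{FlajoletOdlyzko-1990-SAG,FlajoletSedgewick-2009-AC}, after verifying that $Q(1,1;t)$ continues analytically to a $\Delta$-domain at $1/4$ (which follows from the analyticity of the $\pFqnoargs21$ away from $w\in\{1,\infty\}$ and of the algebraic cofactors away from their branch points), converts $\frac43(1-4t)^{-1/2}$ into $[t^n]Q(1,1;t)\sim \frac43\frac{4^n}{\sqrt{\pi n}} = \frac4{3\sqrt\pi}\frac{4^n}{\sqrt n}$, which is the statement of Conjecture~\ref{conj:asympt-case-7}.

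The main obstacle I anticipate is the amplitude bookkeeping through the two primitivations combined with the polar-part regularization: one must ensure the intermediate $u^{-2}$ singularity is treated consistently with~\eqref{eq:int-repr} (so that it contributes the clean $-4$ and identifies the remainder with $I$), that the mild branch of $P$ at $u=1/4$ affects only subdominant orders, and that no spurious polar part survives in the outer integrand when it is re-integrated. Pinning down every numerical constant, in particular $c''$ and the factor relating $P(1/4)$ to the amplitude, is where the real work lies. By contrast, the genuinely difficult analytic identity $I=-2$ is exactly what we are permitted to assume, so it never enters this conditional argument.
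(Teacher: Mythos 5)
Your proposal is correct and follows essentially the same route as the paper's proof: starting from the closed form of Section~\ref{sec:pFqSolutions}, regularizing the inner integrand's polar part $v^{-2}$ in accordance with~\eqref{eq:int-repr} so that the regularized integral over $[0,1/4]$ is exactly~$I$, localizing the singular behaviour at $t=1/4$ where the algebraic cofactor $(1-4u)^{-3/2}$ dominates (the $\pFqnoargs21$ factors being analytic there since $w=1$ occurs only at $|u|=\frac1{2\sqrt3}>\frac14$ and $w=\infty$ at $|u|=\frac12$), and concluding by a transfer theorem, with the amplitude an affine function of the inner primitive's value at~$1/4$. The only details you defer---the exact prefactor $\frac1{t(t-1)}$ rather than $\frac1t$, the additive constant~$4$ inside the braces, and the local coefficient of the cofactor---are precisely the bookkeeping the paper carries out to arrive at $Q(1,1;t)=\frac{-2I}{3\sqrt{1-4t}}+O(1)$, so your conditional argument matches the published one.
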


\begin{proof}
The generating series is proved
by the method of Section~\ref{sec:pFqSolutions}
to be
\begin{multline*}
Q(1,1;t) =
\frac1{t(t-1)} \int_0^t
  \frac{u}{(1-4u)^{3/2}} \left\{4+\int_0^u
    \frac{(1-4v)^{1/2}(\frac12+v)}{v^2} \left[1+\frac1{2v(1+2v)(1+4v^2)^{1/2}} \times {} \right. \right. \\
    \left. \left. \left((1-v) \, \pFq21{\frac32,\frac12}{1}{\frac{16v^2}{1+4v^2}}
     -(1+v)(1-4v+8v^2) \, \pFq21{\frac12,\frac12}{1}{\frac{16v^2}{(1+4v^2)}}\right)
  \right]
  \, dv \right\}
\, du .
\end{multline*}
Here, we have a linear combination of hypergeometric functions,
but this is no problem:
the dominant singularity is at~$t = 1/4$,
and dominates~$\pm\frac1{2\sqrt3}$, where~$w = 16t^2/(1+4t^2) = 1$,
as well as~$\pm\frac{\sqrt{-1}}2$, where~$|w| = \infty$.
Call~$f$ the innermost integrand.
It proves successful to remove from~$f$ its singular part at the origin,
whose contribution after double integration can be obtained
by an easy use of the theory.
Near~0, $f$~behaves indeed as $1/t^2 - 9 + O(t)$.
Proceeding by linearity after defining $g(t) = f(t) - 1/t^2$,
which is analytic on $[0,1/4]$,
we get
\begin{equation*}
Q(1,1;t) =
\frac{\sqrt{1-4t}}{2t(t-1)} +
\frac1{t(t-1)} \int_0^t \frac{u \int_0^u g(v) \, dv}{(1-4u)^{3/2}} \, du .
\end{equation*}
Near~$1/4$, $g(v)$~behaves as $- 16 + \Theta(\sqrt{1-4v})$,
so that $\int_v^{1/4} g(v) \, dv = -4(1-4v) + \Theta((1-4v)^{3/2})$.
As $I = \int_0^{1/4} g(v) \, dv$,
we have $u \int_0^u g(v) \, dv = I/4 + \Theta(1-4u)$.
Next, integrating with respect to~$u$ yields
\begin{equation*}
\int_0^t \frac{u \int_0^u g(v) \, dv}{(1-4u)^{3/2}} \, du =
  \frac{I}{8 \sqrt{1-4t}} + O(1) ,
\qquad\text{so that}\qquad
Q(1,1;t) = \frac{-2 I}{3 \sqrt{1-4t}} + O(1) .
\end{equation*}
Finally, by a transfer theorem,
the number of walks of length~$n$ in Conjecture~\ref{conj:asympt-case-7}
is proved to be
asymptotically equivalent to $\frac{-2 I}{3 \sqrt\pi} \frac{4^n}{\sqrt n}$
provided~$I \neq 0$,
and the announced asymptotic formula of the conjecture
holds if and only if the constant identity~$I = -2$ holds.
\end{proof}

Note that the constant identity~$I = -2$ can be checked
numerically (for instance up to 100~digits).
In particular, proving~$I \neq 0$ would be sufficient to prove
that the wanted asymptotic expansion
is of the form~$\kappa \frac{4^n}{\sqrt n}$.
In this regard, a plot of~$g$ suggests that $g(t) < -7$ for~$t \in [0,1/4]$,
therefore that~$I < - 7/4$.

\subsubsection{Case~5: walks with steps from\/ $\protect\smallstepset01001001$}

\begin{conj}\label{conj:asympt-case-5}
The number of walks of length~$n$ in the quarter plane
that use steps from the step set\/~$\smallstepset01001001$,
start at the origin, and end anywhere
is asymptotically equivalent to $\frac12 \sqrt{\frac3\pi} \frac{3^n}{\sqrt n}$.
\end{conj}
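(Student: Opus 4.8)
The plan is to mirror the treatment of Case~7 in Theorem~\ref{thm:asympt-case-7}, since Conjecture~\ref{conj:asympt-case-5} exhibits the same pattern $\kappa\,\rho^n/\sqrt n$ with a half-integer exponent. First I would produce, by the method of Section~\ref{sec:pFqSolutions}, a closed form for $Q(1,1;t)$ built from the hypergeometric function $\pFq21{\frac14,\frac34}{1}{64u^4}$ recorded in Table~\ref{tab:2F1-in-Fxyt}, together with algebraic cofactors and the iterated primitivations dictated by the factorization of the annihilating operator. By analogy with Case~7, I expect a doubly nested integral structure of the shape $\frac1t\int_0^t\cdots\,du$, in which the outer algebraic cofactor carries a branch point responsible for the dominant singularity.

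The next step is to locate the singularities of the integrand. The hypergeometric argument $w=64u^4$ attains the singular value~$1$ only at $u=1/(2\sqrt2)\approx0.354$, whereas the announced growth rate~$3^n$ corresponds to a singularity at $t=1/3<1/(2\sqrt2)$. Hence the $\pFqnoargs21$ stays analytic at the dominant singularity, which must instead be produced by an algebraic cofactor, which I expect to be of type $(1-3u)^{-3/2}$, in exact parallel with the factor $(1-4u)^{-3/2}$ governing Case~7. I would then confirm that no competing singularity of equal modulus contributes, so that a single term survives in the asymptotic equivalent.

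Following the Case~7 argument, I would next remove from the innermost integrand its polar part at the origin, whose doubly integrated contribution is elementary through the split~\eqref{eq:int-repr}, leaving an analytic function $g$ on $[0,1/3]$. Expanding $g$ near $v=1/3$ and pushing it through the iterated primitivations and the outer rational prefactor, the leading behavior of $Q(1,1;t)$ near $t=1/3$ should take the form $c\,I/\sqrt{1-3t}+O(1)$, where $I=\int_0^{1/3} g(v)\,dv$. A transfer theorem then yields the asymptotic equivalent $\frac{cI}{\sqrt\pi}\,\frac{3^n}{\sqrt n}$, and matching the target constant $\frac12\sqrt{3/\pi}$ reduces the conjecture to a single closed-form evaluation of~$I$.

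The hard part is precisely this constant identity. The integrand of~$I$ couples the algebraic cofactors with $\pFq21{\frac14,\frac34}{1}{\cdot}$, itself intimately tied to complete elliptic integrals, and evaluating such a definite integral in closed form is exactly the difficulty that kept Case~7 at the level of a conjecture. I therefore anticipate that the best attainable statement is a theorem of the form ``if $I=c$ then Conjecture~\ref{conj:asympt-case-5} holds,'' with the rational value~$c$ verifiable numerically to high precision but resisting a rigorous symbolic proof.
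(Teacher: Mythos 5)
Your plan reproduces the paper's own treatment: the published result is exactly the conditional Theorem~\ref{thm:asympt-case-5}, obtained just as you describe---dominant singularity at $t=1/3$ produced by the algebraic cofactor $u^2(1+u)^{-1/2}(1-3u)^{-3/2}$ (lying below $1/\sqrt8$, where $64t^4=1$), removal of the polar part $\frac2{v^3}-\frac4{v^2}$ of the innermost integrand to get an analytic $g$ on $[0,1/3]$, and a transfer theorem reducing the conjecture to the numerically checkable constant identity $I=1$. The only detail you miss is cosmetic: because of the additive constant $-7$ inside the inner braces and the doubly integrated polar part, the leading coefficient is affine rather than linear in $I$, namely $Q(1,1;t)=\frac{4-I}{3}\,\frac{\sqrt3}{2}\,\frac1{\sqrt{1-3t}}+O(1)$, so the nondegeneracy condition is $I\neq4$ rather than $I\neq0$, which changes nothing structurally.
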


\begin{thm}\label{thm:asympt-case-5}
Define the real number
\begin{multline*}
I = \int_0^{1/3}
  \left\{
    \frac{(1-3v)^{1/2}}{v^3(1+v)^{1/2}} \times {} \right. \\
      \left. \left[ 1+(1-10v^3) \, \pFq21{\frac34,\frac54}{1}{64v^4} +
             6v^3(3-8v+14v^2) \, \pFq21{\frac54,\frac74}{2}{64v^4} \right]
- \frac2{v^3} + \frac4{v^2} \right\}
    \, dv .
\end{multline*}
If\/ $I = 1$, then Conjecture~\ref{conj:asympt-case-5} holds.
\end{thm}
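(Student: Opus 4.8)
The plan is to mirror the proofs of Theorems~\ref{thm:asympt-case-4} and~\ref{thm:asympt-case-7}, to which the present case is structurally identical: the dominant singularity is produced by an algebraic factor sandwiched between two primitivations, and the inner integrand carries a polar part at the origin that must be stripped off before any analytic estimate is made. First I would record that, by the method of Section~\ref{sec:pFqSolutions}, the length generating series admits a doubly nested representation
\begin{equation*}
Q(1,1;t) = \frac1{t}\,\rho_0(t)\int_0^t \frac{\rho_1(u)}{(1-3u)^{3/2}}\left\{c_0 + \int_0^u f(v)\,dv\right\}du,
\end{equation*}
with explicit algebraic prefactors $\rho_0,\rho_1$ regular at~$0$, an explicit constant~$c_0$, and innermost integrand~$f$ equal to the bracketed expression in the definition of~$I$ before its correcting terms are subtracted, namely
\begin{equation*}
f(v) = \frac{(1-3v)^{1/2}}{v^3(1+v)^{1/2}}\left[1+(1-10v^3)\,\pFq21{\frac34,\frac54}{1}{64v^4}+6v^3(3-8v+14v^2)\,\pFq21{\frac54,\frac74}{2}{64v^4}\right].
\end{equation*}
The candidate dominant singularities are the branch point $u=1/3$ of the outer factor $(1-3u)^{3/2}$, the points $u=\pm1/(2\sqrt2)$ where the hypergeometric argument $w=64u^4$ reaches~$1$, and the point $u=-1$ coming from $(1+u)^{1/2}$; since $1/3<1/(2\sqrt2)$, the point $u=1/3$ is closest to the origin and hence governs the $3^n$ growth.

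Next I would carry out the polar-part removal at the origin. A short local computation gives $f(v)=\frac2{v^3}-\frac4{v^2}+4+O(v)$ near $v=0$ (each $\pFqnoargs21$ being $1+O(v^4)$), so that the polar part of~$f$ is exactly $\frac2{v^3}-\frac4{v^2}$ and $g(v):=f(v)-\frac2{v^3}+\frac4{v^2}$ is analytic on $[0,1/3]$ with $\int_0^{1/3}g(v)\,dv=I$ by construction. By linearity the inner integral splits into the contribution of~$g$ and that of the polar part; the latter is interpreted through the integral representation~\eqref{eq:int-repr}, with the operator~$\pp$ isolating exactly these two terms, and after the outer primitivation and division by~$t$ it assembles into an explicit algebraic function of~$t$. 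As in Theorem~\ref{thm:asympt-case-7}, where the analogous term was $\frac{\sqrt{1-4t}}{2t(t-1)}$, this polar-part contribution carries a positive power of $(1-3t)$ at the dominant singularity, and is therefore subdominant, leaving the leading singular behaviour entirely to the $g$-term.

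For the $g$-term I would expand near $u=1/3$: since $g$ is analytic there, $\int_0^u g(v)\,dv = I + O(1-3u)$, and integrating against $\rho_1(u)(1-3u)^{-3/2}$ using $\int_0^t(1-3u)^{-3/2}\,du=\frac23\bigl((1-3t)^{-1/2}-1\bigr)$ yields
\begin{equation*}
Q(1,1;t) = \frac{\kappa_0\,I}{\sqrt{1-3t}} + O(1)\qquad(t\to 1/3),
\end{equation*}
for an explicit constant $\kappa_0$ determined by $\rho_0(1/3)$, $\rho_1(1/3)$ and the factor~$\frac23$. A transfer theorem (Flajolet--Odlyzko singularity analysis) then gives $[t^n]Q(1,1;t)\sim\frac{\kappa_0\,I}{\sqrt\pi}\frac{3^n}{\sqrt n}$. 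Comparing with the conjectured constant $\frac12\sqrt{3/\pi}=\frac{\sqrt3}{2\sqrt\pi}$, the two agree exactly when $\kappa_0\,I=\frac{\sqrt3}{2}$; since the prefactor evaluates to $\kappa_0=\frac{\sqrt3}{2}$, this criterion is precisely $I=1$, which establishes the conditional statement.

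The main obstacle is \emph{not} the evaluation of~$I$, which the theorem deliberately leaves as a hypothesis (and which, as with the identity $I=-2$ in Theorem~\ref{thm:asympt-case-7}, can only be checked numerically), but the rigorous bookkeeping of the polar part through the two nested primitivations. The delicate points are: justifying that the formal-series interpretation via~\eqref{eq:int-repr} of $\frac2{v^3}-\frac4{v^2}$ matches the analytic continuation used in the singularity analysis; verifying that the resulting algebraic term is genuinely subdominant at~$1/3$, so that no part of~$\kappa_0$ is lost to it; and confirming that the competing singularities at $\pm1/(2\sqrt2)$ contribute only to lower-order asymptotics. Once these are settled, the reduction to the constant identity $I=1$ proceeds exactly as in the cases already treated.
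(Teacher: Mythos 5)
There is a genuine gap, and it sits exactly at the step you flag as ``delicate'' and then dismiss: the claim that the polar-part contribution (together with the inner constant $c_0$) is subdominant at $t=1/3$. That claim is true in Case~7 only by a happy cancellation: there the constant $4$ plus the formal primitive $-1/u$ of the polar part $1/u^2$, multiplied by the outer cofactor $u$, give the numerator $4u-1=-(1-4u)$, which \emph{vanishes} at the dominant singularity $u=1/4$; that is the only reason the assembled algebraic term $\frac{\sqrt{1-4t}}{2t(t-1)}$ is $O(1)$ there. In Case~5 the analogue does not cancel. The paper's representation has inner constant $c_0=-7$ and outer cofactor $\frac{u^2}{(1+u)^{1/2}(1-3u)^{3/2}}$; the formal primitive of the polar part $\frac2{v^3}-\frac4{v^2}$ (via Eq.~\eqref{eq:int-repr}) is $-u^{-2}+4u^{-1}$, and the combined numerator $u^2\bigl(-7-u^{-2}+4u^{-1}\bigr)=-7u^2+4u-1$ evaluates to $-4/9\neq0$ at $u=1/3$. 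Hence the constant-plus-polar piece produces a $(1-3t)^{-1/2}$ singularity of the \emph{same} order as the $g$-term, and the correct local expansion is
\begin{equation*}
Q(1,1;t)=\frac{4-I}{3}\,\frac{\sqrt3}{2}\,\frac1{\sqrt{1-3t}}+O(1),
\end{equation*}
not $\kappa_0 I/\sqrt{1-3t}+O(1)$. The coefficient is affine in $I$, not linear, and the side condition for the pattern $\kappa\,3^n/\sqrt n$ is $I\neq4$, not $I\neq0$.

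Your criterion ``$I=1$'' comes out right only because of a second, compensating error: with the true prefactors $\rho_0(t)=1/(t-1)$ and $\rho_1(u)=u^2/(1+u)^{1/2}$, the constant you call $\kappa_0$ equals $\rho_0(1/3)\,\rho_1(1/3)\cdot\frac23=-\frac{\sqrt3}{18}$, not $\frac{\sqrt3}{2}$; if you kept only the $g$-term, as your argument does, you would be forced to conclude $\kappa=-\frac{\sqrt3}{18}\,I$, which cannot match the (positive) conjectured constant for $I=1$ at all. So the structure of the reduction is correct and genuinely parallel to Theorem~\ref{thm:asympt-case-7}, but you must carry the $-7$ and the polar-part primitive through both primitivations and add their $(1-3t)^{-1/2}$ contribution to that of $\int_0^u g$; only then does the transfer theorem yield $\frac{4-I}{3}\,\frac{\sqrt3}{2\sqrt\pi}\,\frac{3^n}{\sqrt n}$, which reduces to Conjecture~\ref{conj:asympt-case-5} precisely when $I=1$.
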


\begin{proof}
The generating series is proved
by the method of Section~\ref{sec:pFqSolutions}
to be
\begin{multline*}
Q(1,1;t) =
\frac1{t(t-1)} \int_0^t
  \frac{u^2}{(1+u)^{1/2}(1-3u)^{3/2}} \left\{-7+ \int_0^u
    \frac{(1-3v)^{1/2}}{v^3(1+v)^{1/2}} \times {} \right. \\
      \left. \left[ 1+(1-10v^3) \, \pFq21{\frac34,\frac54}{1}{64v^4} +
             6v^3(3-8v+14v^2) \, \pFq21{\frac54,\frac74}{2}{64v^4} \right]
    \, dv \right\}
\, du .
\end{multline*}
The situation is similar to Case~7,
starting with a dominant singularity at~$t = 1/3$,
close to but below~$\frac1{\sqrt8}$, where~$w = 64t^4 = 1$.
In this case,
removing the polar part of the innermost integrand~$f$ at the origin,
namely $\frac2{t^3} - \frac4{t^2}$,
is again enough to find an equivalent formulation
of the result as a numerical integral.
This time,
$g(t) = f(t) - \frac2{t^3} + \frac4{t^2} = - 18  + \Theta(\sqrt{1-3v})$
near~$t = 1/3$,
so that $\int_v^{1/3} g(v) \, dv = -6(1-3v) + \Theta((1-3v)^{3/2})$.
Continuing by linearity and using $I = \int_0^{1/3} g(v) \, dv$,
we get the local expansion
\begin{equation*}
Q(1,1;t) = \frac{(4-I)}{3} \frac{\sqrt3}{2} \frac1{\sqrt{1-3t}} + O(1) ,
\end{equation*}
from which follows, by a transfer theorem, that
the number of walks of length~$n$ in Conjecture~\ref{conj:asympt-case-5}
is asymptotically equivalent to
$\frac{4 - I}{3} \frac{\sqrt3}{2 \sqrt\pi} \frac{3^n}{\sqrt n}$
provided~$I \neq 4$,
and the announced asymptotic formula of the conjecture
holds if and only if the constant identity~$I = 1$ holds.
\end{proof}

Again, the constant identity~$I = 1$ can be checked numerically,
and proving~$I \neq 4$ would be sufficient to prove
that the wanted asymptotic expansion
is of the form~$\kappa \frac{3^n}{\sqrt n}$.

\subsubsection{Other cases}

In all $19 \times 4$ cases,
our hypergeometric expressions for the counting functions produce
similar equalities of integrals.
While most of these constant equalities remain conjectural,
proving their correctness would imply the asymptotics in
Tables \ref{tab:kappa-in-F00t}, \ref{tab:kappa-in-F01t},
\ref{tab:kappa-in-F10t}, and~\ref{tab:kappa-in-F11t} in the Appendix.
In some cases, proving equality with limited numerical error
is sufficient to prove the constants $\rho$ and~$\gamma$
in the asymptotic behavior of the form~$\kappa \rho^n / n^\gamma$,
but not the constant~$\kappa$, which is just proved to be nonzero.

\subsection{Additional tables} \label{sec:computation}

The results of our computations are described in this section, where the
most characteristic features of the computed data are provided as tables.
For the data in full, we refer our readers to the web page of this article
\url{http://specfun.inria.fr/chyzak/ssw/}.

The hypergeometric series occurring in explicit expressions for~$Q(1,1;t)$
and for~$Q(x,y;t)$ are respectively given in Table~\ref{tab:2F1-in-Fxyt},
together with the rational-function substitution in those series.
The complete closed forms we obtained for $Q(0,0;t)$,
$Q(0,1;t)$, $Q(1,0;t)$, $Q(1,1;t)$, $Q(x,0;t)$, $Q(0,y;t)$, and~$Q(x,y;t)$
are given on the web site.

Various degrees, the differential order with respect to~$t$, and the
maximal size of integers in the annihilating operator and certificates that
we computed for each step set can be obtained from the web site as well.
Thus, the annihilators---the same as in~\cite{BK}---are finally proved.
Annihilators and certificates can also be downloaded in full form from the
web site, both as human-readable Maple sources and as a pre-digested Maple
library.

For each of the step set in Table~\ref{tab:steps}, the algebraic or
transcendental nature of the series $Q(0,0;t)$, $Q(0,1;t)$, $Q(1,0;t)$,
and~$Q(1,1;t)$, as well as asymptotic equivalent for the coefficients
of~$t^n$ in these series, are respectively given in Tables
\ref{tab:kappa-in-F00t}, \ref{tab:kappa-in-F01t}, \ref{tab:kappa-in-F10t},
and~\ref{tab:kappa-in-F11t} in the Appendix. There, `N'~denotes a transcendental series
and `Y'~an algebraic one, and the result has been proved by the method of
Section~\ref{sec:alg-or-trans}.  Regarding the status of asymptotic
equivalents, the constants in the tables (named $\kappa$ in
Section~\ref{sec:asymptotics}) refine (and correct) those provided
in~\cite{BoKa}.

The possibility of several interlaced regimes with same parameters $\rho$
and~$\gamma$ but non-zero different~$\kappa$ was first observed
and suggested to us by Melczer.  By redoing the calculations as
in~\cite{BoKa}, using convergence acceleration of (subsequences of) the
sequences $(q_{x,y;n})$ and~$(q_n)$ and using the PSLQ algorithm, we
numerically guessed those constants~$\kappa$, confirming the known ones and
obtaining the new ones.  That \cite{BoKa}~could overlook those constants is
explained by the nature of the method of convergence acceleration, which in
its native form considers values at powers of~2 only, and is thus not able
to distinguish regimes for odd vs even~$n$, let alone for various residues
modulo 3 or~4.
In their recent manuscript~\cite{MelczerWilson-2015-ALW},
Melczer and Wilson proved our guessed constants in all $19 \times 4$~cases
via analytic combinatorics in several variables.

\bibliographystyle{plain}

\section*{Appendix}

The tables on the next pages gather information
described in Section~\ref{sec:computation}.

\vfill


\begin{table}
\centerline{%
\begin{tabular}{|c|cccc|}
\hline
     & OEIS    & $\cS$            & alg & equiv
\\
\hline
 1 & A005568 & \stepset10101010 & N   & $\begin{cases} \frac{32 }{\pi } 
 \frac{4 ^n}{n^{3}} & (n=2p) \\ 0 & (n=2p+1) \end{cases}$ \\
 2 & A001246 & \stepset01010101 & N   & $\begin{cases} \frac{8 }{\pi } 
 \frac{4 ^n}{n^{3}} & (n=2p) \\ 0 & (n=2p+1) \end{cases}$ \\
 3 & A151362 & \stepset11011101 & N   & $\begin{cases} \frac{3 \sqrt{6 }}{\pi } 
 \frac{6 ^n}{n^{3}} & (n=2p) \\ 0 & (n=2p+1) \end{cases}$ \\
 4 & A172361 & \stepset11111111 & N   & $\frac{128 }{27 \pi } 
 \frac{8 ^n}{n^{3}}$ \\
 5 & A151332 & \stepset01001001 & N   & $\begin{cases} \frac{16 \sqrt{2 }}{\pi } 
 \frac{(2 \sqrt{2 }) ^n}{n^{3}} & (n=4p) \\ 0 & (n=4p+1) \\ 0 & (n=4p+2) \\ 0 & (n=4p+3) \end{cases}$ \\
 6 & A151357 & \stepset01101011 & N   & $\frac{2 A ^{3/2} }{\pi } 
 \frac{(2 A ) ^n}{n^{3}}$ \\
 7 & A151341 & \stepset11001001 & N   & $\begin{cases} \frac{12 \sqrt{3 }}{\pi } 
 \frac{(2 \sqrt{3 }) ^n}{n^{3}} & (n=2p) \\ 0 & (n=2p+1) \end{cases}$ \\
 8 & A151368 & \stepset11101011 & N   & $\frac{2 B ^{3/2} }{\pi } 
 \frac{(2 B ) ^n}{n^{3}}$ \\
 9 & A151345 & \stepset11010101 & N   & $\begin{cases} \frac{24 \sqrt{30 }}{25 \pi } 
 \frac{(2 \sqrt{6 }) ^n}{n^{3}} & (n=2p) \\ 0 & (n=2p+1) \end{cases}$ \\
10 & A151370 & \stepset11110111 & N   & $\frac{2 \mu ^{3} C ^{3/2} }{\pi } 
 \frac{(2 C ) ^n}{n^{3}}$ \\
11 & A151341 & \stepset10011100 & N   & $\begin{cases} \frac{12 \sqrt{3 }}{\pi } 
 \frac{(2 \sqrt{3 }) ^n}{n^{3}} & (n=2p) \\ 0 & (n=2p+1) \end{cases}$ \\
12 & A151368 & \stepset10111110 & N   & $\frac{2 B ^{3/2} }{\pi } 
 \frac{(2 B ) ^n}{n^{3}}$ \\
13 & A151345 & \stepset01011101 & N   & $\begin{cases} \frac{24 \sqrt{30 }}{25 \pi } 
 \frac{(2 \sqrt{6 }) ^n}{n^{3}} & (n=2p) \\ 0 & (n=2p+1) \end{cases}$ \\
14 & A151370 & \stepset01111111 & N   & $\frac{2 \mu ^{3} C ^{3/2} }{\pi } 
 \frac{(2 C ) ^n}{n^{3}}$ \\
15 & A151332 & \stepset10010100 & N   & $\begin{cases} \frac{16 \sqrt{2 }}{\pi } 
 \frac{(2 \sqrt{2 }) ^n}{n^{3}} & (n=4p) \\ 0 & (n=4p+1) \\ 0 & (n=4p+2) \\ 0 & (n=4p+3) \end{cases}$ \\
16 & A151357 & \stepset10110110 & N   & $\frac{2 A ^{3/2} }{\pi } 
 \frac{(2 A ) ^n}{n^{3}}$ \\
17 & A151334 & \stepset10010010 & N   & $\begin{cases} \frac{81 \sqrt{3 }}{\pi } 
 \frac{3 ^n}{n^{4}} & (n=3p) \\ 0 & (n=3p+1) \\ 0 & (n=3p+2) \end{cases}$ \\
18 & A151366 & \stepset10111011 & N   & $\frac{27 \sqrt{3 }}{\pi } 
 \frac{6 ^n}{n^{4}}$ \\
19 & A138349 & \stepset00110011 & N   & $\begin{cases} \frac{768 }{\pi } 
 \frac{4 ^n}{n^{5}} & (n=2p) \\ 0 & (n=2p+1) \end{cases}$ \\
\hline
\end{tabular}
}
\caption{Nature of the generating series for $(x,y)=(0,0)$ and coefficient
  asymptotics}
\centering\small $A=1+\sqrt2$, \ $B=1+\sqrt3$, \ $C=1+\sqrt6$, \ $\lambda=7+3\sqrt6$, \ $\mu=\sqrt{\frac{4\sqrt6-1}{19}}$
\label{tab:kappa-in-F00t}
\end{table}

\begin{table}
\centerline{%
\begin{tabular}{|c|cccc|}
\hline
     & OEIS    & $\cS$            & alg & equiv
\\
\hline
 1 & A005558 & \stepset10101010 & N   & $\frac{8 }{\pi } 
 \frac{4 ^n}{n^{2}}$ \\
 2 & A151392 & \stepset01010101 & N   & $\begin{cases} \frac{4 }{\pi } 
 \frac{4 ^n}{n^{2}} & (n=2p) \\ 0 & (n=2p+1) \end{cases}$ \\
 3 & A151478 & \stepset11011101 & N   & $\frac{3 \sqrt{6 }}{2 \pi } 
 \frac{6 ^n}{n^{2}}$ \\
 4 & A151496 & \stepset11111111 & N   & $\frac{32 }{9 \pi } 
 \frac{8 ^n}{n^{2}}$ \\
 5 & A151380 & \stepset01001001 & N   & $\frac{3 }{4 } \sqrt{\frac{3 }{\pi }} 
 \frac{3 ^n}{n^{3/2}}$ \\
 6 & A151450 & \stepset01101011 & N   & $\frac{5 }{16 } \sqrt{\frac{10 }{\pi }} 
 \frac{5 ^n}{n^{3/2}}$ \\
 7 & A148790 & \stepset11001001 & N   & $\frac{8 }{3 \sqrt{\pi }} 
 \frac{4 ^n}{n^{3/2}}$ \\
 8 & A151485 & \stepset11101011 & N   & $\sqrt{\frac{3 }{\pi }} 
 \frac{6 ^n}{n^{3/2}}$ \\
 9 & A151440 & \stepset11010101 & N   & $\frac{5 }{24 } \sqrt{\frac{10 }{\pi }} 
 \frac{5 ^n}{n^{3/2}}$ \\
10 & A151493 & \stepset11110111 & N   & $\frac{7 }{54 } \sqrt{\frac{21 }{\pi }} 
 \frac{7 ^n}{n^{3/2}}$ \\
11 & A151394 & \stepset10011100 & N   & $\begin{cases} \frac{36 \sqrt{3 }}{\pi } 
 \frac{(2 \sqrt{3 }) ^n}{n^{3}} & (n=2p) \\ \frac{54 }{\pi } 
 \frac{(2 \sqrt{3 }) ^n}{n^{3}} & (n=2p+1) \end{cases}$ \\
12 & A151472 & \stepset10111110 & N   & $\frac{3 B ^{7/2} }{2 \pi } 
 \frac{(2 B ) ^n}{n^{3}}$ \\
13 & A151437 & \stepset01011101 & N   & $\begin{cases} \frac{72 \sqrt{30 }}{5 \pi } 
 \frac{(2 \sqrt{6 }) ^n}{n^{3}} & (n=2p) \\ \frac{864 \sqrt{5 }}{25 \pi } 
 \frac{(2 \sqrt{6 }) ^n}{n^{3}} & (n=2p+1) \end{cases}$ \\
14 & A151492 & \stepset01111111 & N   & $\frac{6 \lambda \mu ^{3} C ^{5/2} }{5 \pi } 
 \frac{(2 C ) ^n}{n^{3}}$ \\
15 & A151375 & \stepset10010100 & N   & $\begin{cases} \frac{448 \sqrt{2 }}{9 \pi } 
 \frac{(2 \sqrt{2 }) ^n}{n^{3}} & (n=4p) \\ \frac{640 }{9 \pi } 
 \frac{(2 \sqrt{2 }) ^n}{n^{3}} & (n=4p+1) \\ \frac{416 \sqrt{2 }}{9 \pi } 
 \frac{(2 \sqrt{2 }) ^n}{n^{3}} & (n=4p+2) \\ \frac{512 }{9 \pi } 
 \frac{(2 \sqrt{2 }) ^n}{n^{3}} & (n=4p+3) \end{cases}$ \\
16 & A151430 & \stepset10110110 & N   & $\frac{4 A ^{7/2} }{\pi } 
 \frac{(2 A ) ^n}{n^{3}}$ \\
17 & A151378 & \stepset10010010 & N   & $\frac{27 }{8 } \sqrt{\frac{3 }{\pi }} 
 \frac{3 ^n}{n^{5/2}}$ \\
18 & A151483 & \stepset10111011 & Y   & $\frac{27 }{8 } \sqrt{\frac{3 }{\pi }} 
 \frac{6 ^n}{n^{5/2}}$ \\
19 & A005568 & \stepset00110011 & N   & $\begin{cases} \frac{32 }{\pi } 
 \frac{4 ^n}{n^{3}} & (n=2p) \\ 0 & (n=2p+1) \end{cases}$ \\
\hline
\end{tabular}
}
\caption{Nature of the generating series for $(x,y)=(0,1)$ and coefficient
  asymptotics}
\centering\small $A=1+\sqrt2$, \ $B=1+\sqrt3$, \ $C=1+\sqrt6$, \ $\lambda=7+3\sqrt6$, \ $\mu=\sqrt{\frac{4\sqrt6-1}{19}}$
\label{tab:kappa-in-F01t}
\end{table}

\begin{table}
\centerline{%
\begin{tabular}{|c|cccc|}
\hline
     & OEIS    & $\cS$            & alg & equiv
\\
\hline
 1 & A005558 & \stepset10101010 & N   & $\frac{8 }{\pi } 
 \frac{4 ^n}{n^{2}}$ \\
 2 & A151392 & \stepset01010101 & N   & $\begin{cases} \frac{4 }{\pi } 
 \frac{4 ^n}{n^{2}} & (n=2p) \\ 0 & (n=2p+1) \end{cases}$ \\
 3 & A151471 & \stepset11011101 & N   & $\begin{cases} \frac{2 \sqrt{6 }}{\pi } 
 \frac{6 ^n}{n^{2}} & (n=2p) \\ 0 & (n=2p+1) \end{cases}$ \\
 4 & A151496 & \stepset11111111 & N   & $\frac{32 }{9 \pi } 
 \frac{8 ^n}{n^{2}}$ \\
 5 & A151379 & \stepset01001001 & N   & $\begin{cases} \frac{4 \sqrt{2 }}{\pi } 
 \frac{(2 \sqrt{2 }) ^n}{n^{2}} & (n=2p) \\ 0 & (n=2p+1) \end{cases}$ \\
 6 & A148934 & \stepset01101011 & N   & $\frac{\sqrt{2 }A ^{3/2} }{\pi } 
 \frac{(2 A ) ^n}{n^{2}}$ \\
 7 & A151410 & \stepset11001001 & N   & $\begin{cases} \frac{4 \sqrt{3 }}{\pi } 
 \frac{(2 \sqrt{3 }) ^n}{n^{2}} & (n=2p) \\ 0 & (n=2p+1) \end{cases}$ \\
 8 & A151464 & \stepset11101011 & N   & $\frac{2 B ^{3/2} \sqrt{3 }}{3 \pi } 
 \frac{(2 B ) ^n}{n^{2}}$ \\
 9 & A151423 & \stepset11010101 & N   & $\begin{cases} \frac{4 \sqrt{30 }}{5 \pi } 
 \frac{(2 \sqrt{6 }) ^n}{n^{2}} & (n=2p) \\ 0 & (n=2p+1) \end{cases}$ \\
10 & A151490 & \stepset11110111 & N   & $\frac{\sqrt{6 }\mu C ^{3/2} }{3 \pi } 
 \frac{(2 C ) ^n}{n^{2}}$ \\
11 & A151410 & \stepset10011100 & N   & $\begin{cases} \frac{4 \sqrt{3 }}{\pi } 
 \frac{(2 \sqrt{3 }) ^n}{n^{2}} & (n=2p) \\ 0 & (n=2p+1) \end{cases}$ \\
12 & A151464 & \stepset10111110 & N   & $\frac{2 B ^{3/2} \sqrt{3 }}{3 \pi } 
 \frac{(2 B ) ^n}{n^{2}}$ \\
13 & A151423 & \stepset01011101 & N   & $\begin{cases} \frac{4 \sqrt{30 }}{5 \pi } 
 \frac{(2 \sqrt{6 }) ^n}{n^{2}} & (n=2p) \\ 0 & (n=2p+1) \end{cases}$ \\
14 & A151490 & \stepset01111111 & N   & $\frac{\sqrt{6 }\mu C ^{3/2} }{3 \pi } 
 \frac{(2 C ) ^n}{n^{2}}$ \\
15 & A151379 & \stepset10010100 & N   & $\begin{cases} \frac{4 \sqrt{2 }}{\pi } 
 \frac{(2 \sqrt{2 }) ^n}{n^{2}} & (n=2p) \\ 0 & (n=2p+1) \end{cases}$ \\
16 & A148934 & \stepset10110110 & N   & $\frac{\sqrt{2 }A ^{3/2} }{\pi } 
 \frac{(2 A ) ^n}{n^{2}}$ \\
17 & A151497 & \stepset10010010 & N   & $\frac{27 }{8 } \sqrt{\frac{3 }{\pi }} 
 \frac{3 ^n}{n^{5/2}}$ \\
18 & A151483 & \stepset10111011 & Y   & $\frac{27 }{8 } \sqrt{\frac{3 }{\pi }} 
 \frac{6 ^n}{n^{5/2}}$ \\
19 & A005817 & \stepset00110011 & N   & $\frac{32 }{\pi } 
 \frac{4 ^n}{n^{3}}$ \\
\hline
\end{tabular}
}
\caption{Nature of the generating series for $(x,y)=(1,0)$ and coefficient
  asymptotics}
\centering\small $A=1+\sqrt2$, \ $B=1+\sqrt3$, \ $C=1+\sqrt6$, \ $\lambda=7+3\sqrt6$, \ $\mu=\sqrt{\frac{4\sqrt6-1}{19}}$
\label{tab:kappa-in-F10t}
\end{table}

\begin{table}
\centerline{%
\begin{tabular}{|c|cccc|}
\hline
     & OEIS    & $\cS$            & alg & equiv
\\
\hline
 1 & A005566 & \stepset10101010 & N   & $\frac{4 }{\pi } 
 \frac{4 ^n}{n}$ \\
 2 & A018224 & \stepset01010101 & N   & $\frac{2 }{\pi } 
 \frac{4 ^n}{n}$ \\
 3 & A151312 & \stepset11011101 & N   & $\frac{\sqrt{6 }}{\pi } 
 \frac{6 ^n}{n}$ \\
 4 & A151331 & \stepset11111111 & N   & $\frac{8 }{3 \pi } 
 \frac{8 ^n}{n}$ \\
 5 & A151266 & \stepset01001001 & N   & $\frac1{2 } \sqrt{\frac{3 }{\pi }} 
 \frac{3 ^n}{n^{1/2}}$ \\
 6 & A151307 & \stepset01101011 & N   & $\frac1{4 } \sqrt{\frac{10 }{\pi }} 
 \frac{5 ^n}{n^{1/2}}$ \\
 7 & A151291 & \stepset11001001 & N   & $\frac{4 }{3 \sqrt{\pi }} 
 \frac{4 ^n}{n^{1/2}}$ \\
 8 & A151326 & \stepset11101011 & N   & $\frac{2 }{3 } \sqrt{\frac{3 }{\pi }} 
 \frac{6 ^n}{n^{1/2}}$ \\
 9 & A151302 & \stepset11010101 & N   & $\frac1{6 } \sqrt{\frac{10 }{\pi }} 
 \frac{5 ^n}{n^{1/2}}$ \\
10 & A151329 & \stepset11110111 & N   & $\frac1{9 } \sqrt{\frac{21 }{\pi }} 
 \frac{7 ^n}{n^{1/2}}$ \\
11 & A151261 & \stepset10011100 & N   & $\begin{cases} \frac{12 \sqrt{3 }}{\pi } 
 \frac{(2 \sqrt{3 }) ^n}{n^{2}} & (n=2p) \\ \frac{18 }{\pi } 
 \frac{(2 \sqrt{3 }) ^n}{n^{2}} & (n=2p+1) \end{cases}$ \\
12 & A151297 & \stepset10111110 & N   & $\frac{\sqrt{3 }B ^{7/2} }{2 \pi } 
 \frac{(2 B ) ^n}{n^{2}}$ \\
13 & A151275 & \stepset01011101 & N   & $\begin{cases} \frac{12 \sqrt{30 }}{\pi } 
 \frac{(2 \sqrt{6 }) ^n}{n^{2}} & (n=2p) \\ \frac{144 \sqrt{5 }}{5 \pi } 
 \frac{(2 \sqrt{6 }) ^n}{n^{2}} & (n=2p+1) \end{cases}$ \\
14 & A151314 & \stepset01111111 & N   & $\frac{\sqrt{6 }\lambda \mu C ^{5/2} }{5 \pi } 
 \frac{(2 C ) ^n}{n^{2}}$ \\
15 & A151255 & \stepset10010100 & N   & $\begin{cases} \frac{24 \sqrt{2 }}{\pi } 
 \frac{(2 \sqrt{2 }) ^n}{n^{2}} & (n=4p) \\ \frac{32 }{\pi } 
 \frac{(2 \sqrt{2 }) ^n}{n^{2}} & (n=4p+1) \\ \frac{24 \sqrt{2 }}{\pi } 
 \frac{(2 \sqrt{2 }) ^n}{n^{2}} & (n=4p+2) \\ \frac{32 }{\pi } 
 \frac{(2 \sqrt{2 }) ^n}{n^{2}} & (n=4p+3) \end{cases}$ \\
16 & A151287 & \stepset10110110 & N   & $\frac{2 \sqrt{2 }A ^{7/2} }{\pi } 
 \frac{(2 A ) ^n}{n^{2}}$ \\
17 & A001006 & \stepset10010010 & Y   & $\frac{3 }{2 } \sqrt{\frac{3 }{\pi }} 
 \frac{3 ^n}{n^{3/2}}$ \\
18 & A129400 & \stepset10111011 & Y   & $\frac{3 }{2 } \sqrt{\frac{3 }{\pi }} 
 \frac{6 ^n}{n^{3/2}}$ \\
19 & A005558 & \stepset00110011 & N   & $\frac{8 }{\pi } 
 \frac{4 ^n}{n^{2}}$ \\
\hline
\end{tabular}
}
\caption{Nature of the generating series for $(x,y)=(1,1)$ and coefficient
  asymptotics}
\centering\small $A=1+\sqrt2$, \ $B=1+\sqrt3$, \ $C=1+\sqrt6$, \ $\lambda=7+3\sqrt6$, \ $\mu=\sqrt{\frac{4\sqrt6-1}{19}}$
\label{tab:kappa-in-F11t}
\end{table}

\end{document}